\newtheorem{theorem}{Theorem}
\newtheorem{lemma}[theorem]{Lemma}
\newtheorem{corollary}[theorem]{Corollary} 
\newtheorem{proposition}[theorem]{Proposition}
\newtheorem{conjecture}[theorem]{Conjecture}
\theoremstyle{definition}
\newtheorem{definition}[theorem]{Definition}
\theoremstyle{remark}
\newtheorem{remark}[theorem]{Remark}
\numberwithin{theorem}{section}
\numberwithin{equation}{section}
\newcommand{\abs}[1]{\left\lvert#1\right\rvert}
\newcommand{\abss}[1]{\lvert#1\rvert}
\newcommand{\norm}[1]{\left\lVert#1\right\rVert}
\newcommand{\floor}[1]{\left\lfloor #1 \right\rfloor}
\newcommand{\ceil}[1]{\left\lceil #1 \right\rceil}
\newcommand{\paren}[1]{\left( #1 \right)}
\newcommand{\set}[1]{\left\{ #1 \right\}}
\newcommand{\wh}{\widehat}
\newcommand{\wt}{\widetilde}
\renewcommand{\epsilon}{\varepsilon}
\newcommand{\CC}{\mathbb{C}}
\newcommand{\EE}{\mathbb{E}}
\newcommand{\RR}{\mathbb{R}}
\newcommand{\PP}{\mathbb{P}}
\newcommand{\TT}{\mathbb{T}}
\newcommand{\ZZ}{\mathbb{Z}}
\newcommand{\bfa}{\mathbf{a}}
\newcommand{\C}{\mathbb C}
\newcommand{\E}{\mathbb E}
\newcommand{\R}{\mathbb R}
\newcommand{\T}{\mathbb T}
\newcommand{\Z}{\mathbb Z}
\title{Uniform sets with few progressions via colorings}
\author[Deng]{MingYang Deng}
\author[Tidor]{Jonathan Tidor}
\author[Zhao]{Yufei Zhao}
\thanks{Tidor was supported by a Stanford Science Fellowship. Zhao was supported by NSF CAREER Award DMS-2044606 and a Sloan Research Fellowship.}
\address{Deng, Zhao: Massachusetts Institute of Technology, Cambridge, MA, USA}
\email{\textnormal{\{}dengm,yufeiz\textnormal{\}}@mit.edu}
\address{Tidor: Stanford University, Stanford, CA, USA}
\email{jtidor@stanford.edu}
\begin{document}

\begin{abstract}
Ruzsa asked whether there exist Fourier-uniform subsets of $\mathbb Z/N\mathbb Z$ with density $\alpha$ and 4-term arithmetic progression (4-AP) density at most $\alpha^C$, for arbitrarily large $C$. Gowers constructed Fourier uniform sets with density $\alpha$ and 4-AP density at most $\alpha^{4+c}$ for some small constant $c>0$. We show that an affirmative answer to Ruzsa's question would follow from the existence of an $N^{o(1)}$-coloring of $[N]$ without symmetrically colored 4-APs. For a broad and natural class of constructions of Fourier-uniform subsets of $\mathbb Z/N\mathbb Z$, we show that Ruzsa's question is equivalent to our arithmetic Ramsey question.

We prove analogous results for all even-length APs. For each odd $k\geq 5$, we show that there exist $U^{k-2}$-uniform subsets of $\mathbb Z/N\mathbb Z$ with density $\alpha$ and $k$-AP density at most $\alpha^{c_k \log(1/\alpha)}$. We also prove generalizations to arbitrary one-dimensional patterns.
\end{abstract}

\maketitle

\section{Introduction}

A basic and central fact in additive combinatorics is that the density 3-term arithmetic progressions (3-APs) in a Fourier uniform subset of $\ZZ/N\ZZ$ of density $\alpha$ is close to $\alpha^3$ (the random estimate).
This fact plays a key role in the proof of Roth's theorem on the existence of 3-APs in dense subsets of the integers \cite{Rot53}. 
Also important is that Fourier uniformity does not control 4-AP counts---a now-standard construction gives Fourier-uniform sets of density $\alpha + o(1)$ and 4-AP density much higher than the random estimate of $\alpha^4$: $\{n \in \ZZ/N\ZZ : 0 \le n^2 \pmod N < \alpha N\}$. 
This construction was given by Gowers~\cite{Gow01} in his groundbreaking work giving a new proof of Szemer\'edi's theorem that led to the development of higher order Fourier analysis.
On the other hand, is there a Fourier uniform set with very few 4-APs? This is the question that we study in this paper.
Interestingly, the above construction does not easily suggest any modifications with too few 4-APs.

\subsection{Fourier uniform sets with few 4-APs}

We say that $A \subset \ZZ/N\ZZ$ is \emph{$\epsilon$-Fourier uniform} if $\abss{\wh{1_A}(r)} \le \epsilon$ for all $r \ne 0$. Here the Fourier transform is defined as, for $f \colon \ZZ/N\ZZ \to \CC$ and $r \in \ZZ/N\ZZ$,
\[
\wh{f} (r) \coloneqq \EE_{n \in \ZZ/N\ZZ} f(n) e^{-2\pi i rn/N}.
\]

In a finite abelian group $G$ (typically $\ZZ/N\ZZ$ for us), for any $k \ge 3$ and functions $f_1, \dots, f_k \colon G \to \CC$, define
\[
\Lambda_k(f_1, \dots, f_k) \coloneqq \EE_{n, d} f_1(n) f_2(n+d) \cdots f_k(n+(k-1)d).
\]
Also 
\[
\Lambda_k(f) \coloneqq \Lambda_k(f, f, \dots, f) = \EE_{n, d} f(n) f(n+d) \cdots f(n+(k-1)d).
\]
Given $A \subset \ZZ/N\ZZ$, we call $\Lambda_k(1_A)$ the \emph{$k$-AP density} of $A$. It is the number of $k$-APs in $A$ divided by $N^2$, where we count each $k$-AP twice (forward and backward) and each trivial $k$-AP (with common difference zero) once.

Gowers initially conjectured that a Fourier uniform subset of $\ZZ/N\ZZ$ of density $\alpha$ should have 4-AP density at least $\alpha^4 +o(1)$. 

\begin{conjecture}[{Gowers \cite[Conjecture 4.1]{Gow01}}]
\label{conj:gowers4ap}
For every $\epsilon > 0$, there exists $\eta > 0$ so that any $\eta$-Fourier uniform $A \subset \ZZ/N\ZZ$ with $\abs{A} \ge  \alpha N$ has $\Lambda_4(1_A) \ge \alpha^4 - \epsilon$. 
\end{conjecture}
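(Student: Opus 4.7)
The plan is to Fourier expand $\Lambda_4(1_A)$ around the random count and reduce the conjecture to a bound on $\Lambda_4(g)$, where $g \coloneqq 1_A - \alpha$ satisfies $\wh g(0) = 0$ and $\abss{\wh g(r)} \le \eta$ for $r \ne 0$. Expanding $1_A = \alpha + g$ inside $\Lambda_4$ by multilinearity yields
\[
\Lambda_4(1_A) = \alpha^4 + \sum_{\substack{S \subseteq \{1,2,3,4\} \\ \abss{S} \ge 1}} \alpha^{4 - \abss{S}}\, \Lambda_S(g),
\]
where $\Lambda_S(g) \coloneqq \EE_{n,d} \prod_{i \in S} g(n + (i-1)d)$. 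The $\abss{S} = 1$ terms vanish because $g$ has mean zero. The $\abss{S} = 2$ terms also vanish: after the change of variables $(x,y) = (n + (i-1)d, n + (j-1)d)$ (a bijection when $\gcd(j-i, N) = 1$), each such expectation factors as $(\EE g)^2 = 0$.

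For $\abss{S} = 3$, each $\Lambda_S(g)$ reduces to a one-variable Fourier sum. For instance $\Lambda_{\{1,2,3\}}(g) = \sum_r \wh g(r)^2\, \wh g(-2r)$, and by Plancherel
\[
\abss{\Lambda_{\{1,2,3\}}(g)} \le \max_r \abss{\wh g(r)} \cdot \sum_r \abss{\wh g(r)}^2 \le \eta \alpha,
\]
with the remaining triples admitting analogous one-variable bounds. These contribute $O(\alpha^2 \eta)$ to the total, so the conjecture reduces to showing $\abss{\Lambda_4(g)} = o_{\eta \to 0}(1)$.

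Parameterizing the lattice of 4-AP frequencies by $(r_1, r_2, r_3, r_4) = (s + 2t, -2s - 3t, s, t)$ gives
\[
\Lambda_4(g) = \sum_{s,t} \wh g(s + 2t)\, \wh g(-2s - 3t)\, \wh g(s)\, \wh g(t),
\]
and this is where the main obstacle sits. Both $s$ and $t$ range freely, so the naive bound of $\abss{\wh g} \le \eta$ on two factors yields $\eta^2 \bigl(\sum_r \abss{\wh g(r)}\bigr)^2$, which is useless since $\sum_r \abss{\wh g(r)}$ is typically of order $\sqrt{N}$. A natural alternative is iterated Cauchy--Schwarz in $s$ and $t$ in the spirit of the generalized von Neumann inequality, but any such step produces a bound in terms of the $U^3$-Gowers norm of $g$, which is genuinely stronger than the Fourier ($U^2$) control at our disposal. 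This, I expect, is not just a technical obstruction but a fundamental one---and, as the paper's introduction foreshadows, the conjecture is in fact false.
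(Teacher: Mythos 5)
This statement is a conjecture that the paper itself declares false: immediately after stating it, the paper recalls that Gowers disproved it in \cite{Gow20}, and the entire point of the paper is to construct sharper counterexamples. There is no proof in the paper to recover, and you correctly decline to supply one. Your analysis accurately locates the obstruction: the expansion $1_A = \alpha + g$, the vanishing (or $O(\eta^2)$ smallness when $\gcd(j-i,N)>1$) of the $\abss{S}\le 2$ terms, and the bound $\abss{\Lambda_S(g)}\le\eta\norms{g}_2^2\le\eta\alpha$ for $\abss{S}=3$ are all correct, and your parametrization of the $\Lambda_4(g)$ frequency lattice is right. The key observation---that bounding two of the four factors by $\eta$ leaves $\eta^2\paren{\sum_r\abss{\wh g(r)}}^2$ with $\sum_r\abss{\wh g(r)}$ potentially of order $\sqrt N$, and that any Cauchy--Schwarz refinement surfaces $U^3$ rather than $U^2$ control---is exactly the conceptual reason the conjecture fails. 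Gowers' quadratic constructions (and the refinements built in this paper) exploit precisely this: $g=1_A-\alpha$ can have small $U^2$ norm but large $U^3$ norm, so $\Lambda_4(g)$ need not be small. Your conclusion that the obstruction is fundamental and the statement is false is correct.
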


Gowers subsequently disproved his conjecture by constructing a counterexample with 4-AP density less than $\alpha^{4+c}$ for some absolute constant $c > 0$.

\begin{theorem}[{Gowers \cite[Theorem 6]{Gow20}}]
There exist some constant $c>0$ and sets $A_N \subset \ZZ/N\ZZ$ so that 
$\abs{A_N}/N \ge 1/2 + o(1)$, 
$A_N$ is $o(1)$-Fourier uniform, 
and $\Lambda_4(1_{A_N}) \le 2^{-4} - c$.
\end{theorem}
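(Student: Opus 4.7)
The plan is to construct $A_N$ as a threshold of an explicit $o(1)$-Fourier-uniform function $f_N \colon \ZZ/N\ZZ \to [0,1]$ with $\EE f_N = 1/2 + o(1)$ and $\Lambda_4(f_N) \le 2^{-4} - c$. A first instinct is to try $f(n) = F(\alpha n^2/N)$ for some $F \colon \TT \to [0,1]$ with $\int F = 1/2$; Fourier uniformity holds by Weyl's inequality for quadratic phases. However, the identity $\sum_{i=0}^3 (-1)^i \binom{3}{i} (n+id)^2 = 0$ forces, after Fourier-expanding $F$, that
\[
\Lambda_4(f) = \sum_{\lambda} |\wh F(\lambda)|^2 |\wh F(3\lambda)|^2 \ge |\wh F(0)|^4 = 2^{-4},
\]
so $\Lambda_4(f) < 2^{-4}$ is impossible for this template. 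The same lower bound holds whenever $f$ is a function of a polynomial in $n$ of degree at most $2$, so any construction achieving $\Lambda_4 < 2^{-4}$ must exceed the 2-step polynomial regime.

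To escape this barrier I would use a bracket polynomial (equivalently, a 2-step nilsequence) in place of the polynomial phase: for instance, take $\phi(n) = \{\alpha n\}\{\beta n\}$ with $\alpha, \beta$ chosen so that $(\alpha n, \beta n)$ equidistributes on $\TT^2$, and set $f(n) = F(\phi(n))$ for some $F \colon [0,1) \to [0,1]$ with $\int F = 1/2$. The key is that $\phi$ is only \emph{morally} quadratic: along a 4-AP, the fractional-part carries perturb the identity $\Delta^3 \phi = 0$, so the joint distribution of $(\phi(n+id))_{i=0}^3$ is supported on a different algebraic subvariety of $\TT^4$ than in the pure quadratic case, and one is no longer locked into a Parseval-type nonnegative sum. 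Integrating $F^{\otimes 4}$ against this modified subvariety can in principle be made strictly less than $(\int F)^4 = 2^{-4}$.

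The concrete steps are: (1) pin down explicit parameters $\alpha, \beta$ and function $F$; (2) verify $o(1)$-Fourier uniformity of $f$ via a Weyl-type bound for bilinear exponential sums involving $\lfloor\alpha n\rfloor \cdot \beta n$; (3) compute $\Lambda_4(f)$ by decomposing $(n,d)$ according to the carry pattern of $\lfloor \alpha(n+id)\rfloor$ and $\lfloor\beta(n+id)\rfloor$ across $i = 0, \ldots, 3$ and applying equidistribution on each piece; (4) tune $\alpha, \beta$ and $F$ so that the aggregate is at most $2^{-4} - c$, and round $f_N$ to a $\{0,1\}$-valued indicator of $A_N$ with a negligible loss.

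The hard part is (3)--(4): ruling out that the carry contributions cancel, or reassemble into another Parseval-type nonnegative quantity such as $\sum_\lambda |\wh F(\lambda)|^2 |\wh F(3\lambda)|^2$, which would revive the $\ge 2^{-4}$ bound. Overcoming this demands a genuinely bilinear ingredient of the bracket polynomial not captured by any single polynomial character; I would try to produce it by choosing $\alpha, \beta$ so that a specific carry pattern contributes asymmetrically along 4-APs, and by selecting $F$ whose Fourier coefficients interact with that pattern to yield a negative leading correction, verifiable by a direct enumeration of the finitely many carry types.
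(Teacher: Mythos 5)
Your barrier observation for the one-variable template $f(n)=F(n^2/N)$ is correct: the binomial identity forces the limiting count into the Parseval-type form $\sum_\lambda|\wh F(\lambda)|^2|\wh F(3\lambda)|^2\ge|\wh F(0)|^4=2^{-4}$. But the conclusion you draw from it --- that one must therefore leave the polynomial-phase regime and pass to bracket polynomials --- is wrong, and that is the genuine gap. Gowers' construction stays entirely with polynomial phases; the fix is simply to let the profile depend on \emph{both} the linear and the quadratic orbit, $f_N(n)=F(n/N,n^2/N)$ for a two-variable Riemann integrable $F\colon\TT^2\to[0,1]$ with constant first marginal $1/2$, which is exactly the family this paper formalizes in \cref{sec:torus-formalism} and whose limiting statistics are given in \cref{prop:conv}. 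In the Fourier expansion of the limit one then finds
\[
\wt\Lambda_4(F)=\sum_{\substack{r_1+r_2+r_3+r_4=0\\ r_2+2r_3+3r_4=0}}\ \sum_{\lambda}\wh F(r_1,\lambda)\,\wh F(r_2,-3\lambda)\,\wh F(r_3,3\lambda)\,\wh F(r_4,-\lambda),
\]
and the extra frequencies $r_j$ coupled to the 4-AP constraint on the $x$-coordinates mean the sum is no longer term-by-term a square. The diagonal contribution $(r_1,\dots,r_4)=(0,0,0,0)$ still yields $\sum_\lambda|\wh F(0,\lambda)|^2|\wh F(0,3\lambda)|^2\ge 2^{-4}$, but the off-diagonal terms in $r$ can be, and in Gowers' choice are, strictly negative, pushing the total below $2^{-4}$.

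Concretely, as recorded in the footnote of \cref{sec:torus-formalism}, Gowers takes $F(x,y)=\tfrac12+\tfrac18\bigl(e^{6\pi iy}+e^{2\pi iy}+e^{-2\pi iy}+e^{-6\pi iy}\bigr)f(x)$ for a suitable explicit $f\colon\TT\to\{-1,0,1\}$ with mean zero; the Fourier support of $F$ is finite, so $\wt\Lambda_4(F)<2^{-4}$ can be verified by a direct finite computation, and passing to actual sets $A_N$ is then the standard transfer of \cref{prop:conv} plus sampling. Your bracket-polynomial route is not necessarily unworkable, but it is strictly harder --- one must control equidistribution and carry combinatorics of a genuine two-step nilsequence, precisely the difficulties you flag in your steps (3)--(4) --- and it is not needed: the claimed obstruction already fails within polynomial phases as soon as $F$ is allowed a linear coordinate alongside the quadratic one.
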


It will be convenient to introduce the following definition.

\begin{definition}[Minimum 4-AP density in a Fourier uniform set]
For any $0 < \alpha < 1$, 
let $\rho_4(\alpha)$ be the largest real number so that for any $\epsilon > 0$ there exist $\eta > 0 $ and $N_0$ so that every $\eta$-Fourier uniform $A \subset \ZZ/N\ZZ$ with $N \ge N_0$ and $\abs{A} \ge (\alpha-\epsilon) N$ has $\Lambda_4(1_A) \ge \rho_4(\alpha) - \epsilon$.
\end{definition}

Gowers' construction~\cite{Gow20} shows that $\rho_4(1/2)\leq 2^{-4}-2^{-30}<(1/2)^{4+c}$ for $c\approx2\times 10^{-8}$. 
Via a tensor power trick, this implies $\rho_4(\alpha)<\alpha^{4+c}$ for all sufficiently small $\alpha$. Wolf~\cite{Wol10} modified this construction to obtain $\rho_4(1/2)\leq 2^{-4}-\tfrac{2\times 36}{9(5\times 18)^22^{12}}<(1/2)^{4+c}$ for $c\approx5\times 10^{-6}$. Again this implies that $\rho_4(\alpha)<\alpha^{4+c}$ for all sufficiently small $\alpha$. In \cref{cor:quant-bounds-4AP} below of our main results, we improve these bounds to $\rho_4(\alpha)<\alpha^{4.4}$ for all sufficiently small $\alpha > 0$.

Ruzsa~\cite[Problem 3.2]{CL07} asked whether $\rho_4(\alpha)$ goes to zero faster than any polynomial in $\alpha$. 
We believe that the answer should be yes, and we state it as a conjecture.
One reason to believe this conjecture is that it would be consistent with the intuition that Fourier uniformity does not control 4-AP counts.

\begin{conjecture}[Ruzsa's question] \label{conj:unif4ap}
For every $C >0$, one has $\rho_4(\alpha) < \alpha^C$ for all sufficiently small $\alpha$.
\end{conjecture}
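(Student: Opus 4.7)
The plan is to follow the strategy hinted at in the abstract: reduce \cref{conj:unif4ap} to an arithmetic Ramsey-type question about colorings of $[N]$, and then attempt to resolve that coloring question.

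\textbf{Step 1: setup.} For each large integer $K$, suppose one can exhibit a coloring $c \colon \ZZ/N\ZZ \to [K]$ (with $N = N(K)$ sufficiently large) satisfying the following:
\begin{enumerate}[(i)]
\item $K$ may be taken as large as $N^{o(1)}$;
\item every level set $c^{-1}(k)$ has density $(1+o(1))/K$ and is $o(1)$-Fourier uniform;
\item no nontrivial 4-AP $(n, n+d, n+2d, n+3d)$ with $d \ne 0$ is \emph{symmetrically colored}, meaning that $c(n)=c(n+3d)$ and $c(n+d)=c(n+2d)$.
\end{enumerate}

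\textbf{Step 2: deduction.} Given such a coloring, I claim \cref{conj:unif4ap} follows. For any small $\alpha > 0$, set $K = \lfloor 1/\alpha\rfloor$ and take $A = c^{-1}(k_0)$ for any fixed color $k_0$. Then $\abs{A}/N = (1+o(1))/K \ge \alpha - o(1)$, and $A$ is $o(1)$-Fourier uniform by (ii). A nontrivial 4-AP entirely within $A$ is necessarily monochromatic, hence in particular symmetrically colored; (iii) rules this out, so $\Lambda_4(1_A) = O(\alpha/N)$, coming only from the trivial $d=0$ contribution. Since $\alpha = 1/K = N^{-o(1)}$, we have $\alpha/N \ll \alpha^C$ for every fixed $C > 0$ and all sufficiently large $N$, verifying $\rho_4(\alpha) < \alpha^C$ as desired.

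\textbf{Step 3: the Ramsey problem.} It remains to construct colorings satisfying (i)--(iii). Condition (ii) can be arranged using structured algebraic colorings such as $c(n) = \lfloor K\{P(n)/N\}\rfloor$ for a polynomial $P$, whose level sets are equidistributed and Fourier uniform by Weyl's inequality. Condition (iii) is the genuinely hard requirement: a uniformly random $K$-coloring has $\Theta(N^2/K^2)$ symmetrically colored 4-APs in expectation, which is $\gg 1$ whenever $K = N^{o(1)}$, so any successful coloring must be highly structured. A natural attempt is to combine several polynomial phases, e.g., $c(n) = (f_1(n) \bmod K_1, \dots, f_s(n) \bmod K_s)$ with polynomials $f_i$ of increasing degree, translating the palindromic constraint into a polynomial system in $(n,d)$ that one then arranges to be only trivially solvable.

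\textbf{Main obstacle.} The principal obstacle is Step 3, specifically achieving (iii) using only $N^{o(1)}$ colors. This is a purely arithmetic Ramsey problem that appears open. Note that it is strictly stronger than merely avoiding monochromatic 4-APs---which needs only $O(\log\log N)$ colors via Szemer\'edi's theorem---because the palindromic condition constrains the outer pair and inner pair of colors jointly rather than forcing all four to agree. A successful construction will likely combine algebraic structure (to encode the palindromic constraint as a tractable polynomial system) with a probabilistic alteration argument (for instance via the Lov\'asz local lemma) to remove residual symmetric patterns; I expect this to be the technical heart of the problem, and it is precisely the arithmetic Ramsey-type question to which the paper reduces \cref{conj:unif4ap}.
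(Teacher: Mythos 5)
The statement you are asked to prove, Ruzsa's question (\cref{conj:unif4ap}), is a conjecture: the paper does not prove it. What the paper establishes (\cref{thm:even-color-non-quan}) is that the coloring conjecture (\cref{conj:sc4ap}) would imply it, and the authors say explicitly that they were unable to resolve \cref{conj:sc4ap}. Your closing admission that Step~3 is the technical heart and appears open is therefore inevitable; what must be checked is whether your reduction in Steps~1--2 is sound.

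It is not. In Step~2 you fix the color count at $K=\lfloor 1/\alpha\rfloor$ and then want the coloring $c\colon\ZZ/N\ZZ\to[K]$ to avoid symmetrically colored 4-APs for arbitrarily large $N$. For fixed $K$, van der Waerden's theorem forces a monochromatic -- hence in particular symmetrically colored -- 4-AP once $N$ is large, so condition (iii) cannot hold for $K$ fixed while $N\to\infty$. But $N\to\infty$ at fixed $\alpha$ is exactly what the definition of $\rho_4(\alpha)$ requires: one must exhibit uniform low-$\Lambda_4$ sets in $\ZZ/N\ZZ$ for all large $N$. Moreover, your deduced bound $\Lambda_4(1_A)=O(\alpha/N)$ would force $\rho_4(\alpha)=0$, contradicting the lower bound $\rho_4(\alpha)\ge\exp(-\alpha^{-\Omega(1)})$ recorded in the remark following \cref{conj:unif4ap}. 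The $N^{o(1)}$ in \cref{conj:sc4ap} is not a harmless formalism including $K$ constant; the color count genuinely has to grow with $N$, which means the scale of the coloring cannot coincide with the ambient scale of the Fourier-uniform set.

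The paper avoids this by decoupling the two scales. A coloring of a bounded cyclic group avoiding symmetric 4-APs, amplified by the tensor-power trick (\cref{lem:power}), is used to build a fixed Jordan measurable $A\subset\TT^2$ whose indicator has constant first marginal $\alpha$ and small $\wt\Lambda_4(1_A)$ (\cref{lem:disc-to-cont-k-ap}, \cref{lem:gen-k-ap-construction}). Then the sampling $f_{N'}(n)=1_A(n/N',\,n^2/N')$ together with the Weyl-equidistribution asymptotics of \cref{prop:conv} shows that $f_{N'}$ is $U^2$-uniform and $\Lambda_4(f_{N'})\to\wt\Lambda_4(1_A)$ as $N'\to\infty$; this is what yields \cref{prop:set-torus-bound}, i.e.\ $\rho_4(\alpha)\le\wt\rho_4(\alpha)$. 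Here the ambient $N'$ is entirely independent of $\alpha$ and of the coloring's parameters. Your level-set construction has no analogue of this blowup, so Step~2 cannot be repaired without some such device.
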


\begin{remark}
Current upper bounds~\cite{GT17} on the maximum size of a 4-AP-free subset of $[N]$ (i.e., Szemer\'edi's theorem for 4-APs) imply $\rho_4(\alpha) \ge \exp(-\alpha^{-\Omega(1)})$, even if we forget the uniformity requirement in the definition of $\rho_4$. We do not know any better lower bounds.
Conversely, each lower bound on $\rho_4(\alpha)$, together with the standard density increment proof strategy of Roth's theorem, would imply some upper bound on Szemer\'edi's theorem for 4-APs.
\end{remark} 

We pose a conjecture in arithmetic Ramsey theory and show that it would imply \cref{conj:unif4ap}.
Here a $4$-AP $n, n+d, n+2d, n+3d$ with $d \ne 0$ is said to be \emph{symmetrically colored} if $n$ and $n+3d$ have the same color, and $n+d$ and $n+2d$ have the same color.

\begin{conjecture}[Avoiding symmetrically colored 4-APs] \label{conj:sc4ap}
For all $N$, 
there exists an $N^{o(1)}$-coloring of $[N]$ without symmetrically colored 4-APs. 
\end{conjecture}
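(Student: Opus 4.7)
The plan is to generalize Behrend's 3-AP-free construction by replacing its classical quadratic digit invariant with a \emph{cubic} one. The rationale is that cubic is the smallest degree at which the two symmetry conditions defining a symmetrically colored 4-AP become linearly independent rather than equivalent. Specifically, write each $n\in[N]$ in a base $B$ as a digit vector $\phi(n)=(\phi_0(n),\dots,\phi_{L-1}(n))\in\{0,\dots,B-1\}^L$ with $L=\lceil \log_B N\rceil$, and propose the coloring
\[
c(n)\coloneqq \sum_{i=0}^{L-1}\phi_i(n)^3\pmod{M}
\]
for a modulus $M$ coprime to $6$ and strictly larger than $6L(B-1)^3$.

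The heart of the approach is a short computation in the \emph{no-carry regime}, i.e., whenever $\phi(n+kd)=\phi(n)+k\phi(d)$ componentwise for $k\in\{1,2,3\}$. Writing $S_{ab}=\sum_i \phi_i(n)^a\phi_i(d)^b$, direct expansion of $(a+3b)^3-a^3$ and $(a+2b)^3-(a+b)^3$ yields
\begin{align*}
c(n+3d)-c(n) &\equiv 9S_{21}+27S_{12}+27S_{03},\\
c(n+2d)-c(n+d) &\equiv 3S_{21}+9S_{12}+7S_{03}\pmod{M}.
\end{align*}
Subtracting three times the second line from the first gives $6S_{03}\equiv 0\pmod M$, and the choice of $M$ forces $S_{03}=\sum_i\phi_i(d)^3=0$, so $\phi(d)=0$ and hence $d=0$. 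Thus the cubic coloring rules out every symmetrically colored 4-AP in the no-carry regime, using $M=O(LB^3)$ colors; taking $B=e^{\sqrt{\log N}}$ and $L\sim\sqrt{\log N}$ gives $M=e^{O(\sqrt{\log N})}=N^{o(1)}$, matching the target. The classical Behrend invariant $\sum\phi_i(n)^2$ does \emph{not} suffice, because at degree two the analogous expansions are proportional (both reduce to $d(2n+3d)\equiv 0$), so the two symmetry conditions become equivalent rather than jointly restrictive on $(n,d)$.

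The main obstacle is \emph{carrying}: the key identity $\phi(n+kd)=\phi(n)+k\phi(d)$ fails whenever some digit sum exceeds $B-1$, and most 4-APs in $[N]$ involve many such carries. I would try to absorb this by augmenting the color with a second coordinate recording the carry pattern of $n$; using a balanced digit representation $\{-\lfloor B/2\rfloor,\dots,\lfloor B/2\rfloor\}$ and a suitably large base keeps the typical carry pattern sparse, so in principle this auxiliary data costs only $N^{o(1)}$ extra colors. A complementary line is to tensor the digit coloring with the purely algebraic coloring $n\mapsto n^3\bmod p$ for a short chain of primes $p$ coprime to $6$; an analogous subtraction shows each such component kills all symmetric 4-APs whose common difference is not divisible by $p$, leaving only APs with highly divisible $d$ for the digit layer to eliminate. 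Either way, the hard part is the same: one must bound, uniformly over $(n,d)$, how the carry patterns in $n,n+d,n+2d,n+3d$ correlate with the digits of $n$ and $d$ tightly enough that the bookkeeping adds only sub-polynomially many colors. I expect this correlation control to be the real combinatorial heart of the conjecture.
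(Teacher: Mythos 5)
The statement you are attempting to prove is stated in the paper as an \emph{open conjecture}, not a theorem. The authors explicitly write ``We were unable to resolve \cref{conj:sc4ap} despite some effort,'' and remark that the difficulty seems to lie exactly where you found it: Behrend-type digit constructions are hard to turn into colorings with the required cross-color constraint. So there is no proof in the paper for you to be compared against, and your proposal, being honestly incomplete, does not resolve the conjecture either.

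That said, the algebraic kernel of your proposal is a correct and pleasant observation. In the no-carry regime the two symmetry constraints give the linear combinations $9S_{21}+27S_{12}+27S_{03}\equiv 0$ and $3S_{21}+9S_{12}+7S_{03}\equiv 0$, whose difference (the first minus three times the second) is $6S_{03}\equiv 0$, so a modulus coprime to $6$ and exceeding the range of $S_{03}$ forces $\sum_i\phi_i(d)^3=0$, hence $d=0$ for nonnegative digits. You are also right that the quadratic digit invariant fails because the two analogous expansions are proportional. This is a real structural distinction between $3$-APs and symmetric $4$-APs and is worth recording.

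The genuine gap is the carry handling, and the difficulties are more serious than the proposal suggests. First, there is an internal tension: switching to a balanced digit alphabet to suppress carries destroys the step $S_{03}=0\Rightarrow d=0$, since odd powers of negative digits can cancel positive ones. Second, ``recording the carry pattern of $n$'' in the color of $n$ does not determine the carry patterns of $n+d$, $n+2d$, $n+3d$, because those depend on $d$, which the coloring cannot see; the constraint that must be verified involves four integers jointly, and a per-integer carry annotation carries no information about the joint carry behavior. (This is precisely why Behrend's construction works for a single dense carry-free \emph{subset} but does not obviously lift to a coloring of all of $[N]$, where every $(n,d)$, including the heavily carrying ones, must be handled.) Third, the algebraic layer $n\mapsto n^3\bmod p$ only kills symmetric $4$-APs with $p\nmid d$; tensoring over primes $p_1,\dots,p_t$ handles all $d<p_1\cdots p_t$ but uses $p_1\cdots p_t$ colors, so covering $d$ up to $N/3$ already costs more than $N$ colors, and the ``highly divisible $d$'' that remain are not obviously easier for the digit layer since divisibility by $p_1\cdots p_t$ need not align with base-$B$ digit structure. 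In short, you have isolated a promising algebraic mechanism and correctly identified the carry problem as the obstruction, but no argument in the proposal bounds the carry interaction, and the paper leaves exactly this question open.
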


\begin{theorem}
\label{thm:even-color-non-quan}
\cref{conj:sc4ap} implies \cref{conj:unif4ap}.
\end{theorem}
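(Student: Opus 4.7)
The plan is to use the hypothesized $N^{o(1)}$-coloring $c: [N] \to [K]$ avoiding symmetrically colored 4-APs to construct, for any $C>0$, Fourier-uniform subsets of a cyclic group with density $\alpha$ and 4-AP density below $\alpha^C$. My proposed construction is a \emph{colored lift}: take an auxiliary cyclic group $H = \ZZ/M\ZZ$ with $M$ coprime to $N$, a Fourier-uniform base set $B \subset H$ with $|B|/|H| = \alpha$, and for each color $i \in [K]$ an independent uniformly random shift $s_i \in H$. Define
\[
A = \{(n,h) \in \ZZ/N\ZZ \times \ZZ/M\ZZ : h - s_{c(n)} \in B\},
\]
which can be viewed as a subset of $\ZZ/NM\ZZ$ via the Chinese remainder isomorphism. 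The density of $A$ equals $\alpha$ by fibering over $n$, and Fourier uniformity follows from a second-moment computation in the random shifts combined with Fourier uniformity of $B$.

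The heart of the argument is the 4-AP count. A 4-AP $((n+ja, h+jb))_{j=0}^{3}$ lies in $A$ iff the four points $y_j := h+jb - s_{c(n+ja)}$ all lie in $B$. The tuple $(y_j)$ is itself an arithmetic progression in $H$ precisely when $(s_{c(n+ja)})_{j=0}^{3}$ is. A direct case analysis of the color pattern $(c_0,c_1,c_2,c_3)$ shows that, for a generic injective choice of shifts, the only patterns forcing the shifts into a 4-AP in $H$ are the symmetric ones $(a,a,a,a)$ and $(a,b,b,a)$---exactly those excluded by the coloring. Therefore, almost every 4-AP in $A$ corresponds to a \emph{non-AP} 4-point configuration in $B^4$, whose count is controlled by Fourier uniformity of $B$ via the generalized von Neumann theorem for complexity-one forms, contributing a term of order $\alpha^4$ to $\Lambda_4(1_A)$.

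To drive $\Lambda_4(1_A)$ down to $\alpha^C$ for arbitrary $C$, I would iterate the construction: at each step, the base set $B$ is taken to be the output of the previous iteration, so that the small fraction of 4-APs in $A$ corresponding to genuine APs in $H$ inherit the reduced 4-AP density of the previous stage. After $O(C)$ iterations, the compounded reduction yields $\Lambda_4(1_A) < \alpha^C$, while density $\alpha$ and Fourier uniformity are preserved throughout by freshly resampled shifts at each level.

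The main obstacle I anticipate is establishing a genuine strict improvement of the 4-AP density at each iteration---beyond the trivial $\alpha^4$ contribution arising from generic non-AP configurations---and tracking how this improvement compounds, without the ambient group size, Fourier uniformity, or density of $A$ degrading unfavorably. Quantifying the per-step gain and the number of iterations needed to reach $\alpha^C$ appears to be the technical heart of the argument.
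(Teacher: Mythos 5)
Your construction does not achieve the required gain, and the obstacle is not merely technical but conceptual.

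The first concrete error is the appeal to the generalized von Neumann theorem ``for complexity-one forms.'' For a fixed tuple of shifts $(t_0,t_1,t_2,t_3)$, the quantity $\mathbb{E}_{h,b}\prod_{j=0}^{3}1_B(h+jb-t_j)$ is a count of $4$-term progressions in $B$ with a fixed additive offset; the underlying linear system $(h, h+b, h+2b, h+3b)$ has Cauchy--Schwarz complexity $2$, not $1$ (any two of the four forms span the third), and shifting by constants does not change the complexity. Fourier uniformity of $B$ therefore provides no control whatsoever on these averages---this is in fact the very phenomenon the whole problem is about.

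More fundamentally, even if Fourier uniformity did control these counts, your own estimate yields $\Lambda_4(1_A)\approx\alpha^4$, which is the trivial random count, not an improvement. The conjecture requires $\Lambda_4(1_A)<\alpha^C$ for arbitrarily large $C$, i.e., dramatically \emph{below} $\alpha^4$. Averaging over your random shifts, every non-symmetric color pattern contributes exactly $\alpha^4$ in expectation (rainbow patterns give four independent translates; repeated-color patterns reduce, after a substitution, to lower-complexity counts that again give $\alpha^4$), so the floor of the construction is $\alpha^4$. The iteration idea does not escape this: removing the (already absent) ``genuine AP'' contribution does nothing to the $\alpha^4$ coming from the non-AP configurations, which reappears at every level regardless of what $B$ is.

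What is missing is the quadratic structure that makes the problem nontrivial. The paper's construction lives on $\mathbb{T}^2$ with the second coordinate tracking the quadratic phase $n^2/N$, so that, after Weyl equidistribution, the $4$-AP count converges to an integral over tuples whose second coordinates satisfy the binomial constraint $y_1-3y_2+3y_3-y_4=0$ (the signature of the identity $a^2-3(a+d)^2+3(a+2d)^2-(a+3d)^2=0$), while the $U^2$ norm is blind to this constraint. The set is taken to be a union of short intervals in the $y$-direction, positioned by a Behrend-type set with no non-trivial solutions of $s_1-3s_2+3s_3-s_4\equiv0$, and colored in the $x$-direction by (an adaptation of) the hypothesized coloring. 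The binomial constraint then forces the $x$-coordinates to form a symmetrically colored $4$-AP of the coloring, which has tiny measure, yielding $\wt\Lambda_4(F)\lesssim\alpha^{3+(1/2)\log_r N}$ and hence the desired super-polynomial decay via the tensor-power construction of \cref{lem:power}. Your ``colored lift'' over a direct product $\mathbb{Z}/N\mathbb{Z}\times\mathbb{Z}/M\mathbb{Z}$ with random shifts has no analogue of the quadratic phase, and so cannot exploit the binomial constraint that is the mechanism for going below $\alpha^4$.
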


We were unable to resolve \cref{conj:sc4ap} despite some effort.
Here is why we think \cref{conj:sc4ap} might be hard. In a coloring of $[N]$ without symmetrically colored 4-APs, each color class must be 4-AP-free. Essentially all known constructions of large 4-AP-free sets are related to the famous Behrend construction~\cite{Beh32}, coming from high-dimensional convex bodies. However, it seems difficult to avoid symmetrically colored 4-APs by using Behrend-like sets as color classes.

We prove the following quantitative version of \cref{thm:even-color-non-quan}.

\begin{theorem} \label{thm:even-color-quan}
Suppose there exists an $r$-coloring of $\ZZ/N\ZZ$ avoiding symmetrically colored 4-APs.
Then there exists a constant $C$ (depending on $r, N$), such that for all $0< \alpha < 1/2$,
\[
	\rho_4(\alpha) \le C \alpha^{3 + (1/2) \log_r N}.
\]
\end{theorem}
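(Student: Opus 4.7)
The plan is to construct, for each $0<\alpha<1/2$, a Fourier uniform set $A$ of density $\alpha$ with $\Lambda_4(1_A)\le C\alpha^{3+(1/2)\log_r N}$, by first producing a base construction at some fixed density $\alpha_0$ that encodes the coloring $c$, and then extending to all densities via the tensor power trick.

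The base construction is of polynomial type: take a polynomial map $\vec{\phi}\colon \mathbb{Z}/M\mathbb{Z}\to (\mathbb{Z}/N\mathbb{Z})^d$ built from degree-$2$ polynomials (for instance a tuple of shifted squarings $\vec{\phi}(n) = ((n+v_j)^2 \bmod N)_{j=1}^{d}$) for a large prime $M$ coprime to $6N$, and set $A_0=\{n\in\mathbb{Z}/M\mathbb{Z}:\vec{\phi}(n)\in S\}$ for an appropriate $S\subseteq (\mathbb{Z}/N\mathbb{Z})^d$ designed from $c$. Weyl equidistribution automatically guarantees that $A_0$ is $o(1)$-Fourier uniform as $M\to\infty$, with density $|S|/N^d+o(1)$. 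A Fourier expansion of $\Lambda_4(1_{A_0})$, using that $(1,-3,3,-1)$ spans the kernel of the 4-AP evaluation functional on degree-$2$ polynomials (i.e.\ the vanishing of $\sum_k t_k\,(n+kd)^2$ identically in $(n,d)$ forces $(t_0,t_1,t_2,t_3)$ to be a scalar multiple of $(-1,3,-3,1)$), reduces $\Lambda_4(1_{A_0})$ to a multilinear Fourier-side sum in $\hat{1_S}$.

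I would then choose $S=\bigsqcup_{\vec{i}}T_{\vec{i}}$ as a union of color-indexed pieces, each $T_{\vec{i}}$ supported on a specific translate of the color-class product indexed by $\vec{i}=(i_1,\dots,i_d)$. The translates and the map $\vec{\phi}$ are arranged so that the kernel condition coming from $(1,-3,3,-1)$, applied to a contributing $4$-tuple, simultaneously forces (i) the color indices $\vec{i}$ into the symmetric pattern associated with a symmetric 4-AP and (ii) the residues $(s_1,s_2,s_3,s_4)$ to form an honest 4-AP in $\mathbb{Z}/N\mathbb{Z}$; once both conditions are forced, the symmetric-4-AP avoidance of $c$ kills the contribution. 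Tracking piece sizes, color-class densities, and interval placements should yield a base bound of the form $\Lambda_4(1_{A_0})\le C\alpha_0^{3+(1/2)\log_r N}$, after which the $k$-fold tensor product $A_0^{\otimes k}\subseteq (\mathbb{Z}/M\mathbb{Z})^k$ multiplies density and 4-AP count and preserves Fourier uniformity. Standard interpolation over integer $k$ then gives the claimed estimate for every $0<\alpha<1/2$.

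The main obstacle is the combinatorial engineering of $\vec{\phi}$ and the color-fragment decomposition so that the Fourier-side bound on $\Lambda_4(1_{A_0})$ really does collapse to a count of symmetric 4-APs in $c$. The univariate choice $\vec{\phi}(n)=n^2$ alone produces a Fourier expansion $\sum_t |\hat{1_S}(t)|^2|\hat{1_S}(3t)|^2$ whose main-diagonal term at $t=0$ is at least $\alpha^4$ and cannot be suppressed, so a genuinely multivariate construction is needed to let the coloring enter both the diagonal and off-diagonal terms. Pinning the resulting exponent down to exactly $3+(1/2)\log_r N$ will require tight counting of the number of solutions to the kernel condition across color patterns and careful use of Cauchy--Schwarz to handle residual color configurations, and is the principal technical step.
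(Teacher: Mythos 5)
Your proposal has the right conceptual skeleton---you correctly identify that the constraint $\sum_j t_j (n+jd)^2 \equiv 0$ pins down $(t_1,t_2,t_3,t_4)$ to a multiple of $(1,-3,3,-1)$, and that the coloring's avoidance of symmetrically colored $4$-APs is the engine that must kill the count. But the plan as stated has two genuine gaps, one structural and one quantitative, and the ``main obstacle'' you flag is not merely technical bookkeeping.

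First, the tensor power step as written does not close. You propose to take $A_0^{\otimes k} \subseteq (\ZZ/M\ZZ)^k$ to interpolate densities, but $\rho_4$ is defined for cyclic groups $\ZZ/N\ZZ$, and $(\ZZ/M\ZZ)^k$ is not cyclic for $k \ge 2$. You cannot simply pass back via CRT or a random linear projection without losing control of both the Fourier uniformity and the $4$-AP count. The paper circumvents this by tensoring the \emph{coloring} rather than the set: an $r$-coloring of $\ZZ/N\ZZ$ avoiding symmetric $4$-APs yields, by base-$N$ digit coloring, an $r^\ell$-coloring of the genuinely cyclic group $\ZZ/N^\ell\ZZ$ with the same property. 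The density parameter $\alpha$ then enters through a single one-parameter family of constructions $f_N(n) = F(n/N, n^2/N)$ on $\ZZ/N\ZZ$ for $F \colon \TT^2 \to [0,1]$, so every construction is cyclic from the start.

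Second, the decomposition you sketch conflates two roles that the paper keeps separate and that really do need to be separated to get the exponent. In the paper's construction $F(x,y) = 1_{y \in J_{\Phi(x)}}$, the coloring $\Phi$ lives on the \emph{linear} coordinate $x = n/N$, while the quadratic coordinate $y = n^2/N$ is used only to record which interval $J_j$ the point falls in; the intervals are placed at $s_j/m$ for $s_1, \dots, s_r$ forming a set $S \subset \ZZ/m\ZZ$ with \emph{no non-trivial solutions} to $a - 3b + 3c - d = 0$. The $4$-binomial constraint $y_1 - 3y_2 + 3y_3 - y_4 = 0$ on a contributing $4$-tuple then forces $s_{\Phi(x_1)} - 3 s_{\Phi(x_2)} + 3 s_{\Phi(x_3)} - s_{\Phi(x_4)} = 0$, hence (since only trivial solutions exist) $\Phi(x_1)=\Phi(x_4)$ and $\Phi(x_2)=\Phi(x_3)$, and then the coloring rules this out. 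Your proposal tries to realize this with a multivariate $\vec{\phi}(n)=((n+v_j)^2)_j$ and a color-fragmented $S \subset (\ZZ/N\ZZ)^d$, which makes the later counting ambiguous: it is not clear that the kernel condition actually forces a symmetric coloring in your setup, and you do not spell out where the bound on $|S|$ comes from.

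Third---and this is where the exponent $3 + (1/2)\log_r N$ specifically is decided---you need the set $S$ of size $r$ avoiding non-trivial $a - 3b + 3c - d = 0$ to live inside $\ZZ/m\ZZ$ with $m = O(r^2)$. The marginal density is $\asymp 1/m$, so $\alpha \asymp r^{-2\ell}$ after $\ell$-fold tensoring of the coloring, giving $\ell \approx \tfrac12 \log_r(1/\alpha)$, while the $4$-AP count picks up a factor $N^{-\ell} m^{-3}$ from the coloring and the interval widths; these combine to $\alpha^{3 + (1/2)\log_r N}$. The paper proves this Sidon-type bound for the specific equation by taking integers with base-$9$ digits in $\{0,1,2\}$ (Lemma~\ref{lem:13avoid}); the off-the-shelf Ruzsa bound only gives $m = O(r^3)$, which would degrade the exponent to $3 + (1/3)\log_r N$. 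You would need to supply exactly this ingredient, and your proposal does not.
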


Here is a 3-coloring of $\Z/22\Z$ that avoids symmetrically colored 4-APs (found by computer search):
\[
1333221232131211333233.
\]
This implies the following bound, improving earlier bounds by Gowers~\cite{Gow20} and Wolf~\cite{Wol10}.

\begin{corollary}
\label{cor:quant-bounds-4AP}
$\rho_4(\alpha) = O(\alpha^{3+(1/2)\log_3 22}) = O(\alpha^{4.406})$.
\end{corollary}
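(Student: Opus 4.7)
The proof plan is almost immediate: apply \cref{thm:even-color-quan} to the explicit 3-coloring $c\colon \Z/22\Z \to \{1,2,3\}$ displayed just before the corollary, namely
\[
c(0)c(1)\cdots c(21) = 1333221232131211333233.
\]
With $r=3$ and $N=22$, the theorem gives a constant $C$ (depending only on this coloring) such that $\rho_4(\alpha) \le C\alpha^{3 + (1/2)\log_3 22}$ for all $0<\alpha<1/2$. Since $\log_3 22 = \log 22/\log 3 \approx 2.8124$, the exponent is $3 + (1/2)\log_3 22 \approx 4.4062$, and so $\rho_4(\alpha) = O(\alpha^{4.406})$, as claimed.

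Thus the only actual content of the proof is to verify that the displayed coloring $c$ contains no symmetrically colored 4-AP in $\Z/22\Z$. This is a finite check: we must confirm that for every pair $(n,d) \in \Z/22\Z \times (\Z/22\Z \setminus \{0\})$, it is not simultaneously the case that $c(n)=c(n+3d)$ and $c(n+d)=c(n+2d)$. There are $22 \cdot 21 = 462$ such pairs, and each check is a comparison of two pairs of colors; the total verification is trivial by computer (and in principle doable by hand in a table), and the coloring was in fact produced by the computer search alluded to in the excerpt.

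The main (and really only) obstacle was locating the coloring in the first place; once it is in hand, the corollary is a one-line application of \cref{thm:even-color-quan}. I would present the proof by (i) asserting the verification of the no-symmetrically-colored-4-AP property with a brief indication that it is a routine finite check, (ii) invoking \cref{thm:even-color-quan} with $(r,N) = (3,22)$, and (iii) doing the elementary numerical computation $3 + (1/2)\log_3 22 < 4.407$ to obtain the stated exponent.
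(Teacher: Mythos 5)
Your proposal is exactly the paper's argument: the displayed string is a 3-coloring of $\Z/22\Z$ verified (by computer) to avoid symmetrically colored 4-APs, and \cref{thm:even-color-quan} with $(r,N)=(3,22)$ gives the exponent $3+\tfrac12\log_3 22 < 4.407$. Nothing is missing; the finite check and the one-line numerical estimate are all that is needed.
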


Here is a family of constructions of Fourier uniform sets in $\ZZ/N\ZZ$. 
Fix some Jordan measurable $S \subset [0,1)^2$ (here $S$ being \emph{Jordan measurable} is the same as saying that the boundary of $S$ has zero measure, i.e., $1_S$ is Riemann integrable) such that the intersection of $S$ with every vertical line with $x$-coordinate in $[0,1)$ has measure $\alpha$.
Set $A_N \subset \ZZ/N\ZZ$ to be
\[
A_N = \{ n \in \ZZ/N\ZZ \colon  (n/N \bmod 1, n^2/N \bmod 1) \in S\}.
\]
As we will see in \cref{sec:torus-formalism}, $A_N$ is $o(1)$-Fourier uniform.
This construction is quite broad and natural from the perspective of quadratic Fourier analysis.
For example, Gowers' construction~\cite{Gow20} is a special case.
We will show that \cref{conj:unif4ap}, when restricted to constructions of subsets of $\ZZ/N\ZZ$ of this form, is actually equivalent to \cref{conj:sc4ap}. We will make the statement of this converse precise in \cref{sec:torus-formalism}.

\subsection{\texorpdfstring{$U^{k-2}$}{Uk-2}-uniform sets with few \texorpdfstring{$k$}{k}-APs}

It is now a standard fact that the Gowers uniformity norms~\cite{Gow01} control longer APs: a $U^{k-1}$-uniform set has $k$-AP density close to random.
So a natural extension of Ruzsa's $4$-AP question is whether there are $U^{k-2}$-uniform set with few $k$-APs.

Recall that for a function $f\colon G\to\C$ on a finite abelian group $G$, the \emph{$k$-th Gowers uniformity norm} is defined by 
\[\|f\|_{U^k}^{2^k}=\E_{n,h_1,\ldots,h_k\in G}\partial_{h_1}\partial_{h_2}\cdots\partial_{h_k}f(n),\]
where $\partial_hf(n)=f(n)\overline{f(n-h)}$.
We say that a sequence of sets $A_N \subset \Z/N\Z$ is \emph{$U^k$-uniform} if $\norm{1_{A_N} - \abs{A_N}/N}_{U^k} = o(1)$.
It is a standard fact that Fourier uniformity is equivalent to $U^2$ uniformity.

Gowers made the following generalization of \cref{conj:unif4ap} to all even APs.

\begin{conjecture}[{Gowers \cite[Conjecture 4.2]{Gow01}}]
For every even $k\ge 4$ and $\epsilon > 0$, there exists $\eta > 0$ so that if $A \subset \ZZ/N\ZZ$ satisfies $\norm{1_A - \alpha}_{U^{k-2}} \le \eta$ where $\alpha = \abs{A}/N$, then $\Lambda_k(1_A) \ge \alpha^k - \epsilon$. 
\end{conjecture}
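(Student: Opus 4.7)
The plan is to attempt a direct expansion plus Cauchy--Schwarz (generalized von Neumann) argument. Write $1_A = \alpha + g$ with $g$ mean-zero and $\norm{g}_{U^{k-2}} \le \eta$. Multilinearly expand $\Lambda_k(1_A) = \Lambda_k(\alpha + g, \ldots, \alpha + g)$ into $2^k$ terms; the main term is $\Lambda_k(\alpha, \ldots, \alpha) = \alpha^k$, and the remaining $2^k - 1$ ``error'' terms each contain at least one factor of $g$. It would suffice to show that each such error term is $o_{\eta \to 0}(1)$, which would yield both the lower bound in the conjecture and a matching upper bound.

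Next I would invoke the generalized von Neumann theorem, which bounds $\abs{\Lambda_k(f_1, \ldots, f_k)}$ by $\min_i \norm{f_i}_{U^{k-1}}$ when the $f_i$ are uniformly bounded by $1$. This would resolve the conjecture if we had $\norm{g}_{U^{k-1}} \le \eta$, but we only assume $\norm{g}_{U^{k-2}} \le \eta$, so the classical estimate falls one degree short. To try to recover that missing derivative, I would exploit the parity of $k$: the positions of an even-length AP can be paired symmetrically about the center as $(1,k), (2,k-1), \ldots, (k/2, k/2+1)$, and reparametrizing via the midpoint and half-differences might let a single extra Cauchy--Schwarz absorb one level of the uniformity norm. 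For $k = 4$ one would rewrite the AP as $(c - 3d/2, c - d/2, c + d/2, c + 3d/2)$ and hope the $4$-AP form collapses to something controlled by a lower Gowers norm.

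The main obstacle, and what I expect will sink this plan, is that no such algebraic identity exists in general: the true complexity of $k$-APs in the sense of inverse theorems for Gowers norms is $k-1$, not $k-2$. The symmetric reparametrization above produces, after Cauchy--Schwarz, a quadratic phase obstruction of precisely the type that makes Gowers's set $\{n : 0 \le n^2 \bmod N < \alpha N\}$ Fourier (i.e.\ $U^2$) uniform while distorting its $4$-AP count. Since \cref{conj:gowers4ap} has already been disproved for $k = 4$, and the coloring-theoretic \cref{thm:even-color-non-quan} together with the paper's announced extension to all even $k$ strongly indicates that analogous counterexamples exist for every even $k \ge 4$, I would expect any serious attempt in this direction to terminate not in a positive resolution but in the identification of a quadratic- or nilpotent-phase counterexample, precisely along the lines of the coloring framework pursued in the rest of the paper.
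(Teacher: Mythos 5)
You have correctly recognized that this is not a provable statement, and indeed the paper provides no proof of it: it is quoted verbatim as \cite[Conjecture~4.2]{Gow01} purely to set the historical context, and the surrounding discussion makes clear that the authors believe it to be \emph{false} for every even $k\ge 4$. For $k=4$ it has already been disproved by Gowers~\cite{Gow20}, and the paper's entire program (\cref{thm:even-color-non-quan}, \cref{thm:kap-reduction-even}, \cref{thm:even-color-torus}, etc.)\ is aimed at producing counterexamples for larger $k$ via colorings avoiding symmetrically colored $k$-APs. So there is no proof in the paper to compare yours against.

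Your diagnosis of why a direct argument must fail is also accurate, and it matches the standard picture: the generalized von Neumann theorem controls $\Lambda_k$ by $\norm{\,\cdot\,}_{U^{k-1}}$, and $k$-APs genuinely have true complexity $k-1$, so there is no reparametrization trick that lets a $U^{k-2}$ hypothesis close the gap. The symmetric-pairing structure you noticed for even $k$ is a real feature, but it cuts the other way: it is precisely what the paper exploits (via the $k$-binomial equation and symmetrically colored $k$-APs) to \emph{construct} $U^{k-2}$-uniform sets whose $k$-AP count is anomalous, not to prove the conjecture. In short, your attempt terminates where it should, and for the right reason; just be aware that the labeled statement was never intended to be proved in this paper.
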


As mentioned above, this conjecture has been disproved for $k=4$. In light of this, it is reasonable to believe that it is false for all $k\ge 4$.
Furthermore, we conjecture (precise statement to follow) that for each $k \ge 4$, there exist $U^{k-2}$-uniform sets with density $\alpha$ and $k$-AP density less than $\alpha^C$, for arbitrarily large $C$.
We prove our conjecture for odd\footnote{Gowers~\cite[p.~488]{Gow01} writes that ``It can be shown that quadratically uniform sets sometimes contain significantly fewer progressions of length five than random sets of the same cardinality. However, the example depends in an essential way on 5 being odd,'' though without providing further details.} $k$, and reduce the conjecture for each even $k \ge 4$ to a conjecture about avoiding symmetrically colored $k$-APs.

\begin{definition}[Minimum $k$-AP density in a $U^{k-2}$-uniform set]
For any integer $k \ge 4$ and $0 < \alpha < 1$, 
let $\rho_k(\alpha)$ be the largest real number so that 
for any $\epsilon > 0$, there exist $\eta > 0$ and $N_0$ so that if $N \ge N_0$ and $A \subset \ZZ/N\ZZ$ satisfies $\norm{1_A - \alpha}_{U^{k-2}} \le \eta$, then
$\Lambda_k(1_A) \ge \rho_k(\alpha) - \epsilon$.
\end{definition}

\begin{conjecture}[Ruzsa's question extended to $k$-APs] 
\label{conj:unifkap}
	Let $k \ge 4$. 
	For every $C>0$, $\rho_k(\alpha) < \alpha^C$ for all sufficiently small $\alpha$.
\end{conjecture}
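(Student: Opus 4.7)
The plan is to treat odd and even $k$ separately, since the techniques differ fundamentally: for odd $k$ I can hope for an unconditional proof via an explicit Behrend-type construction, whereas for even $k$ I expect only a reduction to a hard coloring Ramsey conjecture in the spirit of \cref{conj:sc4ap}.

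\emph{Odd $k \ge 5$.} The template is the natural higher-dimensional generalization of the torus construction from \cref{sec:torus-formalism}: set
\[
A = \{n \in \ZZ/N\ZZ : (P_1(n)/N \bmod 1, \ldots, P_d(n)/N \bmod 1) \in S\},
\]
where each $P_i$ has degree exactly $k-2$ (which is odd, since $k$ is odd) and $S \subset \TT^d$ is Jordan-measurable of Haar measure $\alpha$. First, I would verify $U^{k-2}$-uniformity by expanding $1_S$ in a Fourier series on $\TT^d$: each nontrivial mode contributes a polynomial phase $e^{2\pi i Q(n)/N}$ of degree $\ge k-2$, whose $U^{k-2}$-norm tends to $0$ by Weyl equidistribution. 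Second, to count $k$-APs, observe that the $(k-1)$-th discrete derivative of a degree-$(k-2)$ polynomial vanishes identically, giving the palindromic linear relation $\sum_{j=0}^{k-1}(-1)^j \binom{k-1}{j} P_i(n+jr) = 0$; hence the image of a $k$-AP under $n \mapsto (P_1(n), \ldots, P_d(n))$ equidistributes in a codimension-$d$ subspace $V \subset \TT^{kd}$, yielding $\Lambda_k(1_A) \approx \mu_V(S^k)$. Finally, I would pick $S$ as a Behrend-type thin spherical shell in $\TT^d$ with $d = \Theta(\log(1/\alpha))$; for odd $k$, the palindromic coefficient pattern should admit a higher-order analogue of the parallelogram identity, bounding $\mu_V(S^k) \le \alpha^{k + c_k \log(1/\alpha)}$.

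\emph{Even $k \ge 4$.} Here the plan is to generalize \cref{thm:even-color-non-quan} and reduce the conjecture to the existence of $N^{o(1)}$-colorings of $[N]$ without symmetrically colored $k$-APs, namely colorings $c : [N] \to [r]$ for which no $k$-AP $n, n+r, \ldots, n+(k-1)r$ satisfies $c(n+ir) = c(n+(k-1-i)r)$ for all $i$. The same torus construction, with $S \subset \TT^2$ encoding the coloring in the second coordinate, shows that symmetric colorings correspond exactly to the combinatorial obstructions forcing $\Lambda_k(1_A) \gtrsim \alpha^k$, so avoiding them gives the desired sparse $k$-AP count.

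\emph{Main obstacle.} For odd $k$, the delicate step is quantifying the Behrend-type rigidity on the codimension-$d$ subspace $V$ cut out by the palindromic relation. The classical 3-AP Behrend construction succeeds because $a + c = 2b$ together with $|a| = |b| = |c|$ forces $a = c$; for $k \ge 5$, an analogous algebraic identity is needed, and Gowers' footnote remark indicates this is expected to work precisely for odd $k$. Pinning down this identity and optimizing over $d$ to achieve the $\alpha^{c_k \log(1/\alpha)}$ bound is the main technical challenge. For even $k$, the obstacle is instead that the resulting coloring conjecture appears to be as hard as \cref{conj:sc4ap} itself, which the authors were unable to resolve.
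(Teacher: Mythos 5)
Your high-level strategy—prove the odd case unconditionally and reduce the even case to a coloring Ramsey conjecture—correctly matches the paper's decomposition, and you correctly identify the role of symmetrically colored $k$-APs for even $k$. However, your sketch for odd $k$ contains a genuine gap that does not track the paper's argument.

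The gap is the ``higher-order analogue of the parallelogram identity'' on a thin spherical shell. The classical Behrend rigidity for $3$-APs works because $a+c=2b$ together with $\lvert a\rvert=\lvert b\rvert=\lvert c\rvert$ forces $a=b=c$ by strict convexity of the sphere. The $k$-binomial relation $\sum_{i=1}^k(-1)^i\binom{k-1}{i-1}y_i=0$ has no analogous rigidity on a sphere: for $k=5$ the relation rewrites as $\tfrac18 y_1+\tfrac34 y_3+\tfrac18 y_5=\tfrac12 y_2+\tfrac12 y_4$, i.e.\ two convex combinations of points on the sphere agree, which by itself forces nothing. In fact, the obstruction is precisely the existence of ``trivial'' solutions in Ruzsa's sense (index-sets $I$ with $\sum_{i\in I}(-1)^i\binom{k-1}{i-1}=0$), and these cannot be excluded by any shell geometry; they must be handled combinatorially. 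The paper's actual construction for odd $k$ does not use $\TT^d$ with a spherical shell at all. It works in $\TT^2$: the second coordinate carries a union of tiny intervals indexed by an $r$-element subset $S\subseteq\ZZ/m\ZZ$ avoiding \emph{non-trivial} solutions to the $k$-binomial equation (supplied by Ruzsa's greedy Lemma~\ref{lem:avoid-nontriv}), while the first coordinate carries a Jordan-measurable $r$-coloring of $\TT$ whose $k$-binomial-pattern density is tiny (supplied by Lemmas~\ref{lem:avoid-pattern} and~\ref{lem:disc-to-cont-k-ap}, built from Behrend-type $k$-pattern-free sets). The point is that for odd $k$ the only trivial solutions are the ``small-configuration'' ones, which a Behrend-based coloring can suppress; for even $k$ the paired trivial solutions $n_i=n_{k+1-i}$ additionally arise, and this is exactly where symmetrically colored $k$-APs appear. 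Your proposal collapses these two mechanisms into a single geometric rigidity claim that, as far as is known, fails, so the odd case of your argument is not a proof.

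Two minor further issues: first, the statement you are addressing is labeled as a conjecture in the paper precisely because the even case remains open, so at best one proves it for odd $k$ and reduces it for even $k$; second, the quantitative bound $\alpha^{c_k\log(1/\alpha)}$ in the odd case emerges in the paper from the interaction of $N=r^{\Theta(\log r)}$ (Behrend) and $m=\Theta(r^{k-1})$ (Ruzsa's lemma), a bookkeeping step your sketch does not address and which is not automatic from choosing $d=\Theta(\log(1/\alpha))$.
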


We prove \cref{conj:unifkap} for odd $k$.

\begin{theorem}[Odd $k$]
\label{thm:kap-cons-odd}
For every odd integer $k \ge 5$ there is some constant $c_k>0$ such that $\rho_k(\alpha) < \alpha^{c_k \log(1/\alpha)}$ for all $0 < \alpha < 1/2$.
\end{theorem}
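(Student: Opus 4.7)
The plan is to construct, for each odd $k \ge 5$ and each small $\alpha > 0$, an explicit $U^{k-2}$-uniform subset $A \subset \ZZ/N\ZZ$ of density $\alpha$ with $\Lambda_k(1_A) \le \alpha^{c_k \log(1/\alpha)}$, by combining a polynomial-phase construction (to enforce Gowers uniformity) with a Behrend-type set (to suppress the $k$-AP count), and exploiting the central symmetry of odd-length progressions to link them.

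First I would use the torus formalism of \cref{sec:torus-formalism}: define
\[
A = \set{n \in \ZZ/N\ZZ : \phi(n) \in S}, \qquad \phi(n) \coloneqq \paren{n/N,\, n^2/N,\, \ldots,\, n^{k-2}/N} \bmod 1,
\]
for a Jordan-measurable $S \subset [0,1)^{k-2}$ with $|S| = \alpha$. By Weyl equidistribution applied to polynomial sequences of degree $\le k-2$, the set $A$ has density $\alpha + o(1)$ and satisfies $\norm{1_A - \alpha}_{U^{k-1}} = o(1)$, so in particular $A$ is $U^{k-2}$-uniform. The $k$-AP density in $A$ then converges as $N \to \infty$ to the two-dimensional Lebesgue measure of $\set{(x, y) \in [0,1)^2 : \phi(x + jy) \in S \text{ for all } 0 \le j < k}$, reducing the problem to choosing $S$ so that this measure is small.

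For $S$ I would use a high-dimensional Behrend-type shell encoded into $[0,1)^{k-2}$: take auxiliary dimension $t \asymp \log(1/\alpha)$ and let $B \subset \ZZ^t \cap [0, M]^t$ be a thin spherical shell (in the style of Behrend's 3-AP-free set), then embed $B$ into $[0,1)^{k-2}$ by splitting the $t$ coordinates into groups and base-$M$ encoding each group as a digit expansion in one of the $k-2$ torus coordinates. The result is $S$ with $|S| = \alpha$ and with quantitative 3-AP-freeness inherited from the shell. To exploit odd $k = 2m+1$, reparametrize each $k$-AP as $(c + \ell d)_{\ell = -m}^m$ (with $c = n + md$) and pair symmetric elements $c \pm \ell d$. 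Each polynomial coordinate of $\phi$ splits into symmetric and antisymmetric parts on such a pair; applying the Behrend convexity lemma to the encoded shell converts the pair of constraints $\phi(c \pm \ell d) \in S$ into a 3-AP-like restriction among the averaged values, with the center $\phi(c) \in S$ playing the role of the midpoint. Iterating this over $\ell = 1, \ldots, m$ and optimizing the shell thickness against $t$ should yield $\Lambda_k(1_A) \le \alpha^{c_k \log(1/\alpha)}$ for a suitable $c_k > 0$.

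The main obstacle I expect is making this central-symmetry argument quantitatively sharp: each of the $m$ pairings must contribute an independent saving, and one must verify that the digit encoding of the Behrend shell preserves both Jordan-measurability (for the torus formalism to apply) and enough 3-AP-freeness (for the convexity lemma to yield a non-trivial restriction after passing through the polynomial map $\phi$). Oddness of $k$ is essential here: only odd-length APs have a central element around which to pair the remaining elements, and without this symmetry decomposition the Behrend constraint on $S$ does not translate into a constraint on the common difference $d$. This is consistent with the even-$k$ case remaining only conjectural, and explains why the paper's even-$k$ approach instead routes through symmetrically colored APs.
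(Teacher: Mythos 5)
Your construction is not $U^{k-2}$-uniform, which breaks the proposal at its first step. You embed $[N]$ into $\TT^{k-2}$ via $\phi(n) = (n/N, n^2/N, \ldots, n^{k-2}/N)$ and take $A = \phi^{-1}(S)$ for a generic Jordan-measurable $S$ of measure $\alpha$. Expanding $1_S$ in its Fourier series, $1_A(n) = \sum_{\mathbf r} \widehat{1_S}(\mathbf r)\, e\!\left(\sum_j r_j n^j/N\right)$. A term with $r_j \neq 0$ for some $2 \le j \le k-3$ (and $r_{j'}=0$ for $j'>j$) contributes a polynomial phase of degree $j < k-2$, which has $U^{j+1}$ norm equal to $1$ and hence $U^{k-2}$ norm bounded away from $0$. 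Unless $S$ is specially chosen so that all such Fourier coefficients vanish, $\norm{1_A - \alpha}_{U^{k-2}}$ is $\Omega(1)$, not $o(1)$. Your intermediate claim $\norm{1_A - \alpha}_{U^{k-1}} = o(1)$ is also incorrect even for a pure degree-$(k-2)$ phase: differencing $k-1$ times annihilates a degree-$(k-2)$ polynomial, so $\norm{e(n^{k-2}/N)}_{U^{k-1}} = 1$. Weyl equidistribution buys you smallness of $U^{k-2}$, never $U^{k-1}$, and only when no lower-degree phase is present.

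This is exactly why the paper restricts to the $2$-torus: it maps $n \mapsto (n/N, n^{k-2}/N)$ and requires $F\colon \TT^2 \to [0,1]$ to have constant first marginal, so that $F - \alpha$ is supported on Fourier modes with nonzero $y$-frequency, all of which yield degree-$(k-2)$ phases of small $U^{k-2}$ norm. The condition ``constant first marginal'' is the precise replacement for your (missing) requirement on $S$; if one tried to carry out your higher-dimensional version one would need every intermediate marginal to be constant, which essentially forces a product structure equivalent to the $2$-torus formulation. Beyond the uniformity issue, the connection between the $k$-AP count and $S$ is also not what you describe: the paper shows $\Lambda_k(f_N)$ converges to an average of $F$ over $(x_i, y_i)$ with the $x_i$ in AP and the $y_i$ satisfying the $k$-binomial equation $\sum_{i=1}^k (-1)^i \binom{k-1}{i-1} y_i = 0$; the construction then requires a \emph{coloring} of $\TT$ (built from a Behrend-type coloring avoiding monochromatic $k$-patterns) together with a Ruzsa-type set avoiding nontrivial solutions to the $k$-binomial equation, not a single Behrend shell. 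Finally, the role of oddness is different from your ``pair about the center'' heuristic: for odd $k$ the trivial solutions of the $k$-binomial equation have no symmetric pairing structure (case (i) of \cref{defn:k-binomial} requires $k$ even), so the construction does not need the coloring hypothesis of \cref{conj:sckap}; that, not a convexity pairing of $c \pm \ell d$, is what makes odd $k$ unconditional.
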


Ruzsa \cite{BHK05} constructed Fourier-uniform sets with density $\alpha$ and 5-AP density $\alpha^{c\log(1/\alpha)}$.  The above result is stronger, even for $k=5$, since the provided sets are $U^{k-2}$-uniform.

For even $k\geq 4$, a \emph{symmetrically colored $k$-AP} is some $n, n+d, \cdots, n+(k-1)d$ with $d \ne 0$ and where $n + (i-1)d$ and $n + (k-i)d$ have the same color for all $i = 1, \dots, k/2$.

\begin{conjecture}[Avoiding symmetrically colored $k$-APs]
\label{conj:sckap}
Let $k \ge 4$ be even. 
For all $N$, there exists an $N^{o(1)}$-coloring of $[N]$ avoiding symmetrically colored $k$-APs.
\end{conjecture}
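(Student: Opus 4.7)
My plan is to first reduce \cref{conj:sckap} to its $k=4$ case, \cref{conj:sc4ap}, and then to attempt a direct construction for $k=4$.

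For the reduction, an $r$-coloring of $[N]$ with no symmetrically colored 4-APs automatically has no symmetrically colored $k$-AP for every even $k \ge 4$. Given a symmetric $k$-AP $n, n+d, \ldots, n+(k-1)d$ with $d \ne 0$, consider its innermost 4-AP
\[
    a, a+d, a+2d, a+3d, \qquad a = n + (k/2-2)d.
\]
The two symmetry conditions $\operatorname{col}(a) = \operatorname{col}(a+3d)$ and $\operatorname{col}(a+d) = \operatorname{col}(a+2d)$ are exactly the $i = k/2-1$ and $i = k/2$ pairing conditions of the original $k$-AP, so the inner 4-AP is symmetric with $d \ne 0$. Thus \cref{conj:sckap} follows for all even $k \ge 4$ from the $k=4$ case alone.

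For \cref{conj:sc4ap} itself, my attempt would be a tilted Rankin--Behrend-type construction. Identify $[N]$ with a box $[M]^D$ via base-$M$ digits and color $x \in [M]^D$ by $\lfloor (Q(x)+\langle v,x\rangle)/q\rfloor \bmod s$ for a polynomial $Q$, a shift $v$, and parameters $M,D,q,s$ with $s = N^{o(1)}$. The quadratic choice $Q(x) = \|x\|_2^2$, which already makes each color class 3-AP-free and hence 4-AP-free, is pathological for \emph{symmetric} 4-APs: on carry-free 4-APs $x, x+e, x+2e, x+3e$ the two differences $Q(x)-Q(x+3e)$ and $Q(x+e)-Q(x+2e)$ are always in the fixed ratio $3:1$, so monochromaticity of the outer pair forces it for the inner, and a linear tilt $\langle v,x\rangle$ preserves this rigidity. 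I would therefore take $Q$ of degree $\ge 3$, for instance $Q(x)=\sum_i x_i^3$, for which the ratio of the two pairing differences is a non-constant rational function of $x$ and $e$, while each color class remains 4-AP-free by the usual Rankin argument. Averaging over a random tilt $v$ and/or invoking the Lov\'asz Local Lemma on a dependency graph whose vertices are 4-APs should then produce an explicit coloring avoiding symmetrically colored 4-APs, with the number of colors tuned to $N^{o(1)}$ by sending $D \to \infty$ slowly with $N$ in the classical Behrend fashion.

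The main obstacle is precisely the phenomenon the authors emphasize: a symmetric 4-AP is a \emph{conjunction} of two monochromatic-pair conditions, yet for the simplest Behrend-type colorings these two conditions collapse into one. Any construction must genuinely decouple them while keeping each color class 4-AP-free. For cubic $Q$ the two pairing differences become mixed polynomial expressions in $x$ and $e$, and ruling out simultaneous coincidences modulo $q$---including the base-$M$ carry events that can artificially align them---appears to require a subtle decoupling estimate. Proving this near-independence with a bound strong enough to survive a color count of $N^{o(1)}$ is where I expect the argument to be most fragile, and is likely why the conjecture remains open.
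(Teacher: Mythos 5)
This statement is a conjecture in the paper, not a theorem; the paper does not claim or provide a proof. The authors explicitly say they were unable to resolve even the $k=4$ case despite some effort, and the only recorded bounds are an $O(N^{2/k})$-coloring (from the Lov\'asz local lemma) from above and $\Omega((\log\log N)^{c_k})$ colors from below (from Gowers' bound on van der Waerden).

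Your reduction of general even $k$ to $k=4$ by looking at the innermost 4-AP is correct, and is precisely the one-line observation the paper records immediately after stating the conjecture: every symmetrically colored $k$-AP contains a symmetrically colored 4-AP with the same nonzero common difference. Your attempted construction for $k=4$ is not a proof, as you acknowledge. One concrete flaw beyond the decoupling issue you flag: you claim that replacing $Q(x) = \|x\|_2^2$ by a cubic such as $Q(x) = \sum_i x_i^3$ keeps each color class 4-AP-free ``by the usual Rankin argument.'' Rankin's and Behrend's arguments hinge on the strict convexity of spheres, which guarantees a line meets a thin spherical shell in a controlled way. For cubic $Q$ the level sets and the strips $\{x : mq \le Q(x) + \langle v,x\rangle < (m+1)q\}$ are not convex, and a univariate cubic can take the same value at three distinct points; showing that such a strip contains no 4-term progression would require a genuinely new argument that you do not supply. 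More fundamentally, the obstruction you identify---that on Behrend-type colorings the outer and inner pairing differences of a 4-AP are rigidly coupled, so avoiding a \emph{symmetric} 4-AP is not implied by merely avoiding monochromatic ones---is exactly the difficulty the paper itself flags when it remarks that it ``seems difficult to avoid symmetrically colored 4-APs by using Behrend-like sets as color classes.'' The conjecture remains open both in the paper and after your proposal.
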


Choosing the coloring uniformly at random, an application of the Lov\'asz local lemma implies that there is an $O(N^{2/k})$-coloring of $[N]$ avoiding symmetrically colored $k$-APs. On the other hand, any such coloring must also avoid monochromatic $k$-APs and thus must use at least $\Omega(\exp((\log \log N)^{c_k}))$ colors, by Leng--Sah--Sawhney's bound on van der Waerden's theorem \cite{LSS24}. These are the best bounds that we know of for general $k$.

Also note that the $k=4$ case (i.e., \cref{conj:sc4ap}) implies \cref{conj:sckap} for all $k$, since each symmetrically colored $k$-AP contains a symmetrically colored $4$-AP.

\begin{theorem}[Even $k$]
\label{thm:kap-reduction-even}
\cref{conj:sckap} for each even $k \ge 4$ implies \cref{conj:unifkap} for the same $k$.
\end{theorem}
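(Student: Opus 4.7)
The plan is to extend the reduction from the $k=4$ case (\cref{thm:even-color-non-quan}, via the quantitative \cref{thm:even-color-quan}) to all even $k\ge 4$. I will aim for a quantitative statement of the shape $\rho_k(\alpha)\le C(N,r,k)\,\alpha^{(k-1)+c\log_r N}$ whenever $\ZZ/N\ZZ$ admits an $r$-coloring without symmetrically colored $k$-APs; taking $r=N^{o(1)}$ as in \cref{conj:sckap} and letting $N\to\infty$ makes the exponent arbitrarily large, which is exactly \cref{conj:unifkap}.

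For the construction, I generalize Gowers' quadratic-phase set by replacing $n^2/M$ with a polynomial phase of degree $k-2$. Concretely, I take
\[
A_M=\set{n\in\ZZ/M\ZZ:(n/M\bmod 1,\,n^{k-2}/M\bmod 1)\in S}
\]
for some Jordan measurable $S\subset\TT^2$ whose fibers over the first coordinate all have measure $\alpha$, so that $A_M$ has density $\alpha+o(1)$. A higher-order Weyl equidistribution argument should show that such $A_M$ is $U^{k-2}$-uniform: since $n^{k-2}$ is a polynomial of degree exactly $k-2$, it equidistributes modulo $M$ in a way that decorrelates $1_{A_M}$ from every polynomial phase of degree at most $k-3$, generalizing the Gauss-sum argument that yields Fourier uniformity in the $k=4$ case. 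The $r$-coloring $\chi$ enters through the design of $S$: partition $\TT$ along the second coordinate into $N$ equal intervals labeled by their $\chi$-colors, choose an analogous partition along the first coordinate, and build $S$ so that membership in $A_M$ encodes a color-matching condition derived from the combinatorial structure of $\chi$.

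The main obstacle is engineering $S$ so that a $k$-AP in $A_M$ forces a symmetrically colored $k$-AP in $\chi$. For a $k$-AP $n, n+d, \dots, n+(k-1)d$, the first coordinates form a genuine $k$-AP in $\TT$, hence a $k$-AP of labels $j_0,\dots,j_{k-1}$ in $\ZZ/N\ZZ$; the second coordinates $(n+ld)^{k-2}/M$ satisfy polynomial identities arising from vanishing $(k-1)$-th finite differences. For even $k$ the symmetric-pair structure---writing $c=n+(k-1)d/2$ and $t_l=(k-1)/2-l$, the pair sum $(c-t_l d)^{k-2}+(c+t_l d)^{k-2}$ is an even polynomial in $d$---should be leveraged to force the color matching $\chi(j_l)=\chi(j_{k-1-l})$ along the first-coordinate $k$-AP. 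For $k=4$ the identity $n^2+(n+3d)^2-(n+d)^2-(n+2d)^2=4d^2$ (a function of $d$ alone) enables a clean encoding of this condition. For $k\ge 6$ the analogous symmetric-pair differences depend on both $c$ and $d$, so the construction will be more delicate, likely requiring $S$ to live in a higher-dimensional torus $\TT^{k-2}$ and to incorporate the full sequence of phases $n, n^2, \dots, n^{k-2}$ with a correspondingly more intricate coloring encoding. Once $S$ is engineered so that membership in $A_M$ on a $k$-AP forces the symmetric coloring condition on $(j_0,\ldots,j_{k-1})$, the hypothesis that $\chi$ has no symmetrically colored $k$-APs bounds $\Lambda_k(1_{A_M})$ as desired and completes the reduction.
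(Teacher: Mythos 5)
Your high-level plan is the right one: build a set of the form $\set{n : (n/M, n^{k-2}/M) \in S}$ for a carefully designed $S \subset \TT^2$ with constant first marginal, use Weyl equidistribution to get $U^{k-2}$-uniformity, encode the coloring into $S$ so that $k$-APs in the set force symmetrically colored $k$-APs, and deduce the conjecture from the quantitative bound. This is precisely the paper's route (\cref{prop:conv}, \cref{thm:even-color-torus}, \cref{prop:set-torus-bound}). But there are several substantive gaps.

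Your central worry --- that ``for $k\ge 6$ the analogous symmetric-pair differences depend on both $c$ and $d$, so the construction will likely require $S$ to live in $\TT^{k-2}$ with the full sequence of phases $n, n^2, \dots, n^{k-2}$'' --- is incorrect, and resolving it is the heart of the argument. The key algebraic fact is \cref{eq:k-binom-equality}: if $x_1,\dots,x_k$ is a $k$-AP, the $(k-2)$-nd powers satisfy the single equation $\sum_{i=1}^k (-1)^i \binom{k-1}{i-1} x_i^{k-2}=0$. After equidistribution, this is the \emph{only} constraint on the second coordinates, which is why $\TT^2$ suffices (\cref{prop:conv}). Your observation that $(c-t_\ell d)^{k-2}+(c+t_\ell d)^{k-2}$ is even in $d$ is true but not useful; what you need is that the trivial solutions $(n_1,\dots,n_k)$ to the $k$-binomial equation include, but for $k \geq 6$ are \emph{not limited to}, the symmetric ones $n_i = n_{k+1-i}$. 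This forces the paper to handle an extra degenerate case (\cref{defn:k-binomial}, case (b)), dealt with via an auxiliary Behrend-type coloring avoiding monochromatic $k$-patterns (\cref{lem:avoid-pattern}); your proposal does not notice this issue and would be incomplete without addressing it.

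You also omit two further structural ingredients. First, the mechanism by which the coloring is ``encoded'' in $S$: the paper stacks short vertical intervals at heights $s_j/m$ where $\{s_1,\dots,s_r\}\subset\ZZ/m\ZZ$ is chosen to have \emph{no non-trivial solutions} to the $k$-binomial equation (\cref{lem:avoid-nontriv}, or \cref{lem:13avoid} for $k=4$); this is what converts the analytic constraint on the $y_i$'s into a combinatorial constraint on the colors of the $x_i$'s. Without such a set you cannot control $\wt\Lambda_k$. Second, to push the exponent $\to\infty$ as $\alpha\to 0$ with a \emph{fixed} coloring of $\ZZ/N\ZZ$, you need the tensor-power construction (\cref{lem:power}) producing colorings of $\ZZ/N^\ell\ZZ$ for all $\ell$; your proposal as stated only gives a bound for a fixed density scale. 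Finally, there is a delicate discrete-to-continuous step (\cref{lem:disc-to-cont-k-ap}) converting a discrete $k$-binomial-pattern-free coloring of $[N]$ into a coloring of $\TT$ with low $k$-binomial pattern density; the interlacing trick used there handles wrap-around issues you have not considered.
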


\subsection{General patterns}

Our results also generalize from $k$-APs to the setting of arbitrary one-dimensional $k$-point patterns.
The proofs are mostly similar to the $k$-AP case, though there are some additional technical difficulties.

\begin{definition}
Let $\bfa = (a_1, \dots, a_k)$ have increasing integer coordinates $a_1 < a_2 < \cdots < a_k$. We define an \emph{$\bfa$-AP} to be sequence of the form $n + a_1 d, \ldots, n + a_k d$ for some $n, d$, and we say that this $\bfa$-AP is \emph{non-trivial} if $d \ne 0$.	

    In a finite abelian group $G$, given functions $f_1, \dots, f_k \colon G \to \CC$, define
\[
\Lambda_{\bfa}(f_1, \dots, f_k) \coloneqq \EE_{n, d} f_1(n+a_1d) f_2(n+a_2d) \cdots f_k(n+a_kd).
\]
\end{definition}

Then a $k$-AP is the same as an $\bfa$-AP with $\bfa = (0, 1, \dots, k-1)$ and $\Lambda_k$ agrees with $\Lambda_{\bfa}$ in this case.

\begin{definition} [Minimum $\bfa$-AP density in a $U^{k-2}$-uniform set]
Let $\bfa = (a_1, \dots, a_k)$ have increasing integer coordinates.
For $0 < \alpha < 1$, 
let $\rho_\bfa(\alpha)$ be the largest real number so that 
for any $\epsilon > 0$ there exist $\eta > 0$ and $N_0$ so that if $N \ge N_0$ and $A \subset \ZZ/N\ZZ$ satisfies $\norm{1_A - \alpha}_{U^{k-2}} \le \eta$, then
$\Lambda_\bfa(1_A) \ge \rho_k(\alpha) - \epsilon$.
\end{definition}

\begin{conjecture}[Ruzsa's question extended to $\bfa$-APs] 
\label{conj:unifaap}
Let $k \ge 4$ and $\bfa = (a_1, \dots, a_k)$ have increasing integer coordinates.
Then for every $C > 0$, one has $\rho_\bfa(\alpha) < \alpha^C$ for all sufficiently small $\alpha > 0$.	
\end{conjecture}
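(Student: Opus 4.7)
The plan is to generalize \cref{thm:kap-cons-odd,thm:kap-reduction-even} from $k$-APs to arbitrary one-dimensional patterns $\bfa = (a_1, \dots, a_k)$, following a dichotomy analogous to the even/odd distinction for standard $k$-APs. The overall strategy is to construct candidate counterexamples via a higher-degree analog of the torus formalism of \cref{sec:torus-formalism}, and then reduce the problem of making $\Lambda_\bfa(1_{A_N})$ small to a combinatorial Ramsey-type question about an appropriate notion of ``symmetrically colored $\bfa$-APs''.

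First I would set up the torus construction at $U^{k-2}$ uniformity: pick a polynomial phase $p(n)$ of degree $k-3$ (say $n^{k-3}$) and define $A_N = \{n \in \Z/N\Z : (n/N, p(n)/N) \bmod 1 \in S\}$ for a Jordan-measurable $S \subset [0,1)^2$ whose vertical slices have measure $\alpha$. Standard Weyl equidistribution gives $o(1)$-$U^{k-2}$-uniformity, and the density $\Lambda_\bfa(1_{A_N})$ equals a torus integral of $\prod_{i=1}^k 1_S$ against the pushforward of the uniform measure on $(n,d)$ under the map $i \mapsto ((n + a_i d)/N, p(n + a_i d)/N) \bmod 1$. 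This pushforward is concentrated on the subvariety cut out by the degree-$(k-3)$ polynomial identities that $\{p(n + a_i d)\}_{i=1}^k$ satisfies identically in $(n,d)$; these identities encode the ``symmetry structure'' of $\bfa$ at the relevant complexity.

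These identities pick out a natural group of involutions $\sigma$ on $[k]$ (for standard $k$-APs, this is just $i \mapsto k+1-i$). Calling an $\bfa$-AP \emph{$\sigma$-symmetrically colored} if the $\sigma$-orbit pairs receive matching colors, one converts an $N^{o(1)}$-coloring of $[N]$ avoiding such $\bfa$-APs into a Jordan-measurable $S$ for which $\Lambda_\bfa(1_{A_N}) \le \alpha^C$, mirroring the reduction of \cref{thm:kap-reduction-even}. For patterns whose $\sigma$ has a fixed point on $[k]$ (the analog of odd $k$), such colorings exist unconditionally via a direct construction adapting \cref{thm:kap-cons-odd}, resolving \cref{conj:unifaap} unconditionally for this subfamily. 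For fixed-point-free $\sigma$, one instead formulates a pattern-specific analog of \cref{conj:sckap} on which the construction is conditional.

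The main obstacle will be identifying the correct symmetry structure for each $\bfa$. For a generic pattern, the only degree-$(k-3)$ identity among the $p(n + a_i d)$ may be trivial, which would collapse the torus integral onto all of $[0,1)^k$ and drive $\Lambda_\bfa(1_{A_N})$ only down to $\alpha^k$, not $\alpha^C$. Overcoming this likely requires choosing $p$ to have degree strictly less than $k-3$ (thereby introducing additional identities) while still arranging $U^{k-2}$-uniformity, or allowing higher-dimensional torus targets with several polynomial phase coordinates. Classifying $\bfa$ by the lattice of admissible phase choices, matching the resulting torus constraint to the coloring condition, and handling patterns with multiple inequivalent involutions simultaneously, is where the bulk of the technical work lies; the case of asymmetric $\bfa$ with trivial natural involution group appears especially delicate and may require genuinely new constructions beyond those used for standard APs.
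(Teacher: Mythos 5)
The statement you were asked to address is a conjecture, and the paper does not prove it in full generality; it proves it unconditionally when $k$ is odd or $\bfa$ is not symmetric (\cref{thm:general_pattern}) and reduces it to a coloring conjecture when $k$ is even and $\bfa$ is symmetric (\cref{thm:general_reduction}). Your proposal has the right overall shape—a torus construction with a polynomial phase, identification of a symmetry structure on $[k]$, and a reduction to a coloring question, with a dichotomy between an unconditional family and a conditional one. However, there is a decisive error and several places where the key ideas needed to make this work are missing.

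The error is the choice of phase degree. You propose $p(n) = n^{k-3}$, but a degree-$(k-3)$ polynomial phase $e(sn^{k-3}/N)$ has $U^{k-2}$ norm identically equal to $1$ (taking $k-2$ discrete derivatives of a degree-$(k-3)$ polynomial gives a constant). So for any $F$ with nontrivial $y$-dependence, your $f_N$ would not be $U^{k-2}$-uniform, and the entire construction collapses. The correct degree is $k-2$: the paper sets $f_N(n)=F(n/N, n^{k-2}/N)$ (\cref{prop:conv-a}). Degree $k-2$ is the unique sweet spot: high enough to give $U^{k-2}$-uniformity, low enough that a nontrivial linear constraint survives on the $y$-coordinates. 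Relatedly, your proposed remedy of lowering the degree further "to introduce more identities while still arranging $U^{k-2}$-uniformity" goes in exactly the wrong direction.

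Your concern that "for a generic pattern, the only degree-$(k-3)$ identity ... may be trivial" is also unfounded once the degree is fixed correctly. For \emph{every} $\bfa$ with strictly increasing entries, \cref{lem:a-binom-eq} gives the identity $\sum_{i=1}^k (x+a_i y)^{k-2}/c_i = 0$ with $c_i = \prod_{j\ne i}(a_i-a_j)$; this is the $\bfa$-binomial equation, and it is the constraint that replaces $\sum_i (-1)^i\binom{k-1}{i-1}y_i=0$ in the $k$-AP case. There is no "lattice of admissible phase choices" to classify: one phase and one identity suffice for all $\bfa$.

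Finally, the hard part of the argument is not identifying one canonical involution $\sigma$ but controlling \emph{all} trivial solutions of the $\bfa$-binomial equation, and your sketch does not engage with this. There are two failure modes: (a) subsets $I\subset[k]$ with $|I|\ge 3$ and $\sum_{i\in I}c_i^{-1}=0$, and (b) pairings $f$ of $[k]$ with $c_i=-c_{f(i)}$—and as \cref{rem:asym-pairing} shows, a pattern may admit pairings other than $i\mapsto k+1-i$, even when $\bfa$ is symmetric, and may admit pairings even when $\bfa$ is asymmetric. The paper handles this via \cref{claim:pairing}, which shows any pairing exhibits an ABAB sub-pattern, an asymmetric ABBA sub-pattern, or is the symmetric pairing, and via \cref{lem:mod-behrend-coloring}, which constructs colorings (a quadratic "paraboloid" coloring combined with a digit-wise congruence coloring and an error-correcting lemma) that kill the ABAB and asymmetric ABBA cases. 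Only the symmetric pairing survives as the residual coloring conjecture (\cref{conj:general-coloring}). Without this analysis, the reduction to a coloring question is not justified, and the claim that the unconditional family is exactly "$\sigma$ with a fixed point" is not the right dichotomy (a pairing is by definition fixed-point-free; the correct dichotomy is between the symmetric pairing and all other structures).
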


\begin{definition}
	We say that $\bfa = (a_1, \dots, a_k)$ is \emph{symmetric} if $k$ is even and $a_1 + a_k = a_2 + a_{k-1} = a_3 + a_{k-2} = \cdots$.
\end{definition}

When $k=4$ and $\bfa$ is a symmetric, an $\bfa$-AP corresponds to what is sometimes referred to as a ``parallelogram pattern.''
We prove \cref{conj:unifaap} in all cases except when $k$ is even and $\bfa$ is symmetric. (Although $k$ being even is already part of the definition of $\bfa$ being symmetric, we still remind the reader of the parity of $k$ for emphasis.)

\begin{theorem} \label{thm:general_pattern}
	Let $\bfa = (a_1, \dots, a_k)$ have increasing integer coordinates and $k \ge 4$.
	Suppose either $k$ is odd or $\bfa$ is not symmetric.
	Then there exists a constant $c > 0$ (depending on $\bfa$ only) such that $\rho_\bfa(\alpha) < \alpha^{c \log(1/\alpha)}$ for all $0 < \alpha < 1/2$.
\end{theorem}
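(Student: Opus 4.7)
The plan is to extend the construction underlying \cref{thm:kap-cons-odd} from $k$-APs to general patterns. The hypothesis (either $k$ odd or $\bfa$ non-symmetric) singles out exactly those patterns for which the ``reflection obstruction'' motivating \cref{conj:sckap} does not apply, so one can build a $U^{k-2}$-uniform counterexample directly, without passing through a coloring. The first step I would carry out is an algebraic non-degeneracy lemma: if $k$ is odd or $\bfa$ is non-symmetric, then for some degree $d$ with $2 \le d \le k-2$ there exist integer coefficients $c_1, \ldots, c_k$ so that
\[
F(n, h) := \sum_{i=1}^k c_i (n + a_i h)^d \in \ZZ[n,h]
\]
has favorable algebraic structure---specifically a genuine dependence on both $n$ and $h$ that is \emph{not} forced into the ``trivial form'' of depending on only one of the variables by reflection symmetry considerations. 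For symmetric even $\bfa$ one checks that reflection-symmetric $c_i = c_{k+1-i}$ with $\sum_i c_i = 0$ forces $F$ into such a degenerate form at the relevant low degrees (as in Gowers' 4-AP counterexample); the hypothesis of the theorem guarantees an escape from this at some $d \le k-2$, by exploiting either the distinguished middle index in the odd-$k$ case or the failure of the pairing $a_i + a_{k+1-i} = \text{const}$ in the non-symmetric even case.

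Using the non-degeneracy, I would then construct
\[
A_N = \big\{ n \in \ZZ/N\ZZ : (P_1(n)/N \bmod 1, \ldots, P_m(n)/N \bmod 1) \in S \big\},
\]
where $P_1, \ldots, P_m$ are integer polynomials of degrees $\ge k-2$ incorporating the non-degenerate combination from Step 1, and $S \subset [0,1)^m$ is a Jordan measurable set of volume $\alpha$ built in an iterated Behrend--Ruzsa fashion with $\Theta(\log(1/\alpha))$ nested levels. Weyl equidistribution for polynomial orbits of degree $\ge k-2$ gives that $A_N$ is $U^{k-2}$-uniform and has density $\alpha + o(1)$. The non-degenerate identity $F$ from Step 1 forces the joint distribution of the polynomial values at the $k$ points of an $\bfa$-AP to concentrate on a proper algebraic subvariety of $\TT^{mk}$; iterating this codimension savings through the $\Theta(\log(1/\alpha))$ layers of $S$ yields the claimed $\bfa$-AP density $\alpha^{c \log(1/\alpha)}$.

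The main obstacle---what the paper calls ``additional technical difficulties''---is Step 1 in full generality. For $k$-APs the relevant identities unwind from standard binomial/Vandermonde calculations on $\bfa = (0, 1, \ldots, k-1)$, but for an arbitrary non-symmetric $\bfa$ one must locate the precise degree $d \le k-2$ where the reflection structure of $\bfa$ fails, produce an explicit witnessing combination $c$, and verify that the resulting $F$ carries enough nontrivial joint $(n, h)$-dependence to drive the $\bfa$-AP count down. Any relation of degree $d \ge k-1$ corresponds to a polynomial phase already controlled by the $U^{k-2}$-uniformity hypothesis and so is useless for the construction; ruling this out uniformly over $\bfa$ is the core algebraic content of the argument. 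A secondary technical point is propagating the $U^{k-2}$-uniformity through the iterated Behrend--Ruzsa construction without quantitative loss at each of the $\Theta(\log(1/\alpha))$ levels.
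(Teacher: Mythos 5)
Your Step 1 is aimed at the wrong target and would not repair into the paper's argument. The paper does not search over degrees $2\le d\le k-2$ for a degree where reflection symmetry ``fails''; it always works with the single degree $d=k-2$ and the single canonical set of coefficients $c_i=\prod_{j\ne i}(a_i-a_j)$, for which the identity $\sum_{i}(x+a_iy)^{k-2}/c_i=0$ holds for \emph{every} $\bfa$ (this is \cref{lem:a-binom-eq}, a pure Vandermonde computation, with no case distinction on symmetry). The identity is the degenerate zero one, not a ``non-degenerate'' $F\ne0$; the degree $k-2$ is forced because that is what makes $n\mapsto(n/N,n^{k-2}/N)$ both $U^{k-2}$-uniform and governed by the correct $y$-constraint in $\wt\Lambda_\bfa$, so there is no latitude to drop to a smaller $d$. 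Consequently your ``core algebraic content''---ruling out $d\ge k-1$ uniformly over $\bfa$---is not where the difficulty lies.

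The actual technical bottleneck, which your proposal does not identify, is the classification of \emph{trivial} solutions to the $\bfa$-binomial equation $\sum_i n_i/c_i=0$ and in particular the structure of \emph{pairings} of $[k]$ (maps $f$ with $f\circ f=\mathrm{id}$, $f(i)\ne i$, and $c_i=-c_{f(i)}$). The paper's \cref{claim:pairing} proves that every pairing either (1) has a crossing $i_1<i_2<i_3<i_4$ with $f(i_1)=i_3$, $f(i_2)=i_4$ (an ABAB pattern); or (2) has a nested $i_1<i_2<i_3<i_4$ with $f(i_1)=i_4$, $f(i_2)=i_3$ but $a_{i_1}+a_{i_4}\ne a_{i_2}+a_{i_3}$ (an asymmetric ABBA pattern); or (3) is the symmetric pairing $f(i)=k+1-i$, which can only occur when $\bfa$ is symmetric. \cref{lem:mod-behrend-coloring} then builds a coloring of $\Z/N\Z$ using the quadratic function $n_1^2+\cdots+n_m^2$ on base-$M$ digits (together with an error-correcting mod-$a!$ digit coloring for wrap-around) that kills all ABAB and asymmetric ABBA patterns simultaneously; combined with a Behrend coloring for the $|I|\ge3$ monochromatic case, this yields via \cref{lem:gen-a-ap-coloring} the coloring of $\TT$ needed for \cref{lem:gen-a-ap-construction}. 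None of this appears in your outline, and without \cref{claim:pairing} there is no route to handling the non-symmetric even case: unlike in the $k$-AP setting, for general $\bfa$ the pairings are not canonical (see \cref{rem:asym-pairing}: asymmetric tuples such as $(1,2,10,16,17,20)$ admit pairings), so you cannot reduce to the symmetric pairing and must argue structurally.

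Your Step 2 is also not what the paper does. There is no higher-dimensional $[0,1)^m$ with ``iterated Behrend--Ruzsa nested levels''; the set lives in $\TT^2$ and is defined by a coloring $\Phi\colon\TT\to[r]$ together with a single Ruzsa-type subset $S\subset\Z/m\Z$ avoiding non-trivial solutions to the $\bfa$-binomial equation (\cref{lem:avoid-nontriv}), with $\wt\Lambda_\bfa(1_A)$ bounded by (binomial-pattern density)$\times m^{-k+1}$. The $\alpha^{c\log(1/\alpha)}$ exponent comes from $N=r^{\Theta(\log r)}$ in the Behrend bound versus $m=\Theta(r^{k-1})$ in Ruzsa's lemma, not from iterating codimension savings through nested layers.
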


When $k$ is even and $\bfa$ is symmetric, our results are analogous to those for $k$-APs with $k$ even, except that instead of finding an $N^{o(1)}$-coloring of $[N]$ avoiding symmetrically colored $k$-APs, we need an $N^{o(1)}$-coloring of $[N]$ avoiding symmetrically colored $\bfa$-APs. Here a \emph{symmetrically colored $\bfa$-AP} is some $n + a_1 d, \dots, n + a_k d$ where $n+a_id$ and $n+a_{k-i+1}d$ have the same color for each $i =1, \dots, k/2$.

\begin{conjecture}[Avoiding symmmetrically colored $\bfa$-APs] \label{conj:general-coloring}
Let $k \ge 4$ be even.
Let $\bfa = (a_1, \dots, a_k)$ be symmetric with increasing integer coordinates.
Then there exists an $N^{o(1)}$-coloring of $[N]$ without symmetrically colored $\bfa$-APs.
\end{conjecture}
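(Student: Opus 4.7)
The plan is to reduce the general symmetric case to a single four-term case and then attack it using the same ideas one would try for \cref{conj:sc4ap}. Suppose $\bfa = (a_1, \ldots, a_k)$ is symmetric with $a_i + a_{k-i+1} = s$ for all $i$. The key observation is that the sub-quadruple $\bfa' \coloneqq (a_1, a_2, a_{k-1}, a_k)$ is itself a symmetric 4-tuple (its two pairs again sum to $s$, and its coordinates are strictly increasing since $k \ge 4$), and any symmetrically colored $\bfa$-AP with common difference $d$ automatically restricts to a symmetrically colored $\bfa'$-AP with the same $d$. Hence it suffices to find an $N^{o(1)}$-coloring of $[N]$ avoiding symmetrically colored $\bfa'$-APs for the single symmetric 4-tuple $\bfa'$. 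After translation, $\bfa'$ takes the form $(0, b, c, b+c)$ with $0 < b < c$, and \cref{conj:sc4ap} is precisely the instance $(b, c) = (1, 2)$, so this reduction makes Conjecture~\ref{conj:general-coloring} a genuine generalization of Conjecture~\ref{conj:sc4ap}.

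The reduced task is to construct, for each fixed symmetric 4-tuple $(0, b, c, b+c)$, an $N^{o(1)}$-coloring $f$ of $[N]$ such that there is no $d \ne 0$ and no $n$ satisfying both $f(n) = f(n + (b+c)d)$ and $f(n+bd) = f(n+cd)$. A uniformly random $r$-coloring combined with the Lov\'asz local lemma already yields such a coloring with $r = O(N^{1/2})$ (analogously to the $O(N^{2/k})$ bound noted by the authors after \cref{conj:sckap}), but this is very far from $N^{o(1)}$. To shrink $r$ further, I would try composite colorings $f(n) = (\chi_1(n), \ldots, \chi_\ell(n))$ in which each $\chi_j$ is a small-range arithmetic invariant (for example a residue modulo a small prime, a low-degree polynomial evaluation, or a Behrend-type coordinate), chosen so that each equality $\chi_j(x) = \chi_j(y)$ forces $x - y$ into a restricted set, and arranged so that the two forced gaps $(b+c)d$ and $(c-b)d$ cannot lie simultaneously in those restricted sets for any $d \ne 0$. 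In parallel, I would run computer searches for small explicit examples on $\Z/N\Z$ along the lines of the 3-coloring used in \cref{cor:quant-bounds-4AP}, and then attempt to amplify any such small example via a tensor power construction of the sort already deployed in the excerpt.

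The main obstacle is exactly the one flagged by the authors in their remark after \cref{conj:sc4ap}: within any single color class the coloring is forced to be $\bfa'$-AP-free, yet essentially all known dense $\bfa'$-AP-free sets are Behrend-type, and Behrend-type sets are ill-suited for simultaneously controlling the pair of \emph{symmetric} completion constraints required here across color classes. I therefore expect that any successful resolution of \cref{conj:general-coloring} will either require a genuinely new source of dense pattern-free sets, or a new combinatorial mechanism that forces symmetric completions to fail in many color classes at once; either way, the same idea should resolve \cref{conj:sc4ap} as a special case.
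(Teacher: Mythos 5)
This statement is Conjecture~\ref{conj:general-coloring}, which the paper explicitly leaves open: it is stated as a conjecture, and the paper's surrounding results (\cref{thm:general_reduction}, \cref{thm:even-k-converse}) only show what would \emph{follow} from it, not that it holds. There is therefore no proof in the paper to compare against.

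Your opening reduction is correct and matches the paper's own remark: for symmetric $\bfa=(a_1,\dots,a_k)$, the sub-quadruple $\bfa'=(a_1,a_2,a_{k-1},a_k)$ is a symmetric $4$-tuple, and a symmetrically colored $\bfa$-AP restricts to a symmetrically colored $\bfa'$-AP with the same common difference, so it suffices to handle symmetric $4$-tuples $(0,b,c,b+c)$. This is precisely the content of the paper's observation that Conjecture~\ref{conj:general-coloring} for $k=4$ implies it for all even $k\ge 4$. However, beyond that reduction your proposal does not contain a proof: the Lov\'asz local lemma gives only $O(N^{1/2})$ colors, the tensor-power step (\cref{lem:power}) amplifies an existing small coloring but cannot manufacture one, and ``try composite colorings'' and ``run computer searches'' are a research program rather than an argument. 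You also correctly restate, in your final paragraph, the very obstruction the authors flag after \cref{conj:sc4ap} (Behrend-type color classes seem ill-suited to blocking the symmetric completion constraints), which is why this remains a conjecture. In short: your reduction is sound and agrees with the paper's, but neither you nor the paper proves the statement, and you should be clear in your own write-up that you have reduced the conjecture to the $4$-tuple case rather than resolved it.
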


Note that \cref{conj:general-coloring} for $k=4$ implies the conjecture for all even $k \ge 4$. Here is our most general reduction result.

\begin{theorem} \label{thm:general_reduction}
Let $k \ge 4$ be even.
Let $\bfa = (a_1, \dots, a_k)$ be symmetric with increasing integer coordinates.
Then \cref{conj:general-coloring} for this $\bfa$ implies \cref{conj:unifaap} for the same $\bfa$.
\end{theorem}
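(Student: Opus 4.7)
The plan is to follow the architecture of the proof of \cref{thm:kap-reduction-even} for the standard $k$-AP case, replacing the pattern $(0, 1, \dots, k-1)$ with $\bfa$ throughout. Given a coloring $\chi \colon \ZZ/M\ZZ \to [r]$ provided by \cref{conj:general-coloring} (with $r = M^{o(1)}$), we construct a sequence of $U^{k-2}$-uniform sets $A_N \subset \ZZ/N\ZZ$ via the torus formalism of \cref{sec:torus-formalism} with density $\alpha \approx 1/r$ and $\bfa$-AP density bounded by $\alpha^C$ for any fixed $C$, as $M \to \infty$.

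The construction takes the form
\[
A_N = \{n \in \ZZ/N\ZZ : (n/N \bmod 1,\ Q(n)/N \bmod 1) \in S_\chi\},
\]
where $Q$ is a polynomial of degree $k-2$ chosen so that the orbit $n \mapsto (n/N, Q(n)/N)$ equidistributes at the required level, and $S_\chi \subset [0,1)^2$ is built from $\chi$ so that the fiber of $S_\chi$ over each $x \in [0,1)$ has measure $\alpha$. The $U^{k-2}$-uniformity of $A_N$ then follows by a standard polynomial equidistribution argument: since $\deg Q = k-2$, the phase $Q(n)/N$ averages out against any degree-$(k-3)$ polynomial structure, and $U^{k-2}$ is controlled by such objects.

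To bound $\Lambda_{\bfa}(1_{A_N})$, I would exploit the symmetry $a_i + a_{k-i+1} = s$ of $\bfa$ (constant in $i$). Setting $b_i = a_i - s/2$ so that $b_{k-i+1} = -b_i$, the paired sums $Q(n + a_i d) + Q(n + a_{k-i+1} d)$ are polynomials in $n+sd/2$ and $b_i d$ that are even in $b_i d$; in particular, they depend on $i$ only through the symmetric invariants $b_i^2$, never separating $i$ from $k-i+1$. After setting up $S_\chi$ to encode this pairing, the presence of a non-trivial $\bfa$-AP in $A_N$ forces the $\chi$-colors of $n+a_1 d, \dots, n+a_k d$ to be symmetrically matched, i.e., to form a symmetrically colored $\bfa$-AP. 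Since $\chi$ avoids such patterns by hypothesis, only degenerate $\bfa$-APs contribute significantly to $\Lambda_{\bfa}(1_{A_N})$, and together with $r = M^{o(1)}$ this yields $\rho_{\bfa}(\alpha) < \alpha^C$ for $\alpha \approx 1/r$ arbitrarily small.

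The main obstacle, compared to the plain $k$-AP case of \cref{thm:kap-reduction-even}, is adapting both the polynomial $Q$ and the geometry of $S_\chi$ to the pattern $\bfa$. The paired sums $Q(n+a_i d) + Q(n+a_{k-i+1}d)$ carry $i$-dependent offsets proportional to $b_i^2 = (a_i - s/2)^2$, which are absent in the canonical $k$-AP case and which must be absorbed into the definition of $S_\chi$ (for instance, by replacing ``vertical color strips'' with $\bfa$-dependent shifts of those strips). Verifying that this adjustment still yields a clean reduction to the existence of symmetrically colored $\bfa$-APs in $\chi$---rather than to some more elaborate coloring constraint tied to the invariants $b_i^2$---is the technical heart of the argument; once this matching is set up correctly, the equidistribution, count bound, and amplification of $r = M^{o(1)}$ into $\alpha^C$ for arbitrary $C$ follow by the same template as in the $k$-AP case.
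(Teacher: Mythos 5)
Your overall architecture matches the paper's: build a function $F\colon\TT^2\to[0,1]$ (equivalently a set $A_N = \{n : (n/N, n^{k-2}/N) \in S_\chi\}$), verify $U^{k-2}$-uniformity via polynomial equidistribution, count $\bfa$-APs via the constraint on the second coordinate, and amplify $r = M^{o(1)}$ to arbitrarily large exponent $C$ via a tensor power. However, there is a genuine gap in the step you flag as ``the technical heart,'' and the $b_i^2$-invariance heuristic you offer does not close it.

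The constraint on the second coordinates of an $\bfa$-AP in $A_N$ is the $\bfa$-binomial equation $\sum_{i=1}^k y_i/c_i = 0$, where $c_i = \prod_{j\neq i}(a_i - a_j)$ (the paper's \cref{lem:a-binom-eq}). Your reduction implicitly assumes that, once the underlying set avoids non-trivial solutions to this equation, the only constraint that can survive is the symmetric matching $y_i = y_{k+1-i}$, so that a surviving $\bfa$-AP forces a symmetrically colored $\bfa$-AP in $\chi$. That assumption is false in general, even for symmetric $\bfa$. \Cref{rem:asym-pairing} gives the symmetric pattern $\bfa=(1,2,3,6,7,8)$ with $c(\bfa)=(-420,120,-120,120,-120,420)$, which admits a \emph{non-symmetric} pairing pairing $2\leftrightarrow 5$ and $3\leftrightarrow 4$ is not what you'd get, but pairing $2\leftrightarrow 3$ and $4\leftrightarrow 5$ works since $c_2 = -c_3$ and $c_4 = -c_5$. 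Moreover, trivial solutions need not come from pairings at all: any $I\subset[k]$ with $|I|\ge 3$ and $\sum_{i\in I} c_i^{-1}=0$ contributes. Your argument pairing $Q(n+a_id)$ with $Q(n+a_{k-i+1}d)$ and invoking symmetry in $b_i = a_i - s/2$ speaks to the equidistribution of the orbit, not to which subsets of the $c_i^{-1}$ sum to zero, so it does not rule out these extra trivial solutions.

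The paper closes exactly this gap: \cref{claim:pairing} shows every non-symmetric pairing of $\bfa$ contains either an ABAB sub-pattern or an asymmetric ABBA sub-pattern on four indices; \cref{lem:mod-behrend-coloring} constructs an efficient coloring (quadratic form composed with a digit-wise ``error-correcting'' coloring via \cref{lem:error-correct}) that kills all such 4-point patterns; and \cref{lem:avoid-pattern} supplies a Behrend coloring that kills the $|I|\ge 3$ zero-sum case. The product of these auxiliary colorings with your hypothesized $\chi$ (the one avoiding symmetrically colored $\bfa$-APs) is what is fed into the torus construction in \cref{lem:gen-a-ap-coloring,lem:gen-a-ap-construction}. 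Without the auxiliary colorings, your construction could still have many non-trivial $\bfa$-APs arising from non-symmetric pairings or zero-sum subsets, and the reduction to \cref{conj:general-coloring} would not go through.
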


\vspace{0.5em}
\noindent\textbf{Acknowledgements:} We thank Zach Hunter for helpful discussions about \cref{lem:power} and colorings without symmetrically colored 4-APs. We thank Josef Greilhuber for helpful discussions around the constructions in \cref{rem:asym-pairing}.

\section{Constructing uniform sets}
\label{sec:torus-formalism}

As is standard in additive combinatorics, there is no often substantive difference between considering subsets $A\subseteq \ZZ/N\ZZ$ versus functions $f \colon \Z/N\Z\to [0,1]$ for large $N$, as $f$ can be turned into $A$ via sampling, and all relevant quantities concentrate accordingly, such as $\Lambda_k(1_A) \approx \Lambda_k(f)$. 
We will primarily work with functions from now on.

Let $\TT \coloneqq \RR/\ZZ$ be the circle. 
We say that a Riemann integrable function $F$ on $\TT^2$ has \emph{constant first marginal $\alpha$} if $\int_\TT  F(x,y)\,\text dy$ is the constant $\alpha$ for almost every $x$.
Starting with some fixed Riemann integrable $F \colon \TT \to [0,1]$, consider $f_N \colon \ZZ/N\ZZ \to [0,1]$ defined by 
\begin{equation} \label{eq:fN}
f_N(n) = F(n/N \bmod 1, n^{k-2}/N \bmod 1).
\end{equation}
This can then be turned into a subset $A_N \subset \Z/N\Z$ by sampling if one wishes.
We can deduce the following asymptotics using standard Weyl polynomial equidistribution arguments (see \cref{sec:equi-conv} for proof sketches).
They tell us that our sequence is $U^{k-2}$-uniform and has $k$-AP density approaching $\wt \Lambda_k(F)$ defined below. 
This gives a broad and natural class of constructions for Ruzsa's question and its extensions.\footnote{Previous constructions of Gowers~\cite{Gow20} and Wolf~\cite{Wol10} showing $\rho_4(1/2)<1/16$ have this form with $F(x,y)=\frac12+\frac18\left(e^{6\pi i y}+e^{2\pi i y}+e^{-2\pi i y}+e^{-6\pi iy}\right)f(x)$ for some explicit $f\colon\TT\to\{-1,0,1\}$.}

\begin{proposition}[4-AP special case]
Given a Riemann integrable $F \colon \TT^2 \to [0,1]$,
define $f_N$ as in \cref{eq:fN}. If $F$ has constant first marginal $\alpha$, then, as $N \to \infty$,
\[
\norm{f_N - \alpha}_{U^2} \to 0
\]
and
\[
\Lambda_4(f_N) 
\to 
\EE_{\substack{x_1, x_2, x_3, x_4, y_1, y_2, y_4, y_4 \in \TT: \\ x_2 - x_1 = x_3 - x_2 = x_4 - x_3 \\ \text{ and } y_1 - 3y_2 + 3y_3 - y_4=0}}  
F(x_1, y_1)
F(x_2, y_2)
F(x_3, y_3)
F(x_4, y_4).
\]
\end{proposition}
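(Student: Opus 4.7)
The main tool is Weyl's equidistribution theorem for two-parameter polynomial sequences into a torus. The key observation is that the polynomial map
\[
\Phi_N\colon (n,d) \mapsto \bigl((n+id)/N \bmod 1,\ (n+id)^2/N \bmod 1\bigr)_{i=0,1,2,3}
\]
from $(\Z/N\Z)^2$ into $\T^8$ equidistributes, as $N \to \infty$, on the subtorus $V_4 \subset \T^8$ cut out by the integer-linear relations satisfied by its coordinate polynomials. An integer character $e^{2\pi i (\sum a_i x_i + \sum b_i y_i)}$ of $\T^8$ restricts to the constant $1$ on $V_4$ precisely when $\sum_i a_i (n+id) + \sum_i b_i (n+id)^2$ is the zero polynomial in $\Z[n,d]$; any other character pulls back to an exponential sum $\E_{n,d} e^{2\pi i P(n,d)/N}$ for a fixed nonzero polynomial $P$ of degree at most $2$, which tends to $0$ as $N \to \infty$ by standard Weyl--van der Corput estimates.

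To identify $V_4$, note that the degree-$1$ polynomials $(n+id)$ for $i=0,\dots,3$ lie in $\mathrm{span}\{n,d\}$, a $2$-dimensional space, so the four $x_i$ coordinates satisfy two independent linear relations equivalent to $x_1,x_2,x_3,x_4$ being in arithmetic progression. The degree-$2$ pieces $(n+id)^2 = n^2 + 2ind + i^2d^2$ lie in the $3$-dimensional $\mathrm{span}\{n^2,nd,d^2\}$, so the four $y_i$ coordinates satisfy exactly one relation, namely the vanishing of the third finite difference $y_1 - 3y_2 + 3y_3 - y_4$. Degree separation forbids cross-relations. Hence $V_4$ is precisely the variety described in the proposition. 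Applying equidistribution to trigonometric-polynomial approximations of the Riemann-integrable integrand $\prod_{i} F(x_i, y_i)$ and sandwiching between upper and lower approximants (valid because $F$'s discontinuity set has measure zero) yields $\Lambda_4(f_N) \to \int_{V_4} \prod_i F(x_i, y_i)$.

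For the Fourier-uniformity claim, I would use the identity $\|g\|_{U^2}^4 = \E_{n,h_1,h_2} \prod_{\omega \in \{0,1\}^2} g(n + \omega_1 h_1 + \omega_2 h_2)$ with $g = f_N - \alpha$. The analogous three-parameter polynomial map
\[
\Psi_N\colon (n,h_1,h_2) \mapsto \bigl((n+\omega_1 h_1 + \omega_2 h_2)/N,\ (n+\omega_1 h_1 + \omega_2 h_2)^2/N\bigr)_{\omega \in \{0,1\}^2}
\]
equidistributes on a subtorus $V_\square \subset \T^8$. The same dimension count shows that $V_\square$ is cut out by the single parallelogram relation $x_{00} - x_{10} - x_{01} + x_{11} = 0$ among the $x$-coordinates, while the four polynomials $(n+\omega_1 h_1 + \omega_2 h_2)^2$ are linearly independent in $\mathrm{span}\{n^2,nh_1,nh_2,h_1^2,h_1h_2,h_2^2\}$, leaving all four $y_\omega$ coordinates free on $V_\square$. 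Consequently
\[
\|f_N-\alpha\|_{U^2}^4 \to \int_{V_\square} \prod_\omega \bigl(F(x_\omega, y_\omega) - \alpha\bigr),
\]
and since the $y_\omega$'s are independent on $V_\square$, integrating them out first yields $\prod_\omega \bigl(\int_\T F(x_\omega, y)\,dy - \alpha\bigr) = 0$ by the first-marginal hypothesis.

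The main technical hurdle is promoting Weyl equidistribution from trigonometric polynomials to Riemann-integrable $F$, which is handled by the standard upper/lower approximation. A conceptual point worth flagging is the contrast between $V_4$ and $V_\square$: the $y$-coordinates obey a nontrivial relation on $V_4$ but are \emph{unconstrained} on $V_\square$. This asymmetry is exactly why $U^2$-uniformity comes for free under the first-marginal hypothesis, whereas the $\Lambda_4$ limit retains enough flexibility to deviate from $\alpha^4$ --- reflecting why Fourier uniformity does not control $4$-AP counts and making this class of examples natural for Ruzsa's question.
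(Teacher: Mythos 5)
Your proposal is correct and is essentially the same argument as the paper's: both reduce, after approximating the Riemann-integrable $F$ by smooth functions and expanding in Fourier series, to testing individual characters $e(\sum a_ix_i+\sum b_iy_i)$ against the polynomial orbit, and both identify the limiting relations by linear algebra on the coefficient vectors of $n+id$ and $(n+id)^2$ (your "degree separation plus dimension counting" is the paper's observation that the system $\sum_j s_j j^t=0$, $0\le t\le k-2$, is a full-rank Vandermonde system). Your treatment of the $U^2$ claim is presented a bit more cleanly — you first identify $V_\square$ and note all $y_\omega$ are free, then integrate them out and invoke the marginal hypothesis once — whereas the paper argues phase by phase (if any $s_j\ne 0$ the Weyl sum vanishes and the corresponding first marginal is zero), but these compute the identical limit.
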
 

\begin{proposition}\label{prop:conv}
For $k\geq 4$, given a Riemann integrable $F \colon \TT^2 \to [0,1]$,
define $f_N$ as in \cref{eq:fN}.
If $F$ has constant first marginal $\alpha$, then, as $N \to \infty$, 
\[
\norm{f_N - \alpha}_{U^{k-2}} \to 0
\]
and
\[
\Lambda_k(f_N) \to \wt\Lambda_k(F),
\]
where
\[
\wt\Lambda_k(F) \coloneqq \EE_{(x_1, \dots, x_k, y_1, \dots, y_k) \in V} F(x_1, y_1) \cdots F(x_k, y_k)
\]
where $V$ is the subset of $\TT^{2k}$ defined by all points $(x_1, \dots, x_k, y_1, \dots, y_k)$ satisfying
\[
x_2 - x_1 = \cdots = x_k - x_{k-1}
\]
and
\[
\sum_{i=0}^{k-1} (-1)^i\binom{k-1}{i} y_i = 0.
\]
\end{proposition}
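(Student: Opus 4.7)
The plan is to combine a Fourier decomposition of $F$ with Weyl's polynomial equidistribution theorem on $\Z/N\Z$. I would start by Fourier-expanding $F(x, y) = \sum_{r, s \in \Z} \widehat F(r, s)\, e^{2\pi i(ry + sx)}$ on $\TT^2$; since $F$ is Riemann integrable, a Fej\'er-type smoothing reduces everything to the case when $F$ is a trigonometric polynomial (the remainder is controlled in $L^1$, which in turn controls both $\|{\cdot}\|_{U^{k-2}}$ and $\Lambda_k$ up to bounded multiplicative constants since all functions are $[0,1]$-valued). The constant first marginal hypothesis gives $\widehat F(0, 0) = \alpha$ and $\widehat F(0, s) = 0$ for $s \ne 0$, so the Fourier support of $F - \alpha$ lies in $\{r \ne 0\}$ and hence
\[f_N(n) - \alpha = \sum_{r \ne 0,\,s} \widehat F(r, s)\, e^{2\pi i(rn^2 + sn)/N}.\]

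For the $U^{k-2}$ uniformity claim, I would expand $\|f_N - \alpha\|_{U^{k-2}}^{2^{k-2}}$ directly from the definition as an average over $(n, h_1, \ldots, h_{k-2})$, substitute the Fourier series, and interchange sum and expectation. This produces a finite sum indexed by tuples $(r_\omega, s_\omega)_{\omega \in \{0, 1\}^{k-2}}$ with each $r_\omega \ne 0$, of terms of the form $\prod_\omega \widehat F(r_\omega, s_\omega)^{(|\omega|)} \cdot \E_{n, h}\, e^{2\pi i P(n, h)/N}$, where
\[P(n, h) = \sum_{\omega \in \{0, 1\}^{k-2}} (-1)^{|\omega|}\bigl[r_\omega(n + \omega \cdot h)^2 + s_\omega(n + \omega \cdot h)\bigr]\]
is a polynomial of degree at most two in $(n, h_1, \ldots, h_{k-2})$ with bounded integer coefficients. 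By Weyl's inequality for polynomial exponential sums, each such inner expectation is $o(1)$ as $N \to \infty$ unless $P \equiv 0$ as a polynomial in $(n, h)$. Setting every monomial coefficient of $P$ equal to zero gives a linear system on the $r_\omega$ (and separately on the $s_\omega$), and one must show this system admits no solution with every $r_\omega \ne 0$. Summing the resulting $o(1)$ estimates over the finite index set then produces $\|f_N - \alpha\|_{U^{k-2}} \to 0$.

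For the density convergence, the analogous expansion gives
\[\Lambda_k(f_N) = \sum_{(r_i, s_i)_{i=0}^{k-1}} \prod_i \widehat F(r_i, s_i) \cdot \E_{n, d}\, e^{2\pi i \sum_i [r_i(n + id)^2 + s_i(n + id)]/N}.\]
The inner expectation is $\E_{n, d}\, e^{2\pi i Q(n, d)/N}$ where $Q(n,d) = (\sum r_i) n^2 + 2(\sum i r_i) nd + (\sum i^2 r_i) d^2 + (\sum s_i) n + (\sum i s_i) d$; by Weyl equidistribution applied to the polynomial orbit $(n, d, n^2, nd, d^2)/N \bmod 1$ in $\TT^5$, this tends to $\mathbbm{1}$ of the event that $Q \equiv 0$ identically, i.e.\ each of $\sum r_i$, $\sum i r_i$, $\sum i^2 r_i$, $\sum s_i$, $\sum i s_i$ vanishes in $\Z$. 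Identifying these five linear constraints with the Fourier-dual description of the defining equations of $V \subset \TT^{2k}$, the surviving terms in the sum reassemble into $\widetilde\Lambda_k(F)$ as stated.

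The principal obstacle is the combinatorial linear-algebra check in the $U^{k-2}$ argument: one must verify that no nontrivial tuple $(r_\omega)$ with every $r_\omega \ne 0$ satisfies the full system making the quadratic part of $P$ vanish. This is a rigidity statement about the span of low-degree monomial functions on the Boolean cube $\{0,1\}^{k-2}$, and amounts to a direct finite-difference calculation for $k = 4$ but requires a more careful accounting for larger $k$. A parallel finite-dimensional calculation underlies the identification of the Weyl-equidistribution constraints with the variety $V$ in the density part, reflecting how a quadratic-in-$i$ function $y_i = (n + id)^2/N$ determines (and is determined by) its weighted moments $\sum r_i y_i$ under integer weights.
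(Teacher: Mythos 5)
Your proposal follows the same overall strategy as the paper: reduce to trigonometric polynomials by $L^1$-approximation, expand in Fourier phases, and invoke Weyl equidistribution so that each exponential-sum coefficient survives the $N\to\infty$ limit exactly when a polynomial identity holds, which you then match against the defining relations of $V$. The $\Lambda_k$ analysis is essentially identical to the paper's.

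The one place you diverge, and where you leave a flagged gap, is the $U^{k-2}$ part. You work directly with $f_N-\alpha$, observe that its Fourier support lies in $\{r\ne 0\}$, and therefore need the claim: if
\[
P(n,h)=\sum_{\omega\in\{0,1\}^{k-2}}(-1)^{|\omega|}\bigl[r_\omega(n+\omega\cdot h)^{k-2}+s_\omega(n+\omega\cdot h)\bigr]\equiv 0,
\]
then some $r_\omega$ must vanish. You describe this as the ``principal obstacle'' and say it ``requires a more careful accounting for larger $k$.'' In fact the check is immediate: $(n+\omega\cdot h)^{k-2}$ is homogeneous of degree $k-2$ while the $s_\omega$-terms are degree $1$, so $P\equiv 0$ forces the degree-$(k-2)$ part to vanish separately; the monomial $h_1h_2\cdots h_{k-2}$ there receives a contribution only from $\omega=(1,\ldots,1)$, with coefficient $(-1)^{k-2}(k-2)!\,r_{(1,\ldots,1)}$, so $r_{(1,\ldots,1)}=0$, contradicting $r_\omega\ne 0$ for all $\omega$. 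With that observation your argument closes. The paper sidesteps the issue by proving a multilinear statement for general $F_\omega$: the Gowers inner product of the $f_{\omega,N}$ converges to the Gowers inner product of the \emph{first marginals} $F_\omega^1$; whenever a $y$-frequency $s_j$ is nonzero, the Weyl sum vanishes and so does $F_j^1$, so the two sides agree trivially, and the constant-marginal hypothesis then kills the limit. Both routes rely on the same rigidity of $\sum_\omega c_\omega(n+\omega\cdot h)^{k-2}$; the paper's framing just makes it unnecessary to argue about which $r_\omega$ must be zero.

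Two small notational slips to watch: you wrote $n^2$ where it should be $n^{k-2}$ (your formulas are the $k=4$ special case), and you should confirm your Fourier convention (you put $r$ on the $y$-variable) is used consistently when matching the Weyl constraints against the equations defining $V$.
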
 

The following definition captures the minimum possible $k$-AP density that can be obtained via such constructions.

\begin{definition} 
Let $\wt \rho_k(\alpha)$ denote the infimum of $\wt\Lambda_k(F)$ ranging over all Riemann integrable $F \colon \TT^2 \to [0,1]$ with constant first marginal $\alpha$.
\end{definition}

\begin{proposition}
\label{prop:set-torus-bound}
	$\rho_k(\alpha) \le \wt\rho_k(\alpha)$ for all $k$ and $\alpha$.
\end{proposition}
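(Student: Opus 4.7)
The plan is a standard sampling argument. Fix any Riemann integrable $F \colon \TT^2 \to [0,1]$ with constant first marginal $\alpha$, let $f_N \colon \ZZ/N\ZZ \to [0,1]$ be defined by \cref{eq:fN}, and apply \cref{prop:conv} to obtain $\|f_N - \alpha\|_{U^{k-2}} \to 0$ and $\Lambda_k(f_N) \to \wt\Lambda_k(F)$. I would then form a random subset $A_N \subseteq \ZZ/N\ZZ$ by independently including each $n$ with probability $f_N(n)$, and argue that with probability $1-o(1)$ such a realization is $U^{k-2}$-uniform and has $k$-AP density close to $\wt\Lambda_k(F)$. A deterministic realization then witnesses $\rho_k(\alpha) \le \wt\Lambda_k(F)$, and taking the infimum over $F$ gives the claim.

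The two key concentration estimates follow from direct expansion and independence. Write $g_N = 1_{A_N} - f_N$; the random variables $\{g_N(n)\}_{n \in \ZZ/N\ZZ}$ are mean-zero, bounded by $1$, and mutually independent. For real $g_N$ the Gowers norm expands as
\[
\|g_N\|_{U^{k-2}}^{2^{k-2}} = \EE_{n, h_1, \ldots, h_{k-2}} \prod_{\omega \in \{0,1\}^{k-2}} g_N(n - \omega \cdot h),
\]
and taking sampling expectation and swapping with the group average, the inner expectation $\EE \prod_\omega g_N(n - \omega \cdot h)$ vanishes whenever the $2^{k-2}$ vertices $n - \omega \cdot h$ are pairwise distinct. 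A coincidence forces $(\omega - \omega') \cdot h \equiv 0 \pmod N$ for some nonzero $\omega - \omega' \in \{-1, 0, 1\}^{k-2}$; since such a vector has a coordinate equal to $\pm 1$, this nontrivial linear equation has exactly $N^{k-3}$ solutions in $h$, so the total contribution from degenerate tuples is $O(N^{k-2}/N^{k-1}) = O(1/N)$. By Markov, $\|g_N\|_{U^{k-2}} \to 0$ in probability, and the triangle inequality together with $\|f_N - \alpha\|_{U^{k-2}} \to 0$ yields $\|1_{A_N} - \alpha\|_{U^{k-2}} \to 0$ in probability. An analogous but simpler calculation gives $\EE \Lambda_k(1_{A_N}) = \Lambda_k(f_N) + O(1/N)$, and a bounded-differences inequality (flipping a single element of $A_N$ changes $\Lambda_k(1_{A_N})$ by $O(1/N)$) then delivers $\Lambda_k(1_{A_N}) \to \wt\Lambda_k(F)$ in probability.

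A union bound produces, for each sufficiently large $N$, a deterministic realization $A_N$ satisfying both concentration events; this sequence witnesses $\rho_k(\alpha) \le \wt\Lambda_k(F)$, and the claim follows upon taking the infimum over $F$. The main technical content lies in the degenerate-tuple counting above, which is routine. I do not foresee any genuine obstacle, since the whole argument is the standard reduction from $[0,1]$-valued constructions to genuine indicator sets that is used throughout additive combinatorics.
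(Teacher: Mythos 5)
Your proof is correct and matches the approach the paper leaves implicit: the paper states \cref{prop:set-torus-bound} without a written proof, instead invoking the standard sampling principle announced at the start of \cref{sec:torus-formalism} (``$f$ can be turned into $A$ via sampling, and all relevant quantities concentrate accordingly''). Your degenerate-tuple count for $\|1_{A_N}-f_N\|_{U^{k-2}}$ and the bounded-differences argument for $\Lambda_k$ correctly supply the concentration estimates that the paper takes for granted.
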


\begin{conjecture}[Ruzsa's question restricted to above constructions]
\label{conj:wtunifkap}
	Let $k \ge 4$. 
	For every $C>0$, $\wt\rho_k(\alpha) < \alpha^C$ for all sufficiently small $\alpha$.
\end{conjecture}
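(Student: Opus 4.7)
The strategy is to resolve \cref{conj:wtunifkap} by passing through the coloring framework of this paper, split by the parity of $k$.

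For odd $k \ge 5$, the proof should proceed by verifying that a construction achieving the bound of \cref{thm:kap-cons-odd} can be realized inside the torus model \cref{eq:fN}. A natural candidate is $F(x,y) = g(y)$ for a suitably chosen Riemann integrable $g \colon \TT \to [0,1]$ with $\int g = \alpha$, combined with a tensor-power iteration. By \cref{prop:conv}, $\wt\Lambda_k(F)$ is an average of $\prod_i g(y_i)$ over the binomial hyperplane $\sum_{i=0}^{k-1}(-1)^i\binom{k-1}{i} y_i = 0$; its Fourier expansion reduces to $\sum_{m \in \ZZ} \prod_i \hat g\bigl(m(-1)^{i-1}\binom{k-1}{i-1}\bigr)$. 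When $k$ is odd, the binomial vector $((-1)^{i-1}\binom{k-1}{i-1})_i$ is symmetric under $i \leftrightarrow k+1-i$, and a careful choice of the Fourier support of $g$ (concentrating on frequencies $\pm\binom{k-1}{j}$ for a few values of $j$) can make the $m \ne 0$ contributions sum to a negative constant, beating the random baseline $\alpha^k$ by a constant factor. A tensor-power iteration over this single-step saving then boosts it to a super-polynomial $\wt\rho_k(\alpha) < \alpha^{c_k \log(1/\alpha)}$, which already dominates every polynomial $\alpha^C$ for small $\alpha$.

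For even $k \ge 4$, the plan is to mirror the reductions of \cref{thm:even-color-quan,thm:kap-reduction-even} inside the torus model. Starting from an $r$-coloring $c \colon \ZZ/N\ZZ \to [r]$ without symmetrically colored $k$-APs (the content of \cref{conj:sckap}), lift $c$ to a piecewise-constant coloring $\wt c \colon \TT \to [r]$ by identifying $\ZZ/N\ZZ$ with equal arcs of $\TT$, and set $F(x,y) = g_{\wt c(x)}(y)$ for functions $g_1, \dots, g_r \colon \TT \to [0,1]$ each of integral $\alpha$. Fourier expansion in $y$ reduces $\wt\Lambda_k(F)$ to $\alpha^k$ plus a sum over nonzero $m \in \ZZ$ of $\EE_{x\text{-AP}} \prod_i \hat g_{\wt c(x_i)}(m v_i)$, where $\mathbf{v} = ((-1)^{i-1}\binom{k-1}{i-1})_{i=1}^k$. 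Taking the $g_i$'s as small perturbations of $\alpha$ with color-dependent phase rotations at the frequencies $\pm v_i$, one shows by Gowers--Cauchy--Schwarz bookkeeping (paralleling \cref{thm:general_reduction}) that the absence of symmetrically colored $k$-APs in $c$ forces the nonzero-$m$ contributions to cancel in leading order, producing a bound of the form $\wt\rho_k(\alpha) \le C(r,N) \alpha^{(k-1)+(1/2)\log_r N}$, analogous to \cref{thm:even-color-quan}. Taking $r = N^{o(1)}$ pushes the exponent past every fixed $C$.

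The main obstacle is the one the authors flag explicitly: establishing \cref{conj:sckap} itself. Essentially all known large AP-free sets are Behrend-type, and these do not obviously glue into a coloring that additionally avoids symmetric pairings; this is a genuinely hard combinatorial problem, and it already suffices to treat $k=4$ since a symmetrically colored $k$-AP contains a symmetrically colored $4$-AP. Once the coloring is in hand, the Fourier bookkeeping above is quantitative but routine, essentially transcribing the proof of \cref{thm:general_reduction} into the torus model via \cref{prop:conv}. For the odd-$k$ branch, a secondary obstacle is confirming that the construction underlying \cref{thm:kap-cons-odd} admits a clean torus representation of the form \cref{eq:fN} and survives the tensor-power iteration while remaining valued in $[0,1]$.
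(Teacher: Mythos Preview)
First, note that \cref{conj:wtunifkap} is an open conjecture in the paper: it is proved only for odd $k$ (via \cref{thm:odd-torus}), while for even $k$ the paper shows it is \emph{equivalent} to the open \cref{conj:sckap} (\cref{cor:k-ap-equiv}). So any ``proof'' for even $k$ that bottoms out in \cref{conj:sckap} is not a proof at all --- it is exactly the reduction the paper already establishes. Your even-$k$ plan is therefore correct as a reduction but does not resolve anything.

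For odd $k$, your route diverges substantially from the paper's, and it has a real gap. The paper does \emph{not} use a Fourier perturbation $F(x,y)=g(y)$ plus a tensor-power iteration. Instead it builds $F=1_A$ combinatorially (\cref{lem:gen-k-ap-construction}): a Behrend-type coloring of $\TT$ (\cref{lem:avoid-pattern}, \cref{lem:disc-to-cont-k-ap}) controls the $x$-coordinate, while a set $S\subset\ZZ/m\ZZ$ avoiding non-trivial solutions to the $k$-binomial equation (\cref{lem:avoid-nontriv}) places the $y$-intervals. The super-polynomial exponent $c_k\log(1/\alpha)$ comes directly from the Behrend parameter relation $N=r^{\Theta(\log r)}$, not from iterating a constant-factor saving. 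Your tensor-power step is the problematic one: there is no evident product operation on functions $\TT^2\to[0,1]$ that multiplies $\wt\Lambda_k$ while preserving the constant-first-marginal constraint and the two-dimensional torus. The tensor power in the paper (\cref{lem:power}) lives at the level of colorings of $\ZZ/N\ZZ$, not of torus functions, and it is precisely this that you would have to reinvent. The one-shot Fourier saving you sketch (making the $m\ne 0$ terms negative) would, even if it works, only give $\wt\Lambda_k(F)\le c\alpha^k$ for a fixed $c<1$, which is far from $\alpha^{c_k\log(1/\alpha)}$ without a valid iteration.

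For even $k$, your construction sketch also differs from the paper's in a way that matters. You propose $F(x,y)=g_{\wt c(x)}(y)$ with the $g_i$'s small Fourier perturbations of the constant $\alpha$; the paper instead takes $F=1_A$ with $A=\{(x,y):y\in J_{\Phi(x)}\}$ where the intervals $J_j$ are placed according to a set $S\subset\ZZ/m\ZZ$ with no non-trivial solutions to the $k$-binomial equation. This second ingredient --- the binomial-equation-free set --- is essential and absent from your outline; it is what converts ``few symmetrically-colored APs'' into ``small $\wt\Lambda_k$''. Your claimed exponent $(k-1)+\tfrac12\log_r N$ is also off for $k\ge 6$: the paper obtains $(k-1)+(\log_r N)/(k-1)-\epsilon$ (\cref{thm:even-color-torus}), the loss coming from \cref{lem:avoid-nontriv} giving only $\beta_k\le k-1$ rather than $\beta_k=2$.
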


Note that $\wt\rho_k$ is monotonic, since if $\alpha<\alpha'$ and $F\colon\TT^2\to[0,1]$ has constant first marginal $\alpha'$, we see that $\tfrac{\alpha}{\alpha'}F$ has constant first marginal $\alpha$ and satisfies $\wt\Lambda_k(\tfrac{\alpha}{\alpha'}F)=\tfrac{\alpha^k}{\alpha'^k}\wt\Lambda_k(F)\ge \wt\Lambda_k(F)$.

Now we state our main results in terms of $\wt\rho_k$.

\begin{theorem}[Odd $k$] \label{thm:odd-torus}
	Let $k \ge 5$ be odd. There is some constant $c_k > 0$ so that for all $0 < \alpha < 1/2$,
	\[
	\wt\rho_k(\alpha) \le \alpha^{c_k \log(1/\alpha)}.
	\]
\end{theorem}

\begin{theorem}[$k=4$] \label{thm:4-torus}
Suppose there exists an $r$-coloring of $\ZZ/N\ZZ$ avoiding symmetrically colored 4-APs.
There exists a constant $C$ (depending on $r, N$), such that for all $0< \alpha < 1$,
\[
	\wt\rho_k(\alpha) \le C \alpha^{3 + (1/2) \log_r N}.
\]
\end{theorem}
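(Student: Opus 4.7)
My plan is to construct a Riemann integrable $F \colon \TT^2 \to [0,1]$ with constant first marginal $\alpha$ and $\wt\Lambda_4(F) \le C\alpha^{3+(1/2)\log_r N}$; by definition this implies $\wt\rho_4(\alpha) \le C\alpha^{3+(1/2)\log_r N}$. The construction is a tensor product of a ``coloring-perturbed quadratic ribbon'' base function $F_0$ at $L := \lfloor(1/2)\log_r N\rfloor$ widely separated scales. WLOG assume every color class of $c$ has size exactly $N/r$ (by passing to a suitable sub-coloring). Set $\alpha_0 := 1/r$ and define
\[
F_0(x,y) = \mathbf{1}\bigl[y - s(x) \in [0,\alpha_0) \bmod 1\bigr], \qquad s(x) := x^2 + 5\alpha_0\, c(\lfloor Nx\rfloor),
\]
which has constant first marginal $\alpha_0$ for every $x$. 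Applying \cref{prop:conv} and changing variables $\eta_i := y_i - s(x_i)$, the cubic constraint on $(y_i)$ becomes $\eta_1 - 3\eta_2 + 3\eta_3 - \eta_4 = -5\alpha_0 D(n,d)$, where $D(n,d) := c(n+3d) - 3c(n+2d) + 3c(n+d) - c(n)$ and $n$ is the discretization of the first coordinate of the 4-AP. For $|D(n,d)| \ge 1$ the residual has magnitude $\ge 5\alpha_0 > 4\alpha_0 = \max|\eta_1 - 3\eta_2 + 3\eta_3 - \eta_4|$ over $\eta_i \in [0,\alpha_0)$, killing the contribution; for $D(n,d) = 0$ the 4-AP contributes a constant multiple of $\alpha_0^3$. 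Thus
\[
\wt\Lambda_4(F_0) \le q^\ast \alpha_0^3 \cdot \PP_{n,d\in\ZZ/N\ZZ}[D(n,d) = 0]
\]
for some absolute constant $q^\ast > 0$.

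\emph{Key probabilistic estimate.} Every symmetrically colored 4-AP satisfies $D(n,d) = 0$, and by hypothesis no such 4-AP exists nontrivially, so the trivial $d=0$ contribution to $\PP[D = 0]$ is $1/N$. For the non-symmetric contribution, I plan to use a Fourier / Cauchy--Schwarz argument: the probability $\PP[D \equiv 0 \pmod r]$ expands as a character sum $\PP[D = 0] = \tfrac{1}{r}\sum_k\EE_{n,d}\,\chi(D(n,d))^k$ (for a nontrivial additive character $\chi$ of $\ZZ/r\ZZ$) and each nonzero summand is a correlation of four complex characters of $c$ evaluated along a 4-AP; these are controlled by the $U^3$-Gowers norm of the character-valued coloring via a generalized-von-Neumann inequality, and the no-symmetric-4-AP hypothesis gives $\PP[D=0] \le C_1/\sqrt N$.

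\emph{Tensor amplification and interpolation.} For widely separated scales $M_1 \ll M_2 \ll \cdots \ll M_L$ define $F(x,y) = \prod_{\ell=1}^L F_0(M_\ell x \bmod 1,\, M_\ell y \bmod 1)$. By an equidistribution extension of \cref{prop:conv} analogous to the proof sketches in \cref{sec:equi-conv}, as the $M_\ell$ diverge the first marginal of $F$ tends to $\alpha_0^L = r^{-L}$ and $\wt\Lambda_4(F) \to \wt\Lambda_4(F_0)^L$. Substituting the base estimate,
\[
\wt\Lambda_4(F) \le (q^\ast C_1)^L \alpha_0^{3L} N^{-L/2} = (q^\ast C_1)^L \alpha^{3+(1/2)\log_r N}
\]
at density $\alpha = r^{-L}$ (using $N^{-L/2} = \alpha^{(1/2)\log_r N}$). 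For densities $\alpha$ not of the form $r^{-L}$, interpolate using the rescaling $\wt\rho_4(\alpha) \le (\alpha/\alpha')^4 \wt\rho_4(\alpha')$; the resulting constant absorbs $(q^\ast C_1)^L$ and the interpolation factors into the $C = C(r, N)$ of the theorem statement.

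The main obstacle I anticipate is establishing the estimate $\PP[D(n,d) = 0] \le C_1/\sqrt N$ in the key probabilistic step. The hypothesis cleanly eliminates the trivial symmetric contribution, but controlling non-symmetric coincidences (where $D = 0$ occurs without the colors themselves pairing symmetrically) requires converting ``no symmetric 4-APs'' into a quantitative pseudorandomness statement about $c$. Extracting exactly the rate $1/\sqrt N$ — and not merely $1/r$ — is where the combinatorial strength of the hypothesis is fully used, and the generalized-von-Neumann inequality plus an inverse-type argument relating symmetric-4-AP avoidance to the $U^3$-norm of the coloring will be the technical heart of the proof.
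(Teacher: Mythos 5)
Your high-level plan --- perturb a ribbon by vertical shifts driven by a coloring, exploit the cancellation in the cubic binomial identity, then tensor --- is the right shape, and the tensor-power / interpolation step at the end is sound (it is morally the same as the paper's \cref{lem:power}). However, the heart of your argument, the ``key probabilistic estimate,'' is not a gap that can be filled; it is false.

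You want $\PP_{n,d\in\ZZ/N\ZZ}\bigl[D(n,d)=0\bigr]\leq C_1/\sqrt N$ where $D(n,d)=c(n)-3c(n+d)+3c(n+2d)-c(n+3d)$. But $D$ takes $O(r)$ possible values, so even for an idealized ``random'' $r$-coloring the event $D=0$ has probability $\Theta(1/r)$, and $r=N^{o(1)}$ in the regime of interest. The hypothesis that $c$ has no symmetrically colored $4$-APs kills only the sub-event $\{c(n)=c(n+3d),\,c(n+d)=c(n+2d)\}$; it says nothing about the many other ways four colors can satisfy one linear equation. Your proposed remedy --- bound the character sums by the $U^3$-norm of $\chi\circ c$ via generalized von Neumann --- does not help, because avoiding symmetric $4$-APs gives no $U^3$-pseudorandomness whatsoever: a coloring can be very far from $U^3$-uniform (indeed quite structured) and still have $4$-AP-free color classes. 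There is no inverse-type bridge between these two properties. A secondary issue: your shifts $5\alpha_0\,c(\lfloor Nx\rfloor)$ range over $\{5/r,\dots,5\}$, which wraps around the circle several times, so the clean ``$|5\alpha_0 D|>4\alpha_0$ kills the contribution'' argument already fails because the constraint is only mod $1$.

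The paper sidesteps all of this by never trying to control $\PP[D=0]$ at all. Instead of feeding the color values $c(\cdot)$ directly into the vertical shift, it first passes them through a Sidon-like gadget: \cref{lem:13avoid} produces an $r$-element set $S=\{s_1,\dots,s_r\}\subset\ZZ/m\ZZ$ with $m=O(r^2)$ such that $s_a-3s_b+3s_c-s_d\equiv 0\pmod m$ forces $a=d$ and $b=c$. The vertical intervals in the construction then start at $s_{\Phi(x)}/m$ (with $\Phi$ the circle coloring), and their lengths are taken small compared to $1/m$ so that the binomial constraint on the $y$-coordinates really does force $s_{\Phi(x_1)}-3s_{\Phi(x_2)}+3s_{\Phi(x_3)}-s_{\Phi(x_4)}\equiv 0\pmod m$ with no wrap-around ambiguity. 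By the $S$-property this gives $\Phi(x_1)=\Phi(x_4)$ and $\Phi(x_2)=\Phi(x_3)$, i.e.\ \emph{exactly} a symmetrically colored $4$-AP, which the coloring was built to have density $\lesssim N^{-\ell}$ (after an $\ell$-fold tensoring of the base coloring via \cref{lem:power} and a transfer to $\TT$ via \cref{lem:disc-to-cont-k-ap}). Because $m=O(r^{2\ell})$ is quadratic in the number of colors, the density comes out as $\alpha\gtrsim r^{-2\ell}$ while $\wt\Lambda_4\lesssim N^{-\ell}m^{-3}$, and this is precisely where the $\tfrac12\log_r N$ in the exponent comes from --- not from any $1/\sqrt N$ probability bound. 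The moral gap in your proposal is that you must launder the linear constraint on colors through a set with no non-trivial $(1,-3,3,-1)$-solutions before the ``no symmetric $4$-APs'' hypothesis becomes applicable.
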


\begin{theorem}[Even $k$] \label{thm:even-color-torus}
Let $k \ge 4$ be even.
Suppose there exists an $r$-coloring of $\ZZ/N\ZZ$ avoiding symmetrically colored $k$-APs.
For every $\epsilon>0$, there exists a constant $C$ (depending on $k, r, N,\epsilon$), such that for all $0< \alpha < 1$,
\[
	\wt\rho_k(\alpha) \le C \alpha^{k-1 + (\log_r N)/(k-1) - \epsilon}.
\]
\end{theorem}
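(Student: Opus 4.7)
The plan is to construct, from the given $r$-coloring $\chi\colon\ZZ/N\ZZ\to[r]$, a Riemann integrable $F\colon\TT^2\to[0,1]$ of constant first marginal $\alpha$ achieving the claimed bound. The atomic building block will be
\[
F_0(x,y)=\mathbf{1}\bigl[y-\phi(\chi(\lfloor Nx\rfloor))\in[0,\alpha)\bigr]
\]
for an embedding $\phi\colon[r]\to\TT$; it has constant first marginal $\alpha$ by translation-invariance of Lebesgue measure on $\TT$. Writing $c_i=\chi(\lfloor Nx_i\rfloor)$, $z_i=y_i-\phi(c_i)$, and $\lambda_i=(-1)^{i-1}\binom{k-1}{i-1}$, the $y$-constraint in the definition of $\wt\Lambda_k$ rewrites as $\sum_i\lambda_iz_i\equiv-T(c)\pmod 1$ with $T(c):=\sum_i\lambda_i\phi(c_i)$, and each contributing configuration requires $z_i\in[0,\alpha)$ for every $i$.

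The algebraic heart of the argument is a combinatorial lemma valid for even $k$: a partition of $[k]$ has every class summing to $0$ under $\lambda$ if and only if each class is invariant under $\sigma\colon i\mapsto k-i+1$. I would prove this using the identity $\lambda_{k-i+1}=-\lambda_i$ (special to even $k$), decomposing any candidate zero-sum set into its $\sigma$-invariant and asymmetric parts and showing that the asymmetric part must be empty (the centered binomial coefficients grow monotonically toward the middle, ruling out nontrivial cancellations). Choosing $\phi$ generic so that $\{\phi(c)\}_{c\in[r]}\cup\{1\}$ is $\mathbb Q$-linearly independent in $\mathbb R$, $T(c)$ vanishes in $\TT$ precisely for palindromic color patterns, with a uniform lower bound on its distance to zero in $\TT$ across the (finitely many) non-palindromic color patterns. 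For $\alpha$ below this threshold, only palindromic patterns can contribute, and the coloring hypothesis annihilates contributions from non-trivial $\ZZ/N\ZZ$-APs.

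To attain the full exponent $k-1+(\log_r N)/(k-1)-\epsilon$ rather than the mere $\alpha^{k-1}$ or $\alpha^k$ bound the atomic construction gives, I would iterate or refine $F_0$ by incorporating the coloring at $m\approx(\log_r N)/(k-1)$ essentially independent scales---for example, encoding the coloring at scales $N^{1+j\eta}$ for $j=0,\dots,m-1$ and small $\eta>0$ into successive base-$\alpha$ expansion components of the $y$-offset. Weyl equidistribution of $(\{N^{1+j\eta}x\})_j$ in $\TT^m$ makes the scales act nearly independently, each contributing another factor of $\alpha$ to $\wt\Lambda_k$ via the same symmetric-coloring argument, while the nested-interval structure preserves constant first marginal $\alpha$.

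The main obstacle is twofold. First, a $k$-AP in $\TT$ need not round to a genuine $\ZZ/N\ZZ$-AP, so the coloring hypothesis fails on a positive-measure region of $\TT^2$ and contributes spuriously to $\wt\Lambda_k(F_0)$; one must either excise this region (and renormalize the density) or show that the multi-scale refinement dominates it. Second, rigorously quantifying the near-independence across scales requires a careful Weyl-type estimate whose error is responsible for the $\epsilon$-loss in the exponent---and, more fundamentally, verifying that each new scale contributes a full additional factor of $\alpha$ to $\wt\Lambda_k$ rather than merely reshuffling existing constraints will be the central technical step of the proof.
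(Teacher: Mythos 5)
Your ``combinatorial lemma''---that a partition of $[k]$ has every class zero-summing under $\lambda_i = (-1)^i\binom{k-1}{i-1}$ if and only if each class is invariant under $\sigma\colon i\mapsto k+1-i$---is false for general even $k$, and this is fatal to the argument as written. The paper's remark following \cref{def:beta_k} gives the counterexample at $k=14$: the identity $\binom{13}{3}+\binom{13}{7}=\binom{13}{4}+\binom{13}{8}$ (both equal $2002$) shows that $\{4,5,8,9\}$ is a zero-sum class that is not $\sigma$-invariant, so $(a,a,a,b,b,a,a,b,b,a,a,a,a,a)$ is a trivial solution to the $14$-binomial equation that is not palindromic. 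Your proposed proof (``the centered binomial coefficients grow monotonically toward the middle, ruling out nontrivial cancellations'') is the natural first guess, but cancellations can pair terms from both sides of the centre. The paper handles exactly this via clause~(b) in \cref{defn:k-binomial}, which catches monochromatic zero-sum $I$ with $\abs{I}\ge 3$, and the input coloring is required to additionally avoid monochromatic $k$-patterns, arranged by taking a product with a Behrend-type coloring (\cref{lem:avoid-pattern}) before feeding through \cref{lem:disc-to-cont-k-ap}. Without an analogous component, your construction admits non-palindromic color patterns with $T(c)=0$ that contribute extra mass to $\wt\Lambda_k$.

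The two obstacles you flag are also genuine, and the paper's route sidesteps both in a way that your sketch does not. Rather than a single generic embedding $\phi\colon[r]\to\TT$ with a $\mathbb Q$-linear-independence condition and interval width $\alpha$, the paper uses an $r$-element subset $S\subset\ZZ/m\ZZ$ with no non-trivial solutions to the $k$-binomial equation (\cref{lem:avoid-nontriv}, quantified by $\beta_k$ in \cref{def:beta_k}), with interval width $1/(2^km)$; decoupling the interval width from the number of colors is what makes the exponent non-trivial. Rather than a heuristic multi-scale ``base-$\alpha$'' refinement whose independence you would have to prove by Weyl estimates, the paper uses the tensor-power trick (\cref{lem:power}) to pass from an $r$-coloring of $\ZZ/N\ZZ$ to an $r^\ell$-coloring of $\ZZ/N^\ell\ZZ$ and optimizes over $\ell$; the term $(\log_r N)/(k-1)$ in the exponent drops directly out of this trade-off. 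And rather than a positive-measure ``spurious'' region coming from continuous $\TT$-APs that fail to round to genuine $\ZZ/N\ZZ$-APs, the interlacing construction in \cref{lem:disc-to-cont-k-ap} confines the bad set to measure $O_k(1/N)$. Each of these is a concrete idea missing from the proposal, not merely a quantitative refinement of it.
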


For even $k \ge 6$, our quantitative bounds are slightly weaker than for $k=4$ (the main bottleneck is that we do not know a generalization of \cref{lem:13avoid} to larger $k$).

Together with \cref{prop:set-torus-bound}, \cref{thm:odd-torus,thm:4-torus,thm:even-color-torus} imply \cref{thm:kap-cons-odd,thm:even-color-quan,thm:kap-reduction-even}. (The deduction of \cref{thm:kap-reduction-even} follows from the observation that if there exists an $N^{\epsilon}$-coloring of $[N]$ avoiding symmetrically colored $k$-APs, taking $k$ copies of this coloring gives a $kN^{\epsilon}$-coloring of $\Z/kN\Z$ that avoids symmetrically colored $k$-APs.) We prove \cref{thm:odd-torus} in \cref{sec:odd-torus-const} and \cref{thm:4-torus,thm:even-color-torus} in \cref{sec:even-torus-const}. The constructions in these sections need some building blocks that we collect in \cref{sec:non-trivial-sol}. 

We also prove the following converse to \cref{thm:even-color-torus}. We show that given a function $F\colon\TT^2\to[0,1]$ with constant first marginal and $\wt\Lambda_k(F)$ small, one can extract a coloring of $\Z/N\Z$ with few colors that avoids symmetrically colored $k$-APs. See \cref{sec:extract} for proof.

\begin{theorem}[Even $k$ converse]
\label{thm:even-k-converse}
Let $k\geq 4$ be even. Suppose there exist $C, \alpha_0 > 0$ such that  such that $\wt\rho_k(\alpha)\le \alpha^C$ for all $0 < \alpha < \alpha_0$.
Then for every $\gamma > 1/(C/(2k) - 3k/4)$, for all sufficiently large $N$, 
there exists an $\ceil{N^\gamma}$-coloring of $\Z/N\Z$ avoiding symmetrically colored $k$-APs.
\end{theorem}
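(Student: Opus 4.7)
The plan is to extract a coloring of $\Z/N\Z$ from a near-optimal Riemann integrable witness function. Given the hypothesis $\wt\rho_k(\alpha) \le \alpha^C$ for $\alpha < \alpha_0$, I will choose $\alpha = N^{-\beta}$ with $\beta$ to be optimized at the end, and pick $F\colon\TT^2\to[0,1]$ with constant first marginal $\alpha$ and $\wt\Lambda_k(F)\le 2\alpha^C$. Using $L^1$-density of step functions together with the Lipschitz bound $|\wt\Lambda_k(F)-\wt\Lambda_k(F')|\le k\|F-F'\|_{L^1}$, I first reduce to the case where $F$ is a step function on the partition of $\TT^2$ into cells $I_j\times J_\ell$ for $j\in[N]$, $\ell\in[M]$ (with $I_j$, $J_\ell$ intervals of length $1/N$, $1/M$ in $\TT$), taking values in the finite set $\{0,1/T,\ldots,1\}$, while still ensuring $\wt\Lambda_k(F)\le 3\alpha^C$. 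The base coloring $\chi_0\colon\Z/N\Z\to\{0,\ldots,1\}^M$ sends $j$ to the slice profile $(F_{j,1},\ldots,F_{j,M})$, using at most $(T+1)^M$ colors.

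The central computation is a Fourier lower bound on the contribution of each symmetrically colored $k$-AP to $\wt\Lambda_k(F)$. Parametrizing $V$ by $y_i=\sum_{r=0}^{k-2}c_r i^r$ with $c\in\TT^{k-1}$ gives the identity $\int_V e^{2\pi i\langle m,y\rangle}\,dy=\mathbbm{1}[m\in\Z v]$ where $v_i=(-1)^{i-1}\binom{k-1}{i-1}$. For a symmetrically colored $k$-AP $(j_1,\ldots,j_k)$ in $\chi_0$ we have $F_{j_i}\equiv F_{j_{k-i+1}}$, and combining Fourier expansion in $y$ with the identity $v_{k-i+1}=-v_i$ (which holds for even $k$) gives
\[
\int_V\prod_{i=1}^k F_{j_i}(y_i)\,dy = \sum_{t\in\Z}\prod_{i=1}^{k/2}\bigl|\widehat{F_{j_i}}(tv_i)\bigr|^2 \ge \alpha^k,
\]
where the lower bound comes from the $t=0$ term together with $\widehat{F_{j_i}}(0)=\int F_{j_i}=\alpha$. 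Comparing against $\wt\Lambda_k(F)\le 3\alpha^C$, the number of symmetrically colored $k$-APs in $\chi_0$ is at most $O(\alpha^{C-k}N^2)$.

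To eliminate the remaining bad $k$-APs, I reassign one vertex of each to a fresh color unique to that vertex; since a vertex bearing a unique color cannot lie in any symmetrically colored $k$-AP, iterating through all bad $k$-APs produces a coloring with no symmetrically colored $k$-AP and using in total at most $(T+1)^M + O(\alpha^{C-k}N^2)$ colors.

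The remaining step is parameter optimization. With $\alpha=N^{-\beta}$, the second term becomes $N^{2-\beta(C-k)}$, matching $N^\gamma$ when $\beta\approx(2-\gamma)/(C-k)$; the first term $(T+1)^M$ must be at most $N^\gamma$ while ensuring the step-function approximation error $\|F-F_{\mathrm{step}}\|_{L^1}$ is at most $\alpha^C$. I expect the main obstacle to lie in this joint balance: since the modulus of continuity of a general Riemann integrable witness $F$ is not a priori bounded, $(T,M)$ must be selected carefully in tandem with $\beta$ (perhaps after first smoothing $F$). Optimizing this trade-off, together with the geometric weight of the constraint region $V$ entering via the Fourier lower bound, is what gives rise to the claimed exponent $\gamma>1/(C/(2k)-3k/4)$.
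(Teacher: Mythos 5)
Your proposal has a clean and correct Fourier lower bound at its core: for even $k$, the coefficients of the binomial equation satisfy $v_{k+1-i}=-v_i$, so when a $k$-AP of slices has $F_{j_i}\equiv F_{j_{k+1-i}}$ the $y$-integral collapses to $\sum_{t}\prod_{i\le k/2}|\widehat{F_{j_i}}(tv_i)|^2\ge\alpha^k$. Combining this with the $\Theta(N^{-2})$ measure of a cell $k$-AP gives, correctly, that the number of symmetrically colored $k$-APs in the base coloring is $O(\eta N^2/\alpha^k)$. The patch-up step (give each offending progression a vertex with a fresh unique color) is also valid, since a uniquely-colored vertex cannot participate in any symmetric pairing.

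However, there is a genuine gap, which you partially flag yourself: the base coloring $\chi_0$ by slice profiles uses up to $(T+1)^M$ colors, and you have no control on $M$. The hypothesis only supplies \emph{some} Riemann integrable $F$ with $\wt\Lambda_k(F)\le 2\alpha^C$; such an $F$ may require arbitrarily fine $y$-resolution to approximate in $L^1$, so $M$ (hence $(T+1)^M$) is not bounded by any power of $N$. The suggested remedy of mollifying $F$ at scale $\sigma$ also does not close the gap: Riemann integrability gives $\|F_\sigma-F\|_{L^1}\to 0$ but at an $F$-dependent rate, so $\sigma$ (hence $M\sim1/\sigma$) is still uncontrolled; moreover, $\wt\Lambda_k$ is not invariant under mollification, since the $2k$ coordinates on the constraint variety $V$ are shifted by independent perturbations rather than a common one, so one would have to re-prove the error bound. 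As it stands, the proof does not yield a polynomial-in-$N$ color bound, and the claimed exponent $1/(C/(2k)-3k/4)$ is asserted but never derived.

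For contrast, the paper's proof sidesteps this entirely by being agnostic to the fine structure of $F$: it samples $r$ random thresholds $y_1,\dots,y_r$, colors $i$ by the least $j$ with $F(x_0+ix_1,y_j)\ge\alpha/2$, and bounds the failure probabilities by Markov plus H\"older applied directly to $\wt\Lambda_k(F)=\eta$. This gives at most $r\approx\alpha^{-1}\log(1/\eta)$ colors regardless of $F$'s complexity, which is what produces the stated exponent. The random-threshold idea is, in effect, the missing device your proposal needs to replace the slice-profile coloring.
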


Recall that \cref{conj:sckap} says that there exist $N^{o(1)}$-colorings of $[N]$ avoiding symmetrically colored $k$-APs. \cref{thm:even-color-torus,thm:even-k-converse} together imply the following equivalence of conjectures.

\begin{corollary} \label{cor:k-ap-equiv}
    For each even $k\ge 4$, \cref{conj:wtunifkap} for this $k$ is equivalent to \cref{conj:sckap} for the same $k$.
\end{corollary}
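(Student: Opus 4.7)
The plan is to derive the equivalence directly from the two quantitative theorems already stated in the paper, \cref{thm:even-color-torus} and \cref{thm:even-k-converse}. Both directions amount to translating between the shapes of the quantifiers in the conjectures and the explicit exponents given in the theorems, with only minor bookkeeping needed to convert colorings of $[N]$ into colorings of $\Z/N\Z$ and back.

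For the direction \cref{conj:sckap} $\Rightarrow$ \cref{conj:wtunifkap}, I would fix an arbitrary target exponent $C > 0$ and some small $\epsilon \in (0,1)$, then choose $\delta > 0$ with $k-1 + 1/(\delta(k-1)) - \epsilon > C$. By \cref{conj:sckap}, for all sufficiently large $N$ there exists an $r$-coloring of $[N]$ with $r \le N^\delta$ avoiding symmetrically colored $k$-APs. Stacking $k$ copies side by side as described in the parenthetical after \cref{thm:kap-reduction-even} upgrades this to a $kr$-coloring of $\Z/kN\Z$ with the same avoidance property. Plugging this into \cref{thm:even-color-torus} yields
\[
\wt\rho_k(\alpha) \le C_0\, \alpha^{k-1 + (\log_{kr}(kN))/(k-1) - \epsilon},
\]
and since $\log_{kr}(kN) \to 1/\delta$ as $N \to \infty$, the exponent of $\alpha$ eventually exceeds $C$. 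Absorbing the constant $C_0$ into a slightly smaller range of $\alpha$ gives $\wt\rho_k(\alpha) < \alpha^C$ for all sufficiently small $\alpha$.

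For the converse \cref{conj:wtunifkap} $\Rightarrow$ \cref{conj:sckap}, I would fix a desired color-budget exponent $\gamma_0 > 0$, pick $\gamma \in (0,\gamma_0)$, and then choose $C > 2k/\gamma + 3k^2/2$ so that $\gamma > 1/(C/(2k) - 3k/4)$ as required by \cref{thm:even-k-converse}. The hypothesis \cref{conj:wtunifkap} supplies $\wt\rho_k(\alpha) \le \alpha^C$ for $\alpha$ below some threshold, and \cref{thm:even-k-converse} then produces, for all sufficiently large $N$, an $\ceil{N^\gamma}$-coloring of $\Z/N\Z$ without symmetrically colored $k$-APs. Restricting this coloring to the canonical representatives $\{0,1,\dots,N-1\} \cong [N]$ preserves the avoidance property, because every $k$-AP inside $[N]$ is in particular a (non-wrapping) $k$-AP in $\Z/N\Z$. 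For $N$ large, $\ceil{N^\gamma} \le N^{\gamma_0}$, establishing \cref{conj:sckap}.

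Since the substantive arguments live inside \cref{thm:even-color-torus,thm:even-k-converse}, I expect no genuine obstacle here. The only mild subtlety is choosing $\delta$ small (respectively $C$ large) in the correct order relative to the target exponents, and confirming that the small mismatch between $[N]$ and $\Z/N\Z$ is handled by the two elementary reductions above.
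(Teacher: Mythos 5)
Your proposal is correct and takes the same route the paper intends: the paper does not spell out a proof of this corollary but simply asserts that \cref{thm:even-color-torus} and \cref{thm:even-k-converse} together yield the equivalence, and you have correctly supplied the quantifier bookkeeping and the two elementary $[N]\leftrightarrow\Z/N\Z$ reductions that make this precise.
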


\section{Some building blocks}
\label{sec:non-trivial-sol}

In our constructions we will need colorings that avoid various configurations. We start by recording a fairly standard fact that a coloring based on Behrend sets avoids many small configurations.

\begin{definition}
A $k$-pattern is a triple $(n_1,n_2,n_3)$, not all equal, satisfying $a n_1 + b n_2 = (a+b) n_3$ for some positive integers $a,b$ and $a + b \le k-1$. 
\end{definition}

\begin{lemma}\label{lem:avoid-pattern}
 For every $k$ there is some $C$ so that for every $N$, there is an $r$-coloring of $\Z/N\Z$ with $r \le e^{C \sqrt{\log N}}$ and without monochromatic $k$-patterns.
\end{lemma}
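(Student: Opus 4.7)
The plan is to combine a Behrend-type sphere construction with a translate-covering argument. First I would build a single $k$-pattern-free subset $A \subseteq \ZZ/N\ZZ$ of density at least $e^{-C\sqrt{\log N}}$ (with $C$ depending only on $k$), and then show that $\ZZ/N\ZZ$ can be covered by $e^{O(\sqrt{\log N})}$ translates of $A$. Coloring each element by the first index of a covering translate then produces the required coloring, since every color class sits inside some translate of $A$ and is therefore $k$-pattern-free.

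For the construction of $A$, I would choose integer parameters $M$ and $d$, set $B = (k-1)M$, and require $(k-1)B^d \le N$. Each vector $v \in \{0,\dots,M-1\}^d$ is mapped to $n(v) = \sum_{i=0}^{d-1} v_i B^i$, and $A \subseteq \ZZ/N\ZZ$ is taken to be the image of a largest sphere $\{v : |v|^2 = r\}$, which has size at least $M^d/(d M^2)$ by pigeonhole on the possible values of $|v|^2$. To verify that $A$ avoids non-trivial $k$-patterns, suppose that $a \cdot n(v^{(1)}) + b \cdot n(v^{(2)}) \equiv (a+b)\cdot n(v^{(3)}) \pmod N$ for positive integers $a, b$ with $a+b \le k-1$. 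Both sides are non-negative and at most $(k-1)B^d \le N$, so the congruence lifts to an integer equation. Since $(a+b)(M-1) < B$, neither side produces carries in base $B$, and comparing digits yields $a v^{(1)}_i + b v^{(2)}_i = (a+b) v^{(3)}_i$ for every $i$. This expresses each $v^{(3)}_i$ as a proper convex combination of $v^{(1)}_i$ and $v^{(2)}_i$; strict convexity of $x \mapsto x^2$, together with $|v^{(1)}|^2 = |v^{(2)}|^2 = |v^{(3)}|^2 = r$, then forces $v^{(1)} = v^{(2)} = v^{(3)}$. The standard Behrend optimization (taking $d \approx \sqrt{\log N/\log k}$ with $M$ chosen so that $((k-1)M)^d$ is as close to $N/(k-1)$ as possible) yields $|A|/N \ge e^{-C\sqrt{\log N}}$.

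For the covering step, the $k$-pattern relation is translation-invariant, so every translate $A+t$ is also $k$-pattern-free. Choosing $r = \lceil (N/|A|)(\log N + 1)\rceil$ uniformly random translates, the probability that a fixed $n$ is missed is at most $e^{-r|A|/N} < 1/(eN)$, so by a union bound a covering exists, and it uses only $r = e^{O(\sqrt{\log N})}$ translates, as required. The only delicate point in the argument is matching the mod-$N$ equation with an integer identity: we strictly need $(k-1)B^d \le N$, and this constraint is what caps the density of $A$ at $e^{-\Theta(\sqrt{\log N})}$ rather than something polynomial. Since $a+b$ can be as large as $k-1$, this loss is unavoidable, but it is exactly what the Behrend-type optimization is tuned to absorb; everything else in the plan is routine bookkeeping.
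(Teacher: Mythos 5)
Your proposal is correct and follows essentially the same two-step strategy as the paper: construct a dense $k$-pattern-free Behrend-type set, then cover $\ZZ/N\ZZ$ by $e^{O(\sqrt{\log N})}$ random translates and color by covering index. The only difference is that you spell out the Behrend sphere construction and the no-carry digit-comparison argument in full, whereas the paper simply cites Ruzsa (\cite[Theorem 2.3]{Ruz93}) for the existence of the $k$-pattern-free set; your details (the base $B = (k-1)M$ preventing carries, strict convexity of $x \mapsto x^2$, and the covering bound $r \approx (N/|A|)\log N$) all check out.
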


To see this, note that a standard extension of Behrend's construction creates a $k$-pattern-free subset $S\subset\Z/N\Z$ of size at least $Ne^{-C\sqrt{\log N}}$ (see, e.g., \cite[Theorem 2.3]{Ruz93}). Taking $2\log N e^{C\sqrt{\log N}}\leq e^{C'\sqrt{\log N}}$ random translates of $S$ covers $\Z/N\Z$ with positive probability, producing a coloring where each color class is $k$-pattern-free.

To apply \cref{prop:conv} we will need to construct a function $F$ where we have control over $\wt\Lambda_k(F)$. Recall that $\wt\Lambda_k(F)$ is defined to be the expected value of $F(x_1,y_1)\cdots F(x_k,y_k)$ over $k$-tuples of points in $\TT$ where the $x$-coordinates form a $k$-AP and the $y$-coordinates satisfy
\[
\text{``the $k$-binomial equation'':} \qquad 
\sum_{i=1}^k (-1)^i \binom{k-1}{i-1} y_i = 0.
\]

Following Ruzsa~\cite{Ruz93}, we say that a solution $(n_1,\ldots,n_k)$ to the equation $\sum_{i=1}^ka_in_i=0$ is \emph{trivial} if $\sum_{i:n_i=n}a_i=0$ for all $n$. For example, the trivial solutions to the 4-binomial equation $n_1-3n_2+3n_3-n_4=0$ are exactly the solutions satisfying $n_1=n_4$ and $n_2=n_3$.

\begin{lemma}[{Ruzsa \cite[Theorem 2.1]{Ruz93}}] \label{lem:avoid-nontriv}
For every sequence $a_1 ,\ldots, a_k$ of nonzero integers with $a_1 + \cdots + a_k = 0$, there is some constant $C = C(a_1, \ldots, a_k)$ so that whenever $N > Cr^{k-1}$, there is an $r$-element subset of $\ZZ/N\ZZ$ without non-trivial solutions to $a_1 n_1 + \cdots + a_k n_k = 0$.
\end{lemma}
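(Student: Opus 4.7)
The plan is a standard probabilistic deletion argument. Assuming $N \geq 2r$ (otherwise the claim is vacuous), sample $S \subseteq \ZZ/N\ZZ$ uniformly at random among $2r$-element subsets. If the expected number of non-trivial solutions $(n_1,\dots,n_k)\in S^k$ of $a_1 n_1 + \cdots + a_k n_k = 0$ is strictly less than $r$, then some realization $S_0$ admits strictly fewer than $r$ such solutions; deleting one element per non-trivial solution from $S_0$ leaves a subset of size greater than $r$ containing no non-trivial solutions, from which any $r$-element subset works.

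The heart of the argument is bounding this expectation, which I handle by stratifying solutions by $d$, the number of distinct values taken by $(n_1,\dots,n_k)$. The case $d=1$ contributes only trivial solutions: every $(n,n,\dots,n)$ satisfies $\sum a_i n = 0$ because $\sum a_i = 0$, but by definition this is trivial. For each $d\in\{2,\dots,k\}$ and each set-partition of $\{1,\dots,k\}$ into $d$ nonempty blocks, the equation collapses to a single nonzero linear relation in $d$ distinct unknowns over $\ZZ/N\ZZ$, giving at most $O(N^{d-1})$ ordered $k$-tuples, with the implicit constant depending only on $a_1,\dots,a_k$ (the factor coming from $\gcd$s with $N$ is bounded by $\max_i\abs{a_i}$). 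Since the probability that $d$ prescribed distinct elements all lie in $S$ is at most $(2r/N)^d$, the expected count of non-trivial solutions in $S$ is bounded by
\[
\sum_{d=2}^{k} O\paren{N^{d-1}(2r/N)^d} = O\paren{r^k/N},
\]
with the $d=k$ term dominating and the implicit constant depending only on $a_1,\dots,a_k$.

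Taking the constant $C$ in the hypothesis $N > C r^{k-1}$ sufficiently large in terms of $a_1, \dots, a_k$ then makes this expected count at most $r/2 < r$, which completes the deletion argument. The proof is entirely elementary; the only minor subtlety is noticing that the $d=1$ stratum, which would otherwise contribute $\Theta(r)$ to the expectation and spoil the bound, yields only trivial solutions thanks to $\sum a_i = 0$. This is precisely what allows the exponent $k-1$ rather than $k$ on $r$ in the hypothesis.
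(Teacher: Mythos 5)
Your probabilistic deletion argument is a correct route to the statement, but it is not the one the paper takes: the paper (following Ruzsa) builds the set greedily. In that version one shows that at any stage with $\lvert S\rvert < r$, at most $O_{a_1,\ldots,a_k}(\lvert S\rvert^{k-1})$ elements of $\ZZ/N\ZZ$ would create a non-trivial solution if added, so the construction never stalls once $N > Cr^{k-1}$. The zero-sum hypothesis enters there too, but in a disguised way: when a candidate new element $x$ is to occupy a set of positions $I$ with $\sum_{i\in I}a_i=0$, the equation does not constrain $x$ at all; one instead replaces $x$ by an existing element of $S$ and argues that a new non-trivial solution would force a non-trivial solution already inside $S$, contradicting the inductive hypothesis. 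Your version replaces this bookkeeping with a single expectation computation and a deletion step, which is shorter and cleaner to state at the cost of explicitness and a factor of $2$ in the intermediate set size.

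There is, however, one asserted step that is false as written and needs repair. For $d\ge 2$ you claim the collapsed equation $\sum_{B}\bigl(\sum_{i\in B}a_i\bigr)m_B=0$ is automatically a nonzero relation. That is not true for every set-partition: a partition with $d\ge 2$ blocks can have every block coefficient sum vanish (e.g.\ for $(a_1,a_2,a_3,a_4)=(1,-3,3,-1)$ and the partition $\{1,4\},\{2,3\}$). For such a partition the collapsed relation is $0=0$, so it is satisfied by roughly $N^{d}$ tuples, contributing $\Theta(r^{d})$ to the expectation; since degenerate partitions can occur for $d$ up to $k/2$, this would be $\Theta(r^{k/2})\gg r$ for $k\ge 4$ and would destroy your bound. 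The correct justification is exactly parallel to your $d=1$ observation: if every block coefficient sum is zero and the $m_B$ are distinct, then by definition the solution is trivial, so these partitions contribute nothing to the count of \emph{non-trivial} solutions. Once you restrict the sum to partitions with at least one nonzero block sum, your $O(N^{d-1})$ count and the resulting $O(r^{k}/N)$ expectation bound are correct, and the rest of the argument goes through. In short, the ``only minor subtlety'' you flag at $d=1$ in fact recurs at every $d\ge 2$, for the same reason, and must be invoked there as well.
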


This proof follows by simply constructing the desired set greedily.

\begin{definition} \label{def:beta_k}
For each even $k \ge 4$, let $\beta_k$ be the smallest real number such that the following is true: the largest subset of $[N]$ avoiding non-trivial solutions to the $k$-binomial equation has size at least $N^{1/\beta_k - o(1)}$, where $o(1) \to 0$ as $N \to \infty$.
\end{definition}

It follows from \cref{lem:avoid-nontriv} that $\beta_k\leq k-1$. Though we will not use this fact, it follows from a simple double-counting argument that for each even $k\geq 4$ we have $\beta_k\geq k/2$.
We conjecture that $\beta_k = k/2$ for all even $k\geq 4$, though we are only able to prove this in the case of $k=4$. This is the reason that \cref{thm:4-torus} is quantitatively stronger than \cref{thm:even-color-torus}.

\begin{remark}
$(n_1,n_2,\ldots,n_{k/2},n_{k/2},\ldots,n_1)$ is a trivial solution to the $k$-binomial equation for each even $k$ and each choice of $n_1,\ldots,n_{k/2}$. However, not all trivial solutions are of this form. For example, for $k=14$ the equality $\binom{13}3+\binom{13}7=\binom{13}4+\binom{13}8$ holds, so $(a,a,a,b,b,a,a,b,b,a,a,a,a,a)$ is also a trivial solution to the 14-binomial equation.
\end{remark}

\begin{lemma} \label{lem:13avoid}
For every integer $r$ and $N>Cr^2$, there exists a set $S\subseteq\Z/N\Z$ of size $r$, such that if $a, b, c, d\in S$ satisfies $a-3b+3c-d=0$, then $a=d$ and $b=c$. 
\end{lemma}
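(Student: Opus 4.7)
The plan is to construct $S$ as the image of a 4-AP-free set under a quadratic polynomial lift into $\Z/p^2\Z$. Fix a prime $p$ with $\gcd(p,6)=1$, set $N = p^2$, and consider the injection $\varphi(k) = k + p\cdot(k^2 \bmod p) \in \Z/p^2\Z$ for $k \in [0, \lfloor p/8\rfloor)$. The candidate set will be $S = \varphi(I)$ for a suitable 4-AP-free $I \subseteq [0, \lfloor p/8\rfloor)$ of size $r$.

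The first step is to translate the avoidance condition on $S$ into a condition on $I$. Suppose $k_1,k_2,k_3,k_4 \in I$ produce $a = \varphi(k_1), b = \varphi(k_2), c = \varphi(k_3), d = \varphi(k_4)$ satisfying $a - 3b + 3c - d \equiv 0 \pmod{p^2}$. Writing each $\varphi(k_i) = k_i + p(k_i^2 \bmod p)$ and using $k_i < p/8$, one checks that the ``low'' and ``high'' digits cannot interact modulo $p^2$, so the equation decomposes into the integer identity $k_1 + 3k_3 = 3k_2 + k_4$ together with the congruence $k_1^2 + 3k_3^2 \equiv 3k_2^2 + k_4^2 \pmod p$.

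Setting $\epsilon = k_2 - k_3$, the first equation gives $k_1 - k_4 = 3\epsilon$; substituting into the second and factoring yields $3\epsilon\cdot\bigl((k_1+k_4) - (k_2+k_3)\bigr) \equiv 0 \pmod p$. Since $\gcd(3,p)=1$, either $\epsilon = 0$ (which forces $k_1 = k_4$ and $k_2 = k_3$, the trivial solution), or $k_1+k_4 \equiv k_2+k_3 \pmod p$; by the size restriction this must hold in $\Z$. Combining $k_1 - k_4 = 3\epsilon$ with $k_1 + k_4 = k_2 + k_3$ solves to $(k_1,k_4) = (2k_2 - k_3,\, 2k_3 - k_2)$, so $\{k_1,k_2,k_3,k_4\}$ forms a 4-AP in $\Z$ with nonzero common difference. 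It therefore suffices to choose $I$ to be 4-AP-free.

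Behrend's classical sphere construction gives 4-AP-free subsets of $[0,M)$ of size at least $M\cdot\exp(-C'\sqrt{\log M})$: a nondegenerate arithmetic progression cannot lie on a Euclidean sphere, so spheres in $\Z^d$ are automatically $k$-AP-free for all $k \geq 3$. Choosing $p$ to be a prime slightly larger than $8 r \cdot \exp(C'\sqrt{\log r})$ and applying this to $M = \lfloor p/8\rfloor$ yields a 4-AP-free $I$ of size $r$, and hence $S = \varphi(I)$ in $\Z/N\Z$ satisfies the required property. The main obstacle to flag is the quantitative gap: this construction only gives $N = p^2 = r^{2+o(1)}$ rather than the stronger $N = O(r^2)$ with an absolute constant $C$. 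This weaker bound is already enough to establish $\beta_4 = 2$ and all the downstream quantitative applications of the lemma in the paper; obtaining a truly absolute $C$ appears to require a more refined construction (perhaps an explicit Sidon-type set that also avoids dilations by $3$, or a random-alteration argument that exploits the specific ``carry'' structure of $\varphi$ to reduce to a rarer class of bad 4-APs), which I do not see how to produce cleanly.
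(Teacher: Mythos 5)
Your reduction (quadratic lift $\varphi(k)=k+p(k^2\bmod p)$, digit decomposition, and the algebra landing on a genuine $4$-AP in $I$) is correct as far as it goes, and it is a legitimately different route from the paper's. But, as you flag yourself, it does not prove the lemma as stated: Behrend only gives a $4$-AP-free $I\subseteq[0,p/8)$ of size $r$ when $p\gtrsim r\exp(C'\sqrt{\log r})$, so you end up with $N=p^2=r^{2+o(1)}$ rather than $N=O(r^2)$ with an absolute constant. That quasi-polynomial loss is a real gap relative to the claim; the lemma asserts the stronger bound, and the stronger bound is what lets the paper state \cref{thm:4-torus} with the clean exponent $3+\tfrac12\log_r N$ (the $o(1)$ would otherwise bleed into the exponent). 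The missing idea is that one can sidestep $4$-AP-freeness entirely and directly build a ``Sidon-type'' set adapted to the coefficients $(1,3)$. The paper takes $S$ to be the first $r$ positive integers whose base-$9$ digits all lie in $\{0,1,2\}$. Writing such numbers in base $3$, the digits of $a$ occupy even positions and the digits of $3c$ occupy odd positions, both bounded by $2$, so $a+3c$ has a carry-free base-$3$ expansion that interleaves the two; hence $a+3c=d+3b$ forces $a=d$, $b=c$ digit by digit. The largest element of $S$ is at most $9r^2$, giving the absolute constant $C$. In short: your argument is sound and would suffice for the qualitative \cref{conj:wtunifkap}-type conclusions, but the explicit carry-free digit construction is what is needed to reach the quantitative statement, and it is also considerably shorter.
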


\begin{proof}
Let $S$ consist of the first $r$ positive integers whose base-$9$ expansion uses only $0,1,2$. Suppose $a + 3c = d + 3b$ for some $a,b,c,d \in S$. By comparing both sides in base-$3$, we find that $a=d$ and $b=c$. The largest element of $S$ is at most $9r^2$, so viewing $S$ as a subset of $\Z/N\Z$ for any $N>36r^2$ has the desired property.
\end{proof}

\section{Constructing a subset of the 2-torus}
\label{sec:odd-torus-const}

To prove \cref{thm:odd-torus,thm:even-color-torus,thm:4-torus} we will need to construct an appropriate function $F\colon\T^2\to[0,1]$. We start by constructing a coloring of $\Z/N\Z$ that avoids patterns corresponding to non-trivial solutions of the $k$-binomial equation.

If $n_1,\ldots, n_k$ is a trivial solution to the $k$-binomial equation, then one of the following holds:
\begin{enumerate}[(i)]
	\item $k$ is even and $n_i= n_{k+1-i}$ for all $i = 1, \dots, k/2$; or
	\item there is some $I \subset [k]$ with $\abs{I} \ge 3$ and $\sum_{i \in I} (-1)^i \binom{k-1}{i-1} = 0$ and $\{n_{j_i}\}_{i \in I}$ are all equal.
\end{enumerate}
(The converse might not hold since (ii) only considers one of the zero-sum subsets of coefficients.)
In light of this we make the following definition.

\begin{definition}[$k$-binomial pattern]
\label{defn:k-binomial}
	In a coloring $\phi \colon G \to [r]$ we say that $n,n+d,\ldots,n+(k-1)d$ with $d\neq 0$ is a \emph{$k$-binomial pattern} if either of the following holds:
\begin{enumerate}[(a)]
	\item $k$ is even and $\phi(n + id) = \phi(n + (k-1-i)d)$ for each $i = 1, \dots, k/2$;  or
	\item there is some $I \subset [k]$ with $\abs{I} \ge 3$ and $\sum_{i \in I} (-1)^i \binom{k-1}{i-1} = 0$ such that $\{n+(i-1)d\}_ {i \in I}$, are assigned the same color under $\phi$.
\end{enumerate}
\end{definition}

\begin{lemma}[General construction for $k$-APs] \label{lem:gen-k-ap-construction}
Let $k \ge 4$ be an integer. For any $r\geq 1$ and $\epsilon>0$, suppose there exist
\begin{itemize}
	\item a Jordan measurable $r$-coloring of $\TT$ such that \[\Pr_{x,y\in\TT}(x,x+y,\ldots,x+(k-1)y\text{ forms a $k$-binomial pattern})\leq\epsilon,\] and
	\item an $r$-element subset of $\ZZ/m\ZZ$ avoiding non-trivial solutions to the $k$-binomial equation.
\end{itemize}
Then there is some Jordan measurable $A \subset \TT^2$ such that $1_A$ has constant first marginal $1/(2^km)$ and $\wt \Lambda_k(1_A) \leq \epsilon m^{-k+1}$.
\end{lemma}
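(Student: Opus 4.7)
The plan is to construct $A$ explicitly from the two given ingredients. Write $S=\{s_1,\ldots,s_r\}\subseteq \ZZ/m\ZZ$ and let $J=[0,1/(2^km))\subseteq\TT$. Set
\[
A = \bigsqcup_{j=1}^{r}\phi^{-1}(j)\times\bigl(s_j/m+J\bigr) \subseteq \TT^2.
\]
Jordan measurability of $A$ follows from that of the color classes $\phi^{-1}(j)$ and of $J$; the vertical slice at each $x\in\TT$ has Lebesgue measure $|J|=1/(2^km)$, giving the asserted constant first marginal.

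To estimate $\wt\Lambda_k(1_A)$, I parametrize $V\subseteq\TT^{2k}$ by $(x_1,d)\in\TT^2$ (setting $x_i=x_1+(i-1)d$) together with $(y_1,\ldots,y_{k-1})\in\TT^{k-1}$, with $y_k$ uniquely determined by the $k$-binomial equation (its $y_k$-coefficient is $\pm 1$). Fix $(x_1,d)$: the condition $(x_i,y_i)\in A$ for every $i$ rewrites as $y_i = s_{\phi(x_i)}/m+z_i$ with $z_i\in J$. Substituting into $\sum_i (-1)^i\binom{k-1}{i-1}y_i\equiv 0\pmod 1$, the $z_i$-contribution lies in $[-2^{k-1}|J|,2^{k-1}|J|]=[-1/(2m),1/(2m)]$ (using $\sum_i\binom{k-1}{i-1}=2^{k-1}$), while the $s_{\phi(x_i)}/m$-contribution is a multiple of $1/m$. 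These two terms can sum to $0\bmod 1$ only if
\[
\sum_{i=1}^k (-1)^i\binom{k-1}{i-1}\,s_{\phi(x_i)} \equiv 0 \pmod m,
\]
and the hypothesis on $S$ then forces $(s_{\phi(x_i)})_{i=1}^k$ to be a \emph{trivial} solution of the $k$-binomial equation.

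The key combinatorial step is to show that triviality, together with $d\neq 0$ (a full-measure condition), forces $x_1,\ldots,x_k$ to be a $k$-binomial pattern in the sense of \cref{defn:k-binomial}. Since $j\mapsto s_j$ is injective, each maximal monochromatic block $B_c=\{i\in[k]:\phi(x_i)=c\}$ must satisfy $\sum_{i\in B_c}(-1)^i\binom{k-1}{i-1}=0$; no single coefficient vanishes, so $|B_c|\ge 2$. If some $B_c$ has size $\ge 3$, it directly witnesses case (b) of \cref{defn:k-binomial}. Otherwise every $B_c$ has size exactly $2$ (forcing $k$ even), and the pair condition $(-1)^i\binom{k-1}{i-1}+(-1)^j\binom{k-1}{j-1}=0$ combined with the identity $\binom{k-1}{a}=\binom{k-1}{b}\Leftrightarrow a=b \text{ or } a+b=k-1$ forces $j=k+1-i$, giving case (a).

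Finally, for each $(x_1,d)$ at which triviality holds, the inner Haar measure on $V$ of $\{(y_i)\in V:y_i-s_{\phi(x_i)}/m\in J \text{ for all } i\}$ is at most $|J|^{k-1}$, obtained by dropping the $z_k\in J$ constraint in the parametrization. Combining with $\Pr_{x_1,d}[\text{$k$-binomial pattern}]\le\epsilon$ yields
\[
\wt\Lambda_k(1_A)\le |J|^{k-1}\cdot\epsilon \le \epsilon\,m^{-(k-1)},
\]
as desired. I expect the main obstacle to be the parity/coefficient case analysis in the previous paragraph; the rest is a routine translation between the Haar measure on $V$ and coloring constraints on $[k]$.
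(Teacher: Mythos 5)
Your proof is correct and follows essentially the same route as the paper's: same set $A$, same observation that the $z_i$-error lies strictly in $(-1/(2m),1/(2m))$ while the $s_{\phi(x_i)}$-contribution is a multiple of $1/m$, hence forcing $\sum_i (-1)^i\binom{k-1}{i-1}s_{\phi(x_i)}\equiv 0\pmod m$, and the same final $|J|^{k-1}$ estimate on the conditional $y$-measure. The one place where you add genuine content is the block-size case analysis showing that a trivial solution must fall into case (a) or (b) of \cref{defn:k-binomial}: the paper merely records this as an unproved observation just before the definition, whereas you supply the argument (size-$\ge 3$ block gives (b); all blocks of size $2$ plus $(-1)^i\binom{k-1}{i-1}+(-1)^j\binom{k-1}{j-1}=0$ forces opposite parities, equal binomials, hence $j=k+1-i$, giving (a)). That verification is a useful supplement but does not change the structure of the proof, and the rest of your write-up matches the paper step for step.
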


\begin{proof}
Let $\Phi \colon \TT \to [r]$ be an $r$-coloring with low $k$-binomial pattern density.
Let $S \subset \ZZ/m\ZZ$ be an $r$-element subset avoiding non-trivial solutions to the $k$-binomial equation and let $s_1, \ldots, s_{r}$ be the elements of $S$.

Define 
\[
J_j = \left[\frac{s_j}{m}, \frac{s_j}{m}  + \frac{1}{2^km}\right) \subset \TT, \quad \text{for each } j = 1, \ldots, r.
\]
Then set
\[
A = \{(x,y)\in\TT^2: y\in J_{\Phi(x)}\}.
\]
Each vertical slice of $A$ is an interval of length $1/(2^km)$ with vertical shift determined by the coloring $\Phi$ and the set $S$. Thus $A$ has constant first marginal $1/(2^km)$. 

It remains to show that $\wt \Lambda_k(1_A) \leq \epsilon m^{-k+1}$. 
Suppose $(x_1, y_1), \ldots, (x_k, y_k) \in A$ where $x_1, \dots, x_k$  is a $k$-AP in $\TT$ and $\sum_{i=1}^k (-1)^i \binom{k-1}{i-1} y_i = 0$ in $\TT$.
Then
\[
\frac{s_{\Phi(x_i)}}{m} \le y_i < \frac{s_{\Phi(x_i)}}{m} + \frac{1}{2^km}, \quad \text{ for each } i = 1, \ldots, k.
\]
Since $\sum_{i=1}^k (-1)^i \binom{k-1}{i-1} y_i = 0$ in $\TT$, this implies that 
\[
\sum_{i=1}^k (-1)^i \binom{k-1}{i-1}\frac{s_{\Phi(x_i)}}{m} 
\]
lies within less than $\frac{1}{2^k m} \sum_{i=1}^k \binom{k-1}{i-1} = \tfrac1{2m}$ of an integer. Thus
\begin{equation*}
\sum_{i=1}^k (-1)^i \binom{k-1}{i-1} s_{\Phi(x_i)} \equiv 0 \pmod m.
\end{equation*}
Since $S$ has no non-trivial solutions to the $k$-binomial equation, the above must be a trivial solution. 
By the definition of a trivial solution, one of the following must be true:
\begin{enumerate}[(i)]
	\item $k$ is even and $\Phi(x_i) = \Phi(x_{k+1-i})$ for all $i = 1, \dots, k/2$;
	\item there is some $I \subset [k]$ with $\abs{I} \ge 3$ and $\sum_{i \in I} (-1)^i \binom{k-1}{i-1} = 0$ and $\{x_i\}_{i \in I}$, are assigned the same color under $\Phi$.
\end{enumerate}
In other words $x_1,\ldots,x_k$ must form a $k$-binomial pattern in $\Phi$. By hypothesis, this occurs with probability at most $\epsilon$. Once the $x_i$'s are chosen, each $y_i$ lies in an interval of length $1/(2^km)$, subject to a single linear equation $\sum_{i=1}^k (-1)^i \binom{k-1}{i-1} y_i=0$. It follows that $\wt \Lambda_k(1_A) \leq \epsilon (2^km)^{-k+1}$, as desired.
\end{proof}

To apply \cref{lem:gen-k-ap-construction}, it will be convenient to have the following result that turns an $r$-coloring of $[N]$ without $k$-binomial patterns into a $k^2r$-coloring of $\TT$ with few $k$-binomial patterns.

\begin{lemma}
\label{lem:disc-to-cont-k-ap}
For $k\geq 4$, let $\phi$ be an $r$-coloring of $[N]$ that avoids monochromatic $k$-patterns and if $k$ is even also avoids nontrivial symmetrically colored $k$-APs. Then there exists a Jordan measurable $k^2r$-coloring $\Phi\colon \TT\to[k^2r]$ such that
\[\Pr_{x,y\in\TT}(x,x+y,\ldots,x+(k-1)y\text{ forms a $k$-binomial pattern in }\Phi)\lesssim_k \frac1N.\]
Furthermore, if $k=4$, the hypothesis that $\phi$ avoids monochromatic $k$-patterns can be dropped.
\end{lemma}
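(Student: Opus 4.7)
The plan is to define $\Phi\colon\TT\to[r]\times[k^2]$ by
\[
\Phi(x) = \bigl(\phi(\lfloor Nx\rfloor\bmod N),\ \lfloor k^2\{Nx\}\rfloor\bigr),
\]
which is piecewise constant on intervals of length $1/(k^2N)$, hence Jordan measurable with $k^2r$ colors. For random $x,y\in\TT$, write $Nx=p+u$ and $Ny=q+v$ with $p,q\in\{0,\ldots,N-1\}$ and $u,v\in[0,1)$, and set $e_i=\lfloor u+(i-1)v\rfloor\in\{0,\ldots,k-1\}$, $n_i=(p+(i-1)q+e_i)\bmod N$, and $\xi_i=u+(i-1)v-e_i\in[0,1)$. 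Then $\Phi(x+(i-1)y)=(\phi(n_i),\lfloor k^2\xi_i\rfloor)$, so a $k$-binomial pattern at $(x,y)$ in $\Phi$ requires both a $\phi$-match on the $n_i$'s and a sub-color match on the $\xi_i$'s. I would treat cases (a) and (b) of \cref{defn:k-binomial} separately.

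For case (a) (even $k$), the sub-color condition $\lfloor k^2\xi_i\rfloor=\lfloor k^2\xi_{k+1-i}\rfloor$ gives $|(k+1-2i)v-(e_{k+1-i}-e_i)|<1/k^2$ for each $i=1,\ldots,k/2$. The $i=k/2$ instance (coefficient $1$) forces $v$ within $1/k^2$ of $0$ or $1$ since $e_{k/2+1}-e_{k/2}\in\{0,1\}$; the remaining constraints combined with $e_1=0$ and the monotonicity and unit-step property of $(e_i)$ then pin $(e_1,\ldots,e_k)$ to be either $(0,\ldots,0)$ or $(0,1,\ldots,k-1)$. In either case $(n_i)$ is a genuine $k$-AP in $\ZZ/N\ZZ$ with common difference $q$ or $q+1$, and the $\phi$-match realizes it as a symmetrically colored $k$-AP; the hypothesis on $\phi$ then forces the common difference to vanish modulo $N$, pinning $q$ to at most two residues and contributing probability $O_k(1/N)$.

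For case (b), fix any $I\subset[k]$ with $|I|\ge3$ satisfying the sum condition and pick any $i_1<i_2<i_3$ in $I$. Substituting $\xi_i=u+(i-1)v-e_i$ into the left-hand side below, the $u$- and $v$-coefficients cancel by $(i_3-i_2)+(i_2-i_1)=i_3-i_1$ and $(i_3-i_2)(i_1-1)+(i_2-i_1)(i_3-1)=(i_3-i_1)(i_2-1)$, giving the clean identity
\[
(i_3-i_2)\xi_{i_1}+(i_2-i_1)\xi_{i_3}-(i_3-i_1)\xi_{i_2}=(i_3-i_1)e_{i_2}-(i_3-i_2)e_{i_1}-(i_2-i_1)e_{i_3}.
\]
The sub-color match gives $|\xi_{i_s}-\xi_{i_t}|<1/k^2$, bounding the left-hand side by $(i_3-i_1)/k^2\le(k-1)/k^2<1$; since the right-hand side is an integer, it vanishes. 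Translating back, $(i_3-i_2)n_{i_1}+(i_2-i_1)n_{i_3}=(i_3-i_1)n_{i_2}$ holds exactly (not merely mod $N$) with $a=i_3-i_2$, $b=i_2-i_1$, $a+b\le k-1$. The $\phi$-monochromaticity on $I$ and the no-monochromatic-$k$-pattern hypothesis on $\phi$ then force $n_{i_1}=n_{i_2}=n_{i_3}$; in particular $n_{i_1}=n_{i_2}$ gives $(i_2-i_1)q\equiv e_{i_1}-e_{i_2}\pmod{N}$, pinning $q$ to at most $\gcd(i_2-i_1,N)\le k-1$ residues. Summing over the $O_k(1)$ admissible $I$ yields probability $O_k(1/N)$.

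When $k=4$ the only $I\subset[4]$ with $|I|\ge3$ and the sum condition is $I=\{1,2,3,4\}$; monochromaticity of all four $\phi(n_i)$ automatically implies the pair equalities of case (a) (both for $\phi$ and for sub-colors), so one can run only the case (a) argument and avoid the monochromatic-$k$-pattern hypothesis. The main technical step is the case (a) rigidity, namely that the sub-coloring at resolution $1/(k^2N)$ together with the integrality of $(e_{k+1-i}-e_i)$ leaves only the two arithmetic $e$-sequences; the case (b) identity is then a parallel and cleaner manifestation of the same rigidity philosophy.
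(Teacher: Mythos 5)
Your construction $\Phi(x) = \bigl(\phi(\lfloor Nx\rfloor\bmod N),\lfloor k^2\{Nx\}\rfloor\bigr)$ is close in spirit to the paper's but differs in an essential way: the paper partitions $\TT$ into $k$ macro-intervals of length $1/k$ and then applies $\phi$ to a position index $b\in[N]$ at scale $1/(kN)$ \emph{within} a macro-interval (recording the macro-interval $a\in[k]$ and the fine sub-index $c\in[k]$ as part of the color). In your version $\phi$ is applied to the global coarse index $\lfloor Nx\rfloor$ at scale $1/N$. This seemingly cosmetic difference matters, and it is exactly where your case (b) argument has a gap.

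Concretely: from $|\xi_{i_s}-\xi_{i_t}|<1/k^2$ you correctly derive $(i_3-i_1)e_{i_2}-(i_3-i_2)e_{i_1}-(i_2-i_1)e_{i_3}=0$, and hence $(i_3-i_2)n_{i_1}+(i_2-i_1)n_{i_3}\equiv(i_3-i_1)n_{i_2}\pmod N$. But you then assert this relation ``holds exactly (not merely mod $N$),'' and that does not follow. Writing $n_i=p+(i-1)q+e_i-m_iN$, the left-hand side minus the right-hand side equals $-N\bigl[(i_3-i_2)m_{i_1}+(i_2-i_1)m_{i_3}-(i_3-i_1)m_{i_2}\bigr]$, and nothing in your argument pins this bracket to zero; the bracket is an alternating combination of floors and can be any integer of size $<k-1$, so the combination of the $n_i$'s can equal $\pm N,\pm 2N,\ldots$ Your subsequent invocation of the no-monochromatic-$k$-pattern hypothesis on $\phi\colon[N]\to[r]$ (which the paper formulates as an \emph{integer} equation $an_1+bn_2=(a+b)n_3$, see Definition~3.1) is therefore not justified. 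The paper avoids exactly this wrap-around issue with the macro-interval coordinate: because the index $b$ lives at scale $1/(kN)$ inside one block of length $1/k$ (shared by all $x_i$ with $i\in I$, since the $a$-coordinate is part of the color), the analogous computation produces a congruence $\bmod\,kN$ with both sides of magnitude $<kN$, and \emph{that} forces the exact integer identity. Your construction only yields $\bmod\,N$, and the bound $(k-1)N$ does not beat it. The same concern recurs in your case (a): the $(n_i)$ you obtain form a $k$-AP only in $\Z/N\Z$, whereas the paper's $b_i$'s form a genuine $k$-AP in $[N]$.

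This is a repairable gap but it requires an idea you did not supply: either add a $k$-valued macro-coordinate $\lfloor kx\rfloor$ to your color so that $\phi$ is applied within a window of length $1/k$ (which reproduces the paper's structure and upgrades the congruence to $\bmod\,kN$), or restate the hypothesis so that $\phi$ is a coloring of $\Z/N\Z$ avoiding $k$-patterns and symmetric $k$-APs in the mod-$N$ sense (this is what the colorings fed into the lemma from Lemmas~3.2 and~5.1 actually provide, but it is not what the lemma as stated assumes). As written, the ``holds exactly'' step is the missing link, and without it the proof does not go through. The case (a) rigidity argument (pinning $(e_i)$ to $(0,\ldots,0)$ or $(0,1,\ldots,k-1)$ via the $i=k/2$ constraint) and the telescoping identity in case (b) are both correct and nicely executed; the issue is confined to the translation from the $e$-identity to an integer identity on the $n_i$'s.
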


\begin{proof}
We cut $\TT$ into $k$ intervals and color each with $k$ interlaced copies of $\phi$; each of the $k^2$ copies of $\phi$ uses a disjoint set of colors. More precisely, partition $\TT$ into $k^2N$ equal-length intervals $\{I_{a,b,c}\}_{a,c\in[k];b\in[N]}$ where
\[
I_{a,b,c} = \left[ \frac{(a-1)kN+(b-1)k+c-1}{k^2N}, \frac{(a-1)kN+(b-1)k+c}{k^2N} \right) \subset \TT.
\]
Then define $\Phi\colon\TT\to[k^2r]$ by $\Phi(x)=((a-1)k+c-1)r+\phi(b)$ for $a,b,c$ such that $x\in I_{a,b,c}$. Suppose that $x_1,\ldots,x_k$ forms a $k$-binomial pattern in $\Phi$. Say $x_i\in I_{a_i,b_i,c_i}$. By definition, one of the following holds.
\begin{enumerate}[(i)]
	\item $k$ is even and $\Phi(x_i) = \Phi(x_{k+1-i})$ for all $i = 1, \dots, k/2$;
	\item there is some $I \subset [k]$ with $\abs{I} \ge 3$ and $\sum_{i \in I} (-1)^i \binom{k-1}{i-1} = 0$ and $\{x_i\}_{i \in I}$, are assigned the same color under $\Phi$.
\end{enumerate}

Let us analyze the two cases separately.  In both cases, we want to deduce that $(a_1,b_1,c_1)$, $\dots$, $(a_k,b_k,c_k)$ cannot all be distinct.

Suppose (i) holds. 
Since $\Phi(x_{i}) = \Phi(x_{k+1-i})$, we see that $a_i=a_{k+1-i}$ and $c_i=c_{k+1-i}$. Recall that \[x_i \in I_{a_i,b_i,c_i} = \frac{(a_i-1)N+(b_i-1)}{kN} +\left[\frac{c_i-1}{k^2N},\frac{c_i}{k^2N}\right).\] For $i=k/2$, this implies that the common difference $x_{k/2+1}-x_{k/2}=d/kN+\delta$ for some $d\in\{-(N-1),\ldots,N-1\}$ and $\delta\in(-1/k^2N,1/k^2N)$. Without loss of generality, suppose that $\delta\in[0,1/k^2N)$.

This implies that the smallest $i>k/2+1$ with $c_i\neq c_{k/2}$ (if it exists) satisfies $c_i\equiv c_{k/2}+1 \pmod k$. Similarly, the largest $i<k/2$ with $c_i \neq c_{k/2}$ satisfies  $c_i \equiv c_{k/2} - 1 \pmod k$. However, since we assumed that $c_i= c_{k+1-i}$ for all $i$, this implies that actually $c_1=c_2=\cdots=c_k$.

Since $|x_{k/2+1}-x_{k/2}|<1/k$, the same argument shows that $a_1=a_2=\cdots=a_k$. Therefore $b_1,\ldots, b_k$ form a symmetrically colored $k$-AP in $\phi$. Using the fact that $\phi$ has no nontrivial symmetrically colored $k$-APs, we find that $b_1 = \cdots = b_k$.

Now suppose (ii) holds.
Since $\Phi(x_i)$ are equal for all $i\in I$, we must have some $a,c\in[k]$ such that $a_i=a$ and $c_i=c$ for all $i\in I$. Now we wish to show that three of $b_1,\ldots,b_k$ form a monochromatic $k$-pattern in $\phi$.

Take $i_1<i_2<i_3\in I$. Since $x_1,\ldots,x_k$ form a $k$-AP in $\TT$ we know that
\[(i_3-i_2)(x_{i_2}-x_{i_1})\equiv (i_2-{i_1})(x_{i_3}-x_{i_2})\pmod 1.\]
Using the fact that \[x_i\in I_{a,b_i,c}=\frac{(a-1)kN+c}{k^2N}+\left[\frac{(b_i-1)k-1}{k^2N},\frac{(b_i-1)k}{k^2N}\right)\] for all $i\in I$, we conclude that
\[\left[(i_3-i_2)\frac{b_{i_2}-b_{i_1}}{kN}+\left(\frac{i_2-i_3}{k^2N},\frac{i_3-i_2}{k^2N}\right)\right]\cap \left[(i_2-i_1)\frac{b_{i_3}-b_{i_2}}{kN}+\left(\frac{i_1-i_2}{k^2N},\frac{i_2-i_1}{k^2N}\right)\right]\neq\emptyset.\]
Since $i_3-i_1<k$, this further implies that
\[(i_3-i_2)(b_{i_2}-b_{i_1})\equiv(i_2-i_1)(b_{i_3}-b_{i_2})\pmod{kN}.\]
Finally, since $|b_{i_2}-b_{i_1}|,|b_{i_3}-b_{i_2}|<N$ and $|i_3-i_2|+|i_2-i_1|<k$, this gives
\[(i_3-i_2)(b_{i_2}-b_{i_1})=(i_2-i_1)(b_{i_3}-b_{i_2}).\]
This implies that either $b_{i_1},b_{i_2},b_{i_3}$ are a monochromatic $k$-pattern in $\phi$ or are all equal. Since the former cannot occur, we conclude that $b_{i_1}=b_{i_2}=b_{i_3}$.

In either case we have shown that if $x_1,\ldots,x_k$ forms a $k$-binomial pattern in $\Phi$ then there exist at least two distinct indices $i\neq i'\in [k]$ such that $(a_i,b_i,c_i)=(a_{i'},b_{i'},c_{i'})$, i.e., such that $x_i,x_{i'}$ lie in the same interval of length $1/k^2N$. There are at most $k^2$ choices for $i,i'$, and then $x_{i'}$ lies in the same interval as $x_i$ with probability $1/k^2N$. Finally, once $x_i,x_{i'}$ are fixed, there are at most $k$ choices for the whole sequence $x_1,\ldots, x_k$. Therefore we see that the $k$-binomial pattern density in $\Phi$ is at most $k/N$, as desired.

Finally if $k=4$, note that if (ii) holds, then $I=\{1,2,3,4\}$ since it corresponds to a subset of $\{-1,3,-3,1\}$ which sums to 0. Thus case (ii) when $k=4$ corresponds to a monochromatic 4-AP in $\Phi$ which was already ruled out by case (i). This completes the proof.
\end{proof}

\cref{thm:odd-torus} for odd length APs then follows.

\begin{proof}[Proof of \cref{thm:odd-torus}]
We will apply \cref{lem:gen-k-ap-construction} to prove the claim. 

By \cref{lem:avoid-pattern} there is a coloring $\phi\colon [N] \to [r]$ that avoids monochromatic $k$-patterns where $N=r^{\Theta_k(\log r)}$. 
Since $k$ is odd, $\phi$ satisfies the hypothesis of \cref{lem:disc-to-cont-k-ap} and thus there is a $k^2r$-coloring $\Phi\colon\TT\to[k^2r]$ with $k$-binomial pattern density $\lesssim_k 1/N$.

Next, by \cref{lem:avoid-nontriv}, there exists a $k^2r$-element subset $S$ of $\Z/m\Z$ avoiding non-trivial solutions to the $k$-binomial equation, where $m=\Theta_k(r^{k-1})$. 
 
Applying \cref{lem:gen-k-ap-construction} to $\Phi$ and $S$ yields $1_A\subset \TT^2$ with constant first marginal $1/(2^km)$ and $\wt \Lambda_k(1_A) \lesssim_k m^{-\Theta_k(\log m)}$. By definition $\wt \rho_k(1/(2^km)) \lesssim_k m^{-\Theta_k(\log m)}$. By the monotonicity of $\wt\rho_k$ this implies the desired bound on $\wt\rho_k(\alpha)$.
\end{proof}

\section{Even length APs: from colorings to uniform sets}
\label{sec:even-torus-const}

\begin{lemma}[Tensor power construction] \label{lem:power}
Suppose there exists an $r$-coloring of $\ZZ/N\ZZ$ avoiding symmetrically colored $k$-APs.
Then for every positive integer $\ell$, there exists an $r^\ell$-coloring of $\ZZ/N^\ell\ZZ$ avoiding symmetrically colored $k$-APs.	
\end{lemma}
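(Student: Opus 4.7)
The plan is to use the natural base-$N$ product coloring. Let $\phi \colon \ZZ/N\ZZ \to [r]$ be the hypothesized coloring avoiding symmetrically colored $k$-APs, and identify each $n \in \ZZ/N^\ell\ZZ$ with its unique base-$N$ digit tuple $(n_0, n_1, \ldots, n_{\ell-1}) \in \{0,1,\ldots,N-1\}^\ell$, i.e.\ $n = \sum_{j=0}^{\ell-1} n_j N^j$. Define $\Phi \colon \ZZ/N^\ell\ZZ \to [r]^\ell$ by
\[
\Phi(n) \coloneqq \bigl(\phi(n_0),\, \phi(n_1),\, \ldots,\, \phi(n_{\ell-1})\bigr),
\]
which is an $r^\ell$-coloring. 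The goal is to argue that $\Phi$ avoids symmetrically colored $k$-APs.

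Suppose for contradiction that $n, n+d, \ldots, n+(k-1)d$ is a symmetrically colored $k$-AP under $\Phi$ with $d \not\equiv 0 \pmod{N^\ell}$. Choosing a lift of $d$ to $\{1,\ldots,N^\ell-1\}$, let $j^\ast \in \{0,1,\ldots,\ell-1\}$ be the unique index with $N^{j^\ast} \mid d$ but $N^{j^\ast+1} \nmid d$, and write $d = N^{j^\ast} e$ with $e \not\equiv 0 \pmod N$. Since $N^{j^\ast} \mid d$, adding $id$ does not disturb digits below position $j^\ast$ and leaves no carry into position $j^\ast$ from lower digits, so a direct computation in base $N$ gives
\[
(n+id)_{j^\ast} \;\equiv\; n_{j^\ast} + i\,(e \bmod N) \pmod N \qquad \text{for each } i=0,1,\ldots,k-1.
\]
Setting $r_0 \coloneqq e \bmod N \in \{1,\ldots,N-1\}$, the $j^\ast$-th digits form a $k$-AP in $\ZZ/N\ZZ$ with nonzero common difference $r_0$, hence a \emph{non-trivial} $k$-AP.

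Finally, because $\Phi(n+id) = \Phi(n+(k-1-i)d)$ for $i=0,\ldots,k/2-1$ (the symmetric-coloring hypothesis), projecting to the $j^\ast$-th coordinate of $\Phi$ yields $\phi((n+id)_{j^\ast}) = \phi((n+(k-1-i)d)_{j^\ast})$ for the same $i$. Thus the non-trivial $k$-AP of $j^\ast$-th digits constructed above is symmetrically colored under $\phi$, contradicting the hypothesis on $\phi$. The main (and really only) subtlety is handling the base-$N$ carries correctly; choosing $j^\ast$ to be the $N$-adic valuation of $d$ is exactly what ensures that no carries from positions $<j^\ast$ pollute the $j^\ast$-th digit, so the $j^\ast$-coordinate projection honestly converts an AP in $\ZZ/N^\ell\ZZ$ into an AP in $\ZZ/N\ZZ$.
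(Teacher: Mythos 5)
Your proof is correct and takes essentially the same approach as the paper: color by base-$N$ digits and project onto the critical digit position. The only cosmetic difference is that you identify the key index $j^\ast$ as the $N$-adic valuation of $d$, while the paper describes it as the least significant digit position where the $k$ terms are not all equal; these are the same index, since below position $v(d)$ all digits agree and at position $v(d)$ the digits form an AP with nonzero common difference and hence are not all equal.
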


\begin{proof}
The idea is to write an element of $\ZZ/N^\ell \ZZ$ in its base $N$ expansion, and then color each digit separately. More precisely, given $\phi \colon \ZZ/N\ZZ \to [r]$ avoiding symmetrically colored $k$-APs, define the coloring $\psi \colon \ZZ/N^\ell \ZZ \to [r]^\ell$ as follows. Every element of $\ZZ/N^\ell\ZZ$ can be written uniquely as $n = n_0 + n_1 N + \cdots + n_{\ell -1}N^{\ell - 1}$ with $n_0, \ldots, n_{\ell -1} \in \set{0, 1, \ldots, N-1}$. Set $\psi(x) = (\phi(n_0), \ldots, \phi(n_{\ell -1}))$. 
Then $\psi$ has no symmetrically colored $k$-APs. 
Indeed, suppose $a_1,\ldots,a_k \in \ZZ/N^\ell \ZZ$ is a non-trivial $k$-AP.
Let $i$ to be the index of the least significant base-$N$ digit where $a_1,\ldots,a_k$ are not all equal. 
By considering mod $N^{i+1}$, the $i$-th digits $a_{1,i},\ldots,a_{k,i}$ form a non-trivial $k$-AP in $\ZZ/N\ZZ$, and thus they are not symmetrically colored under $\phi$. Hence $a_1,\ldots,a_k$ is not symmetrically colored under $\psi$.
\end{proof}

\begin{proof}[Proof of \cref{thm:4-torus}]
Suppose there exists an $r$-coloring of $\ZZ/N\ZZ$ avoiding symmetrically colored 4-APs.
Let $0 < \alpha < 1/2$.
By \cref{lem:power}, for every positive integer $\ell$, there is an $r^\ell$-coloring of $\ZZ/N^\ell\ZZ$ that avoids symmetrically colored 4-APs.
By \cref{lem:disc-to-cont-k-ap}, there is a $16r^\ell$-coloring of $\TT$ whose density of 4-binomial patterns (i.e., symmetrically colored 4-APs) is $\lesssim N^{-\ell}$.
By \cref{lem:13avoid}, for some $m = O(r^{2\ell})$, there is a $16r^{\ell}$-element subset of $\Z/m\Z$ that avoids solutions to the $4$-binomial equation.
By \cref{lem:gen-k-ap-construction},  we obtain $A \subset \TT^2$ with constant first marginal $1/(16m)\gtrsim r^{-2\ell}$ and $\wt \Lambda_4(1_A) \lesssim N^{-\ell}r^{-6\ell}$. 
Thus we can choose $\ell =  (1/2)\log_r(1/\alpha) - O_{r}(1)$ so that the marginals are $1/(16m) \ge \alpha$, and we have
\[
\wt \Lambda_4(1_A) \lesssim N^{-\ell}r^{-6\ell} \lesssim_{N,r} \alpha^{3+(1/2)\log_r N}.
\]
This completes the proof by the monotonicity of $\wt\rho_4$.
\end{proof}

\begin{proof}[Proof of \cref{thm:even-color-torus}]
We will apply \cref{lem:gen-k-ap-construction} to prove the claim. 

First, we show that there exists a coloring $\phi\colon [N^\ell] \to [u]$ that avoids monchromatic $k$-patterns and symmetrically colored $k$-APs where $u=r^\ell e^{O(\sqrt {\ell \log N})}$. Let $\psi\colon  \Z/N\Z \to [r]$ be the $r$-coloring of $\Z/N\Z$ avoiding symmetrically colored $k$-APs. By \cref{lem:power}, for any $\ell \ge 1$, there exists an $r^\ell$-coloring $\psi_\ell$ of $\Z/N^\ell\Z$ avoiding symmetrically colored $k$-APs.
We also apply \cref{lem:avoid-pattern} to obtain $\chi \colon \Z/N^\ell\Z\to [e^{O(\sqrt {\ell \log N})}]$ avoiding monochromatic $k$-patterns. Defining $\phi$ to be the product coloring of $\psi_\ell$ and $\chi$ gives the desired properties.

Then by \cref{lem:disc-to-cont-k-ap}, there is a $k^2u$-coloring $\Phi$ of $\TT$ with $k$-binomial pattern density $\lesssim_k N^{-\ell}$.

Next, by definition of $\beta_k$, there exists a $k^2u$-element subset $S$ of $\Z/m\Z$ avoiding non-trivial solutions to the $k$-binomial equation, where $m=(k^2u)^{1/\beta_k+o_{\ell\to \infty}(1)}$. 
 
Applying \cref{lem:gen-k-ap-construction} to $\Phi$ and $S$ yields $1_A\subset \TT^2$ with constant first marginal \[1/(2^km)\gtrsim_k r^{-\ell/\beta_k+o_{\ell\to\infty}(1)}e^{-O(\sqrt{\ell\log N})}\] and \[\wt \Lambda_k(1_A) \lesssim_k m^{-k+1}N^{-\ell}\lesssim_k u^{-(k-1)/\beta_k+o_{\ell\to\infty}(1)}N^{-\ell}=r^{-\ell(k-1)/\beta_k+o_{\ell\to\infty}(1)}N^{-(1+o_{\ell\to\infty}(1))\ell}.\]

Taking $\ell=\beta_k\log_r(1/\alpha)-O_{k,r,N}(1)$ gives $1/(2^km)\geq\alpha$ for all sufficiently small $\alpha>0$ and
\[\tilde\Lambda_k(1_A)\lesssim_{N,k,r}\alpha^{(k-1+\log_rN)/\beta_k-o_{\alpha\to0}(1)}.\]
Combining this with the monotonicity of $\wt\rho_k$ and the bound $\beta_k\leq k-1$ gives the desired result.
\end{proof}

\section{Extracting a coloring}\label{sec:extract}

In this section we prove \cref{thm:even-k-converse}, the converse to our main theorem. We do this via a randomized procedure that extracts a colorings from a function. More precisely, suppose that we are given a function $F \colon \TT^2 \to [0,1]$ with constant first marginal $\alpha$ and small $\wt \Lambda_k(F)$. For an appropriate choice of $r,N$ we construct an $r$-coloring of $\Z/N\Z$ that avoids symmetrically colored $k$-APs.

The main idea is as follows. We select $y_1,y_2,\ldots,y_r\in\TT$ at random. We convert $F$ to a partial coloring of $\TT$ by giving $x$ the color $i$ if $i$ is the smallest integer such that $F(x,y_i)$ is at least some threshold. We then convert this partial coloring of $\TT$ into a partial coloring of $[N]$ by randomly embedding an $N$-AP into $\TT$. We will prove that, assuming $r,N$ are chosen appropriately, with positive probability this partial coloring is actually a full coloring and avoids symmetrically colored $k$-APs.

\cref{thm:even-k-converse} follows immediately from the next lemma.

\begin{lemma}
For $k \ge 4$ even there exists $c_k>0$ such that the following holds. Suppose $F \colon \TT^2 \to [0,1]$ has constant first marginal $\alpha \in (0,1)$ and $\wt\Lambda_k (F) = \eta$. 
Then whenever
\[
r \ge (\alpha k)^{-1}\log(1/\eta)
\quad\text{and}\quad
N \le c_k \eta^{-\frac{1}{2k}} \alpha^{\frac{3k}{4}} (\log(1/\eta))^{-\frac{k}{4}},
\]
there is an $r$-coloring of $[N]$ that avoids symmetrically colored $k$-APs.
\end{lemma}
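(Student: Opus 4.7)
The plan is to use the two-stage randomized construction sketched in the prose above the lemma. Fix a threshold $\tau = \alpha/2$ and sample $y_1,\ldots,y_r \in \TT$ i.i.d.\ uniformly, forming the partial coloring $\phi_\TT\colon \TT \to [r]$ defined by $\phi_\TT(x) = \min\{c \in [r] : F(x,y_c) \ge \tau\}$ (left undefined if no such $c$ exists). Independently, sample $s,t \in \TT$ uniformly and color $n \in [N]$ by $\phi(n) = \phi_\TT(s+nt)$. By the probabilistic method it suffices to show that the expected number of uncolored $n$ and of nontrivial symmetrically colored $k$-APs in $[N]$ are each less than $1/2$, as then with positive probability $\phi$ is a total $r$-coloring of $[N]$ with no such APs.

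Since $F$ has constant first marginal $\alpha$ and $F \le 1$, we have $\Pr_y[F(x,y) \ge \tau] \ge (\alpha-\tau)/(1-\tau) \ge \alpha/2$ for every $x$, and since $s+nt$ is uniformly distributed in $\TT$, the probability that $\phi(n)$ is undefined is at most $(1-\alpha/2)^r \le e^{-\alpha r/2} \le \eta^{1/(2k)}$ from the lower bound on $r$. Summing gives expected uncolored count at most $N\eta^{1/(2k)} < 1/2$ for $c_k$ small enough. For symmetrically colored APs: each nontrivial $k$-AP in $[N]$ maps to a uniformly random $k$-AP in $\TT$ (since $dt$ is uniform on $\TT$ for $d \ne 0$), so the expected count is at most $O(N^2)\cdot P$, with $P$ the probability a uniformly random $k$-AP in $\TT$ is symmetrically colored under $\phi_\TT$. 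A union bound over the pair colors $\vec c = (c_1,\ldots,c_{k/2}) \in [r]^{k/2}$, followed by $\mathbf{1}[F \ge \tau] \le F/\tau$, yields
\[
P \le \tau^{-k}\sum_{\vec c}\E_{x,y}\prod_{i=1}^{k/2} F(x_i,y_{c_i})F(x_{k+1-i},y_{c_i}),
\]
where each summand is an integral of $\prod_l F(x_l,z_l)$ at a $k$-tuple with $z_l = z_{k+1-l}$---a trivial solution of the $k$-binomial equation, hence $(x,z) \in V$. Grouping by the partition of $[k/2]$ induced by $\vec c$, the dominant contribution from $\vec c$ with distinct entries is $r^{k/2}\cdot I$, where $I := \E_{x\,\text{AP}}\prod_{i=1}^{k/2}\Psi(x_i,x_{k+1-i})$ and $\Psi(x,x') := \int F(x,y)F(x',y)\,dy$.

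The main technical obstacle is an inequality of the form $I \le C_k\,\eta^{1/k}$ (with analogous control for the subdominant partition terms), which transforms the sum over $\vec c$ into a bound involving $\wt\Lambda_k(F) = \eta$. Once this is in hand, $P \lesssim_k \tau^{-k}r^{k/2}\eta^{1/k}$, and substituting $\tau = \alpha/2$ and $r \asymp \log(1/\eta)/(\alpha k)$ exactly matches the hypothesis on $N$ to give $N^2 P < 1/2$ for $c_k$ small enough; a union bound over the two bad events then produces the desired coloring. The obstacle is genuine because the paired subset $\{z_l = z_{k+1-l}\}$ has codimension $k/2 - 1$ inside the $(k-1)$-dimensional solution variety of the $k$-binomial equation, so the paired integral is not a priori controlled by $\wt\Lambda_k(F)$. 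My plan for the inequality is a Cauchy--Schwarz or H\"older decomposition on the $y$-variables---in the spirit of Loomis--Whitney or Gowers--Cauchy--Schwarz---that unfolds the pairing constraint onto the full solution variety.
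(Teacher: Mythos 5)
Your overall framework matches the paper's: the random threshold coloring $\phi(n) = \min\{j: F(x_0+nx_1, y_j) \ge \alpha/2\}$, the Markov bound for the undefined probability, and the union bound over symmetric color patterns followed by Markov (indicator $\le F/\tau$) to land on an integral over the paired constraint. You also correctly isolate the central step as a bound on $I := \E_{x\text{ AP}}\prod_{i=1}^{k/2}\Psi(x_i,x_{k+1-i})$ in terms of $\eta$, and your plan -- a Cauchy--Schwarz unfolding of the pairing constraint onto the full $k$-binomial variety -- is exactly what works. The paper executes it as follows: starting from $\eta = \wt\Lambda_k(F)$, insert the extra factors $F(w_0+(k+1-i)w_1, z_i)$ (allowed since $F\le 1$), observe that because $k$ is even the $k$-binomial constraint is invariant under $z_i \leftrightarrow z_{k+1-i}$ (the coefficients of $z_i$ and $z_{k+1-i}$ are negatives), so after conditioning on $z := \sum_{i\le k/2}(-1)^i\binom{k-1}{i-1}z_i$ the integrand factorizes into a perfect square, and then Jensen gives $\eta \ge I^2$, i.e., $I \le \eta^{1/2}$, which is in fact stronger than the $\eta^{1/k}$ you were aiming for.

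However, there is a real gap in routing through the partition structure of $\vec c$. The bound $I \le \eta^{1/2}$ only handles tuples $\vec c$ with distinct entries. For $\vec c$ with $m<k/2$ distinct colors, dropping factors by $F\le 1$ leaves a similar integral $I_m$ with fewer pairs, and $I_m$ is \emph{not} controlled by $\eta$ -- e.g.\ for $F(x,y)=\mathbf 1[y<\alpha]$ one gets $I_1=\alpha$ while $\eta\approx\alpha^{k-1}$, so $I_1\gg\eta^{1/2}$; and the trivial bound of $1$ on the $O_k(r^{k/2-1})$ such $\vec c$ is not good enough either, since $r\eta^{1/2}\to0$ makes $r^{k/2-1}$ dominate $r^{k/2}\eta^{1/2}$. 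The paper sidesteps this case analysis entirely via a H\"older (power-mean) step \emph{before} expanding: $\E_{x_0,x_1}[\E_y\prod F]\le\bigl(\E_{x_0,x_1}(\E_y\prod F)^{k/2}\bigr)^{2/k}$. Expanding the $k/2$-th power replaces $(y_1,\ldots,y_r)$ by $k/2$ fresh independent copies; keeping only the $j$-th pair from the $j$-th copy produces i.i.d.\ variables $z_j=y^{(j)}_{\psi(j)}$ regardless of whether $\psi$ is injective, so one lands on $I^{2/k}\le\eta^{1/k}$ uniformly in $\psi$. This decoupling step is the missing ingredient in your plan, and it is also the reason the exponent on $\eta$ in the statement is $1/k$ rather than $1/2$.
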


\begin{proof}
Pick $x_0, x_1, y_1, \dots, y_r \in \TT$ independently and uniformly at random. We define $\phi \colon [N] \to [r]$ by setting $\phi(i)$ to be the least $j$ such that $F(x_0 + ix_1, y_j) \ge \alpha/2$, and leaving $\phi(i)$ undefined if no such $j$ exists. By Markov's inequality, $\PP_{x_0,x_1,y_j}(F(x_0+ix_1,y_j)<\alpha/2)\leq 1-\alpha/2$ for each choice of $i,j$. Since the $y_j$ are independent, by taking a union bound of the choices of $i$,
\[
\PP(\text{$\phi(i)$ is undefined for some $i$})
\le N \paren{1 - \alpha/2}^r \le N e^{-r \alpha/2} < \frac12,
\]
provided that $c_k$ is sufficiently small.

Next, let us upper bound the probability that there exists a symmetrically colored $k$-AP. 
We have
\begin{align*}
\eta = \wt \Lambda_k(F) 
&= \EE_{\substack{w_0,w_1, z_1, \dots, z_k: \\ \sum_{i=1}^j(-1)^i \binom{k-1}{i-1} z_i}= 0} \prod_{i=1}^k F(w_0 + iw_1, z_i)
\\
&\ge
\EE_{\substack{w_0,w_1, z_1, \dots, z_k: \\ \sum_{i=1}^j(-1)^i \binom{k-1}{i-1} z_i}= 0} \prod_{i=1}^k F(w_0 + iw_1, z_i)F(w_0 + (k+1-i) w_1, z_i)
\\
&=
\EE_{w_0, w_1, z} \paren{\EE_{\substack{z_1, \dots, z_{k/2}: \\ \sum_{i=1}^{k/2} (-1)^i \binom{k-1}{i-1} z_i= z}} \prod_{i=1}^{k/2} F(w_0 + iw_1, z_i)F(w_0 + (k+1-i) w_1, z_i)}^2
\\
&\ge 
\paren{\EE_{w_0, w_1, z} \EE_{\substack{z_1, \dots, z_{k/2}: \\ \sum_{i=1}^{k/2} (-1)^i \binom{k-1}{i-1} z_i= z}} \prod_{i=1}^{k/2} F(w_0 + iw_1, z_i)F(w_0 + (k+1-i) w_1, z_i)}^2
\\
&=
\paren{\EE_{w_0, w_1, z_1, \dots, z_{k/2}} \prod_{i=1}^{k/2} F(w_0 + iw_1, z_i)F(w_0 + (k+1-i) w_1, z_i)}^2.
\end{align*}
In the second line we used the trivial bound $F\leq 1$ to insert the $F(w_0+(k+1-i)w_1,z_i)$ terms. 

Given a non-trivial $k$-AP $a_1, \dots, a_k \in [N]$ and a map $\psi \colon [k] \to [r]$ that is symmetric in the sense of $\psi(i) = \psi(k+1-i)$ for all $i \le k/2$,  we have
\begin{align*}
\MoveEqLeft \PP_\phi (\phi(a_i) = \psi(i) \text{ for all } i\in[k])
\\
&\le 
\PP_{x_0, x_1, y_1, \dots, y_r} 
\paren{ F(x_0 + a_i x_1, y_{\psi(i)}) \ge \frac{\alpha}{2}\text{ for all } i\in[k]}
\\
&\le 
\paren{\frac{2}{\alpha}}^k \EE_{x_0, x_1, y_1, \dots, y_r}  \prod_{i=1}^k F(x_0 + a_i x_1, y_{\psi(i)})
\\
&\le 
\paren{\frac{2}{\alpha}}^k \paren{\EE_{x_0, x_1} \paren{\EE_{y_1, \dots, y_r}  \prod_{i=1}^k F(x_0 + a_i x_1, y_{\psi(i)})}^{k/2}}^{2/k}
\\
&=
\paren{\frac{2}{\alpha}}^k \paren{\EE_{x_0, x_1} \paren{\EE_{y_1, \dots, y_r}  \prod_{i=1}^{k/2} F(x_0 + a_i x_1, y_{\psi(i)})F(x_0 + a_{k+1-i} x_1, y_{\psi(i)})}^{k/2}}^{2/k}
\\
&\le 
\paren{\frac{2}{\alpha}}^k
\paren{\EE_{x_0, x_1, z_1, \dots, z_{k/2}}
\prod_{i=1}^{k/2} 
F(x_0 + a_i x_1, z_i)
F(x_0 + a_{k+1-i} x_1, z_{k+1-i})
}^{2/k}
\\
&\le \paren{\frac{2}{\alpha}}^k \eta^{1/k}.
\end{align*}
Here the third line follows by Markov's inequality and the fourth by H\"older's inequality. To see the sixth line we expand the $k/2$-th power and again use the trivial bound of $F\leq 1$ to drop all but $k/2$ of the terms in each product. The last line follows from the previous calculation after the change of variables $w_0=x_0+ax_1$ and $w_1=dx_1$ where the $k$-AP $a_1,\ldots,a_k$ is given by $a_i=a+id$.

There are at most $N^2$ choices for the $k$-AP $a_1, \dots, a_k \in [N]$ and at most $r^{k/2}$ choices for $\psi$, so by a union bound we have
\[
\PP(\phi \text{ contains a symmetrically colored $k$-AP})
\le N^2 r^{k/2} \paren{\frac{2}{\alpha}}^k \eta^{1/k} < \frac12.
\]
The last inequality follows for $r=\lceil(\alpha k)^{-1}\log(1/\eta)\rceil$ and $c_k$ sufficiently small. It follows that with positive probability, $\phi$ is an $r$-coloring of $[N]$ avoiding symmetrically colored $k$-APs.
\end{proof}

\section{General patterns}

In this section, we prove \cref{thm:general_pattern} and \cref{thm:general_reduction}. While the main proof strategy is the same as before, there are some additional technical challenges that we need to address.

\begin{definition}
Let $\bfa = (a_1, \dots, a_k)$ have increasing integer coordinates. The \emph{$\bfa$-coefficients} $c(\bfa)=(c_1, \dots, c_k)$ are defined as $c_i = \prod_{j\ne i} (a_i - a_j)$ for $1\le i\le k$. Define the \emph{$\bfa$-binomial equation} in variables $n_1,\ldots,n_k$ to be
    \[\sum_{i=1}^kn_i/c_i=0.\]
\end{definition} 

Note that for $\bfa=(0,1,\ldots,k-1)$ we have $c_i=(-1)^{k-i}(i-1)!(k-i)!$ so in this case the $\bfa$-binomial equation agrees with the $k$-binomial equation (up to a constant multiplicative factor of $(-1)^k(k-1)!$). The reason the $k$-binomial equation appears for $k$-APs is because the $(k-2)$-nd power of a $k$-AP satisfies the $k$-binomial equation. More precisely we have the equality of polynomials
\begin{equation}
\label{eq:k-binom-equality}
\sum_{i=1}^k(-1)^i\binom{k-1}{i-1} (x+(i-1)y)^{k-2}=0.
\end{equation}

We first show that this property extends to the $\bfa$-binomial equation for all $\bfa$.

\begin{lemma}
\label{lem:a-binom-eq}
Let $\bfa = (a_1, \dots, a_n)$ have increasing integer coordinates. Say $c(\bfa)=(c_1, \dots, c_n)$ are the corresponding $\bfa$-coefficients. Then we have the equality of polynomials $\sum_{i=1}^n (x+a_iy)^{n-2}/c_i=0$.
\end{lemma}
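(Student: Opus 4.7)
The plan is to recognize the sum as a classical divided-difference / Lagrange-interpolation identity. Recall that if $a_1,\dots,a_n$ are distinct and $f$ is any polynomial in one variable of degree at most $n-2$, then
\[
\sum_{i=1}^n \frac{f(a_i)}{\prod_{j\ne i}(a_i - a_j)} \;=\; 0.
\]
This is the standard fact that the $n$-th divided difference $f[a_1,\dots,a_n]$ vanishes for any $f$ of degree $<n-1$; equivalently, it is obtained by comparing the coefficient of $z^{n-1}$ on both sides of the Lagrange interpolation identity
\[
f(z) \;=\; \sum_{i=1}^n f(a_i) \prod_{j\ne i} \frac{z-a_j}{a_i-a_j},
\]
which holds whenever $\deg f \le n-1$ (and the leading coefficient of $f$ is $0$ when $\deg f \le n-2$).

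To prove the lemma, I would treat $x,y$ as formal parameters and set $f(z) := (x + zy)^{n-2}$, viewing it as a polynomial in the single variable $z$ with coefficients in $\mathbb{Z}[x,y]$. Its degree in $z$ is at most $n-2 < n-1$. Applying the vanishing of the divided difference at the distinct points $a_1,\dots,a_n$ then yields
\[
\sum_{i=1}^n \frac{(x+a_iy)^{n-2}}{\prod_{j\ne i}(a_i-a_j)} \;=\; \sum_{i=1}^n \frac{(x+a_iy)^{n-2}}{c_i} \;=\; 0,
\]
as an identity in $\mathbb{Z}[x,y]$, which is exactly the claim.

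There is no real obstacle here — the only thing to take care of is ensuring the divided-difference identity is invoked over the right ring (it works coefficientwise, or one can simply argue that both sides are polynomials in $x,y$ that agree upon every specialization of $x,y$ to rationals, hence are equal as polynomials). As a sanity check, specializing to $\bfa = (0,1,\dots,k-1)$ gives $c_i = (-1)^{k-i}(i-1)!(k-i)!$, so the identity reduces, after clearing the factor $(-1)^k/(k-1)!$, to $\sum_{i=1}^k (-1)^i\binom{k-1}{i-1}(x+(i-1)y)^{k-2}=0$, recovering \cref{eq:k-binom-equality}.
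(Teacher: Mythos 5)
Your proof is correct, and it takes a cleaner, more classical route than the paper's. The paper first reduces to showing $\sum_{i=1}^n a_i^t/c_i = 0$ for each $0 \le t \le n-2$, and then proves each of these instances by clearing the Vandermonde denominator and arguing that the resulting expression is an alternating polynomial in $a_1,\dots,a_n$ of degree strictly less than $\binom{n}{2} = \deg V$, hence identically zero. You instead recognize the whole sum at once as the vanishing of the $n$-point divided difference of the degree-$(n-2)$ polynomial $z \mapsto (x+zy)^{n-2}$, justified by comparing the $z^{n-1}$ coefficient in the Lagrange interpolation formula. The two proofs are really establishing the same underlying fact (the divided difference of a polynomial of degree $< n-1$ at $n$ distinct points vanishes); the paper re-derives this coefficientwise via the alternating-polynomial/Vandermonde degree count, while you invoke Lagrange interpolation directly and avoid the monomial-by-monomial reduction. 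Your version is shorter and arguably more transparent; the paper's has the minor advantage of being self-contained at the level of elementary symmetric-function manipulations. Your final sanity check against \cref{eq:k-binom-equality} is a nice touch and matches the specialization the paper uses.
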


\begin{proof}
We first show that $\sum_{i=1}^n a_i^t/c_i=0$ holds for any $0\le t\le n-2$. Since the $a_i$ are distinct, it suffices to show that $\prod_{1\leq j<k\leq n}(a_j-a_k)\sum_{i=1}^n a_i^t/c_i=0$. Expanding, this reduces to showing that
\begin{equation*}
f_t\coloneqq\sum_{i=1}^n (-1)^ia_i^t\prod_{\genfrac{}{}{0pt}{}{1\leq j<k\leq n}{j,k\neq i}}(a_j-a_k)=0\qquad\text{for all }0\leq t\leq n-2.
\end{equation*}
It is easy to check that $f_t(a_1,\ldots,a_\ell,a_{\ell+1},\ldots,a_n)=-f_t(a_1,\ldots,a_{\ell+1},a_\ell,\ldots,a_n)$ for all $\ell$. This implies that $f_t$ is an alternating polynomial. Since every alternating polynomial is a multiple of the Vandermonde polynomial $V(a_1,\ldots,a_n)=\prod_{1\leq i<j\leq n}(a_i-a_j)$ and $\deg f_t=\binom{n-1}2+t<\binom n2=\deg V$, we conclude that $f_t= 0$ for $t\leq n-2$.

Therefore $\sum_{i=1}^n a_i^t/c_i=0$ holds for each $0\le t\le n-2$, so every coefficient of the polynomial $\sum_{i=1}^n (x+a_iy)^{n-2}/c_i$ is 0, as desired.
\end{proof}

We then have the following analogue of \cref{prop:conv}.

\begin{proposition} \label{prop:conv-a}
Given a Riemann integrable $F \colon \TT^2 \to [0,1]$,
define $f_N \colon \ZZ/N\ZZ \to [0,1]$ by $f_N(n) = F(n/N,n^{k-2}/N)$. 
If $F$ has constant first marginal $\alpha$, then, as $N \to \infty$, 
\[
\norm{f_N - \alpha}_{U^{k-2}} \to 0
\]
and
\[
\Lambda_\bfa(f_N) \to \wt\Lambda_\bfa(F),
\]
where
\[
\wt\Lambda_\bfa(F) \coloneqq \EE_{(x_1, \dots, x_k, y_1, \dots, y_k) \in V} F(x_1, y_1) \cdots F(x_k, y_k)
\]
where $V$ is the subset of $\TT^{2k}$ defined by all points $(x_1, \dots, x_k, y_1, \dots, y_k)$ where $x_1,\ldots,x_k$ is an $\bfa$-AP and $y_1,\ldots,y_k$ satisfies the $\bfa$-binomial equation.
\end{proposition}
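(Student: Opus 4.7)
The plan is to mirror the proof of \cref{prop:conv}, with \cref{lem:a-binom-eq} replacing the classical binomial identity \eqref{eq:k-binom-equality} used in the $k$-AP case. The core technique is Weyl equidistribution for polynomial sequences, accessed via Fourier approximation. Since $F$ is Riemann integrable and bounded, I would first approximate it by a trigonometric polynomial $F^\epsilon(x, y) = \sum_{|m|, |\ell| \le M} \hat F(m, \ell) e^{2\pi i(m x + \ell y)}$ with small $L^1$ error and comparable $L^\infty$-bound. The constant-first-marginal hypothesis forces $\hat F(m, 0) = \alpha \mathbbm{1}[m = 0]$, so only modes with $\ell \neq 0$ contribute to $F - \alpha$. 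Both $\|\cdot\|_{U^{k-2}}$ and $\Lambda_\bfa$ are controlled in terms of $L^\infty$ and $L^2$ approximation errors (the latter by interpolation, since Gowers norms dominate $L^{2^{k-2}}$), so it suffices to verify both convergence claims for individual Fourier modes.

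For the $U^{k-2}$-uniformity, for a single mode $F(x,y) = e^{2\pi i(mx + \ell y)}$ with $\ell \neq 0$, the function $f_N(n) = e^{2\pi i(mn + \ell n^{k-2})/N}$ is a polynomial phase of degree exactly $k - 2$. The $(k-2)$-fold iterated discrete derivative of $mn + \ell n^{k-2}$ equals the constant $\ell (k-2)!\, h_1 h_2 \cdots h_{k-2}$ (independent of $n$), giving
\[
\|f_N\|_{U^{k-2}}^{2^{k-2}} = \Bigl| \E_{h_1, \dots, h_{k-2} \in \Z/N\Z} e^{2\pi i \ell (k-2)! h_1 \cdots h_{k-2} / N} \Bigr|.
\]
Conditioning on $h_2, \dots, h_{k-2}$ reduces the inner $h_1$-expectation to $\gcd(N,\, \ell (k-2)! h_2 \cdots h_{k-2})/N$; iterating together with the divisor-sum bound $\E_h \gcd(N, h) \le \tau(N) = N^{o(1)}$ shows the whole quantity is $O_\ell(N^{-1 + o(1)}) \to 0$.

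For the $\Lambda_\bfa$-convergence, expanding via Fourier in each slot of $\Lambda_\bfa(f_N, \dots, f_N)$ reduces to showing that for each $k$-tuple of modes $((m_i, \ell_i))_{i=1}^k$,
\[
\E_{n, d \in \Z/N\Z} e^{2\pi i P(n, d) / N} \longrightarrow \int_V e^{2\pi i L(x, y)} \, d\mu_V(x, y) \quad \text{as } N \to \infty,
\]
where $P(n, d) = \sum_i m_i (n + a_i d) + \sum_i \ell_i (n + a_i d)^{k-2}$ and $L(x, y) = \sum_i (m_i x_i + \ell_i y_i)$. The right-hand side equals $1$ if the character $L$ is trivial on $V$ and $0$ otherwise. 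The key observation is that $L$ annihilates $V$ if and only if $P \equiv 0$ as a polynomial in $n, d$: its linear part vanishes iff $\sum m_i = 0$ and $\sum m_i a_i = 0$ (the $\bfa$-AP relations on $x$), and by \cref{lem:a-binom-eq} its degree-$(k-2)$ part vanishes iff $(\ell_i)$ is proportional to $(1/c_i)$ (the $\bfa$-binomial equation on $y$). When $P \equiv 0$ both sides clearly equal $1$.

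The hard part will be the remaining case $P \not\equiv 0$, where one must show $|\E_{n,d \in \Z/N\Z} e^{2\pi i P(n, d)/N}| \to 0$. This is a two-variable Weyl-type exponential sum estimate for an integer polynomial of degree $\le k - 2$ whose coefficients are fixed independently of $N$. I would prove it by iterated Cauchy--Schwarz/Weyl differencing in the variables $n$ and $d$, ordering the differencings so as to isolate a surviving leading monomial whose integer coefficient is nonzero modulo $N$ (which holds once $N$ exceeds the coefficients of $P$), ultimately reducing to a linear exponential sum of size $O(1/N)$. With this estimate in hand, the discrete side vanishes in the limit and matches the $V$-integral, completing the proof.
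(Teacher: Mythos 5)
Your proposal follows the same route the paper takes. The paper's proof of this proposition is stated as "identical to that of Proposition~2.2 other than an application of Lemma~7.2 instead of~(7.1)," and Proposition~2.2 is proved in the appendix exactly as you describe: approximate $F$ by smooth functions, pass to Fourier modes, and use a complete Weyl-sum estimate (Lemma~A.1) to show each exponential sum tends to $0$ unless the underlying polynomial vanishes identically, the latter condition being matched to triviality of the character on $V$ via \cref{lem:a-binom-eq}. The only difference is that you propose to re-derive the Weyl estimate by iterated Cauchy--Schwarz and divisor bounds, whereas the paper cites it as standard; your added detail is sound in outline, though two small slips are worth noting. First, $\E_{h_1\in\Z/N\Z}e(ch_1/N)$ is the indicator $\mathbbm{1}[N\mid c]$, not $\gcd(N,c)/N$ (the latter is only an upper bound), and the subsequent iteration should be phrased accordingly. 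Second, the parenthetical "Gowers norms dominate $L^{2^{k-2}}$" has the inequality backwards; the approximation step actually rests on $\|g\|_{U^s}^{2^s}\le\|g\|_1\|g\|_\infty^{2^s-1}$, which bounds the Gowers norm by $L^1$ error under an $L^\infty$ bound, and that is precisely what the paper uses. Neither issue affects the validity of the overall argument.
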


This proof is identical to that of \cref{prop:conv} other than an application of \cref{lem:a-binom-eq} instead of \cref{eq:k-binom-equality}.

We now want to adapt \cref{defn:k-binomial} and 
\cref{lem:gen-k-ap-construction} to this setting. To do so we will need to understand trivial solutions to the $\bfa$-binomial equation.

\begin{definition}
Let $\bfa = (a_1, \dots, a_k)$ have increasing integer coordinates and let $c(\bfa)=(c_1, \dots, c_k)$ be the corresponding $\bfa$-coefficients. When $k$ is even, a \emph{pairing} of $\bfa$ is a map $f\colon[k]\to[k]$ which partitions $[k]$ into pairs (i.e., $f(f(i))=i$ for all $i$ and $f(i)\neq i$) where $c_i=-c_{f(i)}$ for all $i\in[k]$.
\end{definition}

Now it follows from the definition that if $n_1,\ldots,n_k$ is a trivial solution to the $\bfa$-binomial equation, then at least one of the following holds:
\begin{enumerate}[(i)]
    \item $k$ is even and there exists a pairing $f$ of $\bfa$ such that $n_i=n_{f(i)}$ for all $i\in[k]$; or
    \item there is some $I\subset [k]$ with $|I|\geq 3$ and $\sum_{i\in I}c_i^{-1}=0$ and $\{n_i\}_{i\in I}$ are all equal to each other.
\end{enumerate}

In light of this observation we make the following definition.

\begin{definition}[$\bfa$-binomial pattern]
Let $\bfa = (a_1, \dots, a_k)$ have increasing integer coordinates and let $c(\bfa)=(c_1, \dots, c_k)$ be the corresponding $\bfa$-coefficients. 
For a coloring $\phi \colon G \to [r]$, we say $n+a_1d, n+a_2d, \dots, n+a_kd$ with $d\neq 0$ is an \emph{$\bfa$-binomial pattern} if either of the following holds:
\begin{enumerate}[(a)]
	\item $k$ is even and there is some pairing $f$ of $\bfa$  such that $\phi(n + a_id) = \phi(n + a_{f(i)}d)$ for each $i\in[k]$; or
	\item there is some $I \subset [k]$ with $\abs{I} \ge 3$ and $\sum_{i \in I} c_i^{-1} = 0$ such that $\{n+a_id\}_{i \in I}$, are assigned the same color under $\phi$.
\end{enumerate}
\end{definition}

\begin{remark}
\label{rem:asym-pairing}
If $\bfa$ is symmetric (recall that this means that $k$ is even and $a_1+a_k=a_2+a_{k-1}=a_3+a_{k-2}=\cdots$) then clearly $f(i)=k-i+1$ is a pairing. (We call this pairing the \emph{symmetric pairing}.) However this is not the only possibility. For example, $\bfa=(1,2,10,16,17,20)$ is not symmetric, but $c(\bfa)=(-41040, 30240, -30240, 5040, -5040, 41040)$, so $\bfa$ has a pairing. Furthermore, consider the symmetric $\bfa=(1,2,3,6,7,8)$. Here $c(\bfa)=(-420, 120, -120, 120, -120, 420)$, showing that symmetric $\bfa$ can have pairings other than the symmetric one.
\end{remark}

\begin{lemma}[General construction for $\bfa$-AP] \label{lem:gen-a-ap-construction}
Let $\bfa = (a_1, \dots, a_k)$ have increasing integer coordinates. There is a constant $C_\bfa$ depending on $\bfa$ such that the following holds. For any $r\geq 1$ and $\epsilon>0$, suppose there exist
\begin{itemize}
	\item a Jordan measurable $r$-coloring $\Phi\colon\TT\to[r]$ with
    \[\Pr_{x,y\in\TT}(x+a_1y,\ldots,x+a_ky\text{ forms an $\bfa$-binomial pattern})\leq\epsilon,\] and
	\item an $r$-element subset of $\ZZ/m\ZZ$ avoiding non-trivial solutions to the $\bfa$-binomial equation.
\end{itemize}
Then there is some Jordan measurable $A \subset \TT^2$ such that $1_A$ has constant first marginal $1/(C_\bfa m)$ and $\wt \Lambda_{\bfa}(1_A) \leq\epsilon m^{-k+1}$.
\end{lemma}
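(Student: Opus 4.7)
The plan is to follow the proof of \cref{lem:gen-k-ap-construction} closely, with modifications to accommodate the fact that the coefficients $c_i^{-1}$ in the $\bfa$-binomial equation need not be integers. I will set $D = \prod_{i=1}^k |c_i|$, so each $D/c_i$ is a nonzero integer, and pick $C_\bfa$ to be a constant depending only on $\bfa$ that satisfies $C_\bfa \ge 2D\sum_i |c_i|^{-1}$ together with a second size requirement that appears at the end of the argument. Given the $r$-element set $S = \{s_1, \dots, s_r\} \subseteq \ZZ/m\ZZ$ avoiding nontrivial solutions to the $\bfa$-binomial equation, define intervals
\[
J_j = \left[\frac{s_j}{m},\; \frac{s_j}{m} + \frac{1}{C_\bfa m}\right) \subset \TT, \qquad j = 1, \dots, r,
\]
and set $A = \{(x, y) \in \TT^2 : y \in J_{\Phi(x)}\}$. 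Each vertical slice of $A$ is an interval of length $1/(C_\bfa m)$, so $1_A$ has constant first marginal $1/(C_\bfa m)$.

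The core step is to show that if $(x_1, y_1), \dots, (x_k, y_k) \in A$ with $x_1, \dots, x_k$ an $\bfa$-AP in $\TT$ and $y_1, \dots, y_k$ satisfying $\sum_i y_i/c_i \equiv 0 \pmod 1$, then $x_1, \dots, x_k$ must form an $\bfa$-binomial pattern in $\Phi$. Writing $y_i = s_{\Phi(x_i)}/m + \delta_i$ with $\delta_i \in [0, 1/(C_\bfa m))$ and multiplying the $\bfa$-binomial constraint by $Dm$, the integer $\sum_i s_{\Phi(x_i)}(D/c_i)$ differs from an integer multiple of $Dm$ by at most $(D/C_\bfa)\sum_i |c_i|^{-1} \le 1/2$, which forces equality. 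Thus $\sum_i s_{\Phi(x_i)}(D/c_i) \equiv 0 \pmod m$, that is, $s_{\Phi(x_1)}, \dots, s_{\Phi(x_k)}$ is a solution to the $\bfa$-binomial equation in $\ZZ/m\ZZ$ (in the cleared-denominator form). The hypothesis on $S$ forces this to be trivial, and the classification of trivial solutions recorded just before the definition of $\bfa$-binomial pattern, combined with the distinctness of the $s_j$'s, then yields that $x_1, \dots, x_k$ is an $\bfa$-binomial pattern in $\Phi$.

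To finish, the hypothesis on $\Phi$ says a uniformly random $\bfa$-AP in $\TT$ forms an $\bfa$-binomial pattern with probability at most $\epsilon$. Conditional on such $x_i$'s, the $y_i$'s range over the $(k-1)$-dimensional subtorus of $\TT^k$ cut out by $\sum_i y_i/c_i \equiv 0 \pmod 1$, and the joint conditions $y_i \in J_{\Phi(x_i)}$ have total measure on this subtorus at most $K_\bfa (C_\bfa m)^{-(k-1)}$ for some constant $K_\bfa$ depending only on $\bfa$ (coming from the slope of the linear equation cutting across the product of intervals). Choosing $C_\bfa$ large enough that also $K_\bfa/C_\bfa^{k-1} \le 1$ yields $\wt\Lambda_\bfa(1_A) \le \epsilon m^{-k+1}$, as required. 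The only conceptually new technical issue, compared to \cref{lem:gen-k-ap-construction}, is the treatment of the rational coefficients $c_i^{-1}$: after clearing denominators by $D$ and choosing $C_\bfa$ so the $\delta_i$-induced rounding error is strictly less than one, the desired integer congruence modulo $m$ can be extracted. This is the main (and essentially only) hurdle; the rest of the argument is a direct adaptation of the blueprint of \cref{lem:gen-k-ap-construction}.
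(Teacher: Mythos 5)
Your proposal is correct and matches the paper's intended approach: the paper does not spell out a proof for \cref{lem:gen-a-ap-construction} but simply states that it ``follows from the same proof as \cref{lem:gen-k-ap-construction},'' and your write-up is precisely that adaptation. You correctly isolate the one genuinely new issue, namely that the coefficients $1/c_i$ in the $\bfa$-binomial equation are rational rather than integral, and handle it by clearing denominators (via $D=\prod_i\abss{c_i}$) and choosing $C_\bfa$ large enough that the rounding error from the $\delta_i$'s is below $1/2$, which forces the exact integer identity and hence the mod-$m$ congruence. One small remark: the extra constant $K_\bfa$ in your final measure bound is unnecessary. After fixing the $x_i$'s, the constraint subtorus $\{y\in\TT^k:\sum_i (L_0/c_i)y_i\equiv 0\}$ (with $L_0/c_i$ the integer coefficients of gcd $1$) projects onto $\TT^{k-1}$ via the first $k-1$ coordinates with constant fiber size, so the pushforward of its Haar measure is exactly uniform on $\TT^{k-1}$; dropping the constraint on $y_k$ then gives $\Pr[y_i\in J_{\Phi(x_i)} \text{ for } i<k]\le (C_\bfa m)^{-(k-1)}$ with no extra constant, exactly parallel to the paper's bound of $(2^km)^{-k+1}$ for $k$-APs. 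This does not affect correctness since you compensate by enlarging $C_\bfa$, but it streamlines the argument.
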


This lemma follows from the same proof as \cref{lem:gen-k-ap-construction}.

We will use \cref{lem:gen-a-ap-construction} to prove \cref{thm:general_reduction,thm:general_pattern}. To do this we need a coloring that avoids $\bfa$-binomial patterns and a set that avoids non-trivial solutions to the $\bfa$-binomial equation. The latter is produced by \cref{lem:avoid-nontriv}. The former is harder to produce for $\bfa$-APs than for $k$-APs. This is due to the presence of non-symmetric pairings.
In the next lemmas we show how to produce efficient colorings that avoid all non-symmetric pairings first in $\Z/N\Z$ and then in $\TT$.

To provide the coloring, we first define a simple coloring of $\Z/m\Z$ that helps to handle wrap-around issues. 

\begin{definition}
For an abelian group $G$ and $p\in G$,  a \emph{$k$-AP with jumps of size $p$} is a sequence $n_1, \ldots, n_k\in G$ such that there exists $d\in G$ (the common difference) such that $n_{i+1}-n_i \in \{d,d+p\}$ for all $1\le i<k$. 
\end{definition}

The following lemma is useful for showing that certain APs with jumps are genuine APs.

\begin{lemma} \label{lem:error-correct}
Let $a,k,p$ be positive integers, with $k\leq a$ and $p$ relatively prime to $a!$. Let $G=\Z$ or $G=\Z/m\Z$ for some $m$ a multiple of $a!$. Suppose that $n_1, \ldots, n_k\in G$ is a $k$-AP with jumps of size $p$. If $n_1\equiv n_k \pmod{a!}$, then $n_1, \ldots, n_k$ is a $k$-AP. Furthermore if there exist $1<k'<k''<k$, such that $n_1\equiv n_{k''} \pmod{a!}$ and $n_{k'}\equiv n_k\pmod{a!}$, then $n_1, \ldots, n_k$ is a $k$-AP. 
\end{lemma}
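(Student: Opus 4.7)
The plan is to reduce both assertions to a counting statement on the consecutive differences. Write $\delta_i \coloneqq n_{i+1}-n_i$ for $1\le i<k$, so each $\delta_i\in\{d,d+p\}$, and let $s$ denote the number of indices with $\delta_i=d+p$. Telescoping gives
\[
n_k-n_1 = (k-1)d + sp,
\]
and the sequence $n_1,\dots,n_k$ is a genuine $k$-AP if and only if $s\in\{0,k-1\}$.

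For the first assertion, the hypothesis $n_1\equiv n_k\pmod{a!}$ becomes $(k-1)d+sp\equiv 0\pmod{a!}$, where we lift $d$ to an integer representative when $G=\Z/m\Z$ (using $a!\mid m$). The key trick is to reduce modulo $k-1$: since $k-1\le a-1$, the integer $k-1$ divides $a!$, so $sp\equiv 0\pmod{k-1}$. The coprimality $\gcd(p,a!)=1$ together with $(k-1)\mid a!$ forces $\gcd(p,k-1)=1$, hence $s\equiv 0\pmod{k-1}$. Combined with $0\le s\le k-1$ this yields $s\in\{0,k-1\}$, giving the first claim.

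For the ``furthermore'' part, the plan is to apply the first assertion to the two subsequences $n_1,\dots,n_{k''}$ and $n_{k'},\dots,n_k$. Each has length at most $k-1<a$ (using $k'\ge 2$ and $k''\le k-1$) and inherits the jumps-of-size-$p$ structure from the original sequence, so each is a genuine AP with some common differences $d_1$ and $d_2$ respectively. Because $k'<k''$, the two subsequences share a consecutive pair of indices, and evaluating the common difference there forces $d_1 = n_{k'+1}-n_{k'} = d_2$, so the full sequence is a $k$-AP with that common value. The only nonobvious step is the modular reduction in the first part; this is where the hypothesis $k\le a$ is used essentially.
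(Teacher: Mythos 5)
Your proof is correct and follows essentially the same route as the paper's: you telescope to get $n_k - n_1 = (k-1)d + sp$, use $(k-1)\mid a!$ (from $k\le a$) to deduce $(k-1)\mid sp$, invoke $\gcd(p,k-1)=1$ to force $s\in\{0,k-1\}$, and then deduce the "furthermore" part by applying the first part to the two overlapping subsequences and matching the shared consecutive difference. No gaps.
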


\begin{proof}
Let $d\in G$ be the common difference. Let $0\leq c<k$ be the number of $1\leq i<k$ such that $n_{i+1}-n_i= d+p$. Then $n_k-n_1=d(k-1)+cp$.

For the first statement, we have that $d(k-1)+cp=n_k-n_1\equiv 0 \pmod{a!}$. Since $k-1$ is a divisor of $a!$ we have $(k-1)\mid (n_k-n_1)$ implying that $(k-1)\mid cp$. As $p$ is relatively prime to $k-1$, we conclude that either $c=k-1$ or $c=0$, implying that $n_1, \ldots, n_k$ is an arithmetic progression with common difference $d+p$ or $d$. This proves the first result.

For the second statement, we have by the first statement that $n_1,\ldots, n_{k''}$ is an AP and $n_{k'},\ldots, n_k$ is an AP. Since $k'<k''$, this implies that $n_1,\ldots, n_k$ is a $k$-AP, as desired.
\end{proof}

We will construct the desired coloring using an intricate Behrend-style coloring that avoids the following types of patterns.

\begin{definition}
Let $\bfa=(a_1,a_2,a_3,a_4)$ have increasing integer coordinates. In a coloring $\phi$, an \emph{$\bfa$-ABAB pattern} is an $\bfa$-AP $(n+a_1d,n+a_2d,n+a_3d,n+a_4d)$ such that $\phi(n+a_1 d)=\phi(n+a_3 d)$ and $\phi(n+a_2 d)=\phi(n+a_4d)$. Similarly, an \emph{asymmetric $\bfa$-ABBA pattern} is an $\bfa$-AP such that $\phi(n+a_1 d)=\phi(n+a_4 d)$ and $\phi(n+a_2 d)=\phi(n+a_3d)$ and such that $\bfa$ satisfies $a_1+a_4\neq a_2+a_3$.
\end{definition}

\begin{lemma}
\label{lem:mod-behrend-coloring}
For all $a$, there is a constant $C$ such that the following holds. For every $r\ge 2$, there exists some $N=\Theta(r^{C\log r})$ such that there is an $r$-coloring of $\Z/N\Z$ that avoids $\bfa$-ABAB patterns and asymmetric $\bfa$-ABBA patterns for every $\bfa=(a_1,a_2,a_3,a_4)$ with $0<a_1<a_2<a_3<a_4\leq a$.
\end{lemma}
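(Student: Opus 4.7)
The plan is to build the coloring by combining a Behrend-style sum-of-squares coloring on base-$p$ digit vectors with a complementary mod-$a!$ coloring, then using \cref{lem:error-correct} to bridge the two.

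\emph{Setup.} I fix a prime $p$ coprime to $a!$ and set $N = a! \cdot p^m$ for a parameter $m$, using the Chinese remainder isomorphism $\Z/N\Z \cong \Z/a!\Z \times \Z/p^m\Z$ to write $n=(n',n'')$. Let $\psi \colon \Z/p^m\Z \to [0,p)^m$ be the base-$p$ digit map and $Q(v) = \sum_{j} v_j^2$ the standard integer positive-definite quadratic form on $\Z^m$, with values in $[0,mp^2]$. Define $\phi(n) = (n',\, Q(\psi(n'')))$, which uses at most $a!\cdot(mp^2+1)$ colors. Choosing $p = \Theta(r^{1/2})$ and $m = \Theta(\log r)$ gives $O(r)$ colors and $N = \Theta(r^{(1/2)\log r})$, matching the required order.

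\emph{Avoidance argument.} Suppose $(n+a_id)_{i=1}^{4}$ is a non-trivial $\bfa$-AP on which $\phi$ exhibits an ABAB pattern (asymmetric ABBA is parallel). The $\phi_1$-matching conditions $n+a_1 d \equiv n+a_3 d$ and $n+a_2 d \equiv n+a_4 d \pmod{a!}$ exactly match the ``furthermore'' clause of \cref{lem:error-correct} with $k = a_4-a_1+1 \le a$, $k' = a_2-a_1+1$, $k'' = a_3-a_1+1$, applied in $G = \Z$ to the sequence $(n+kd \bmod p^m)_{k=a_1}^{a_4}$, which is an AP-with-jumps of size $p^m$ (coprime to $a!$) because of wraparound. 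The lemma's conclusion forces this sequence to be a genuine AP, and an iterated coordinate-by-coordinate application yields the ``no-carry'' identity $\psi(n''+a_id'') = \psi(n'') + a_i\psi(d'')$ in $\Z^m$. Then the Behrend identity $Q(u+tw) = Q(u)+2t\langle u,w\rangle + t^2 Q(w)$ together with the two $\phi_2$-matching conditions produces the linear system
\[
2\langle \psi(n''), \psi(d'')\rangle + (a_1+a_3) Q(\psi(d'')) = 0 = 2\langle \psi(n''), \psi(d'')\rangle + (a_2+a_4) Q(\psi(d'')).
\]
Subtracting and using $a_1 + a_3 < a_2 + a_4$, which is automatic from the strict ordering of the $a_i$ (or $a_1+a_4 \ne a_2+a_3$ in the asymmetric ABBA case by hypothesis), forces $Q(\psi(d'')) = 0$; positive-definiteness then gives $\psi(d'') = 0$, so $d'' = 0$. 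Combined with the $\phi_1$-divisibility data, this yields $d = 0$ in $\Z/N\Z$.

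\emph{Main obstacle.} The most delicate step is the per-coordinate carry analysis. After \cref{lem:error-correct} eliminates the $p^m$-wraparound at the level of integers, one must still rule out the base-$p$ carries that can occur internally within $[0, p^m)$; these manifest coordinate by coordinate as jumps of size $\pm p$ in the integer-valued $j$-th digit sequence. Showing that the ``furthermore'' structure supplied by $\phi_1$, together with the coprimality $\gcd(p, a!) = 1$, descends to the required endpoint congruences on each coordinate, and that the cascade of carries between adjacent coordinates does not introduce jump sizes incompatible with the $\{d, d+p\}$ form required by \cref{lem:error-correct}, is where most of the technical effort will be concentrated. The choices $p$ much larger than $a!$ and $N = a! \cdot p^m$ are made precisely so that this cascade can be disentangled.
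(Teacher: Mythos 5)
Your overall plan—marry a sum-of-squares Behrend coloring (which forbids quadratic-degenerate ABAB/ABBA patterns in $\Z^m$) to a ``mod $a!$'' correction coloring and use \cref{lem:error-correct} to transfer from $\Z/N\Z$ to $\Z^m$—is exactly the paper's. The axis-of-symmetry/linear-system argument at the end is also the same. The gap is in your choice of the correction coloring $\phi_1$.

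You set $N=a!\cdot p^m$ and let $\phi_1(n)=n'$ be the $\Z/a!\Z$ factor under CRT. The $\phi_1$-matching conditions then say $(a_3-a_1)d'=(a_4-a_2)d'=0$ in $\Z/a!\Z$, i.e.\ they constrain only $d'=d\bmod a!$. But the sequence to which you apply \cref{lem:error-correct} is $m_j=(n+jd)\bmod p^m\in[0,p^m)$, and what the ``furthermore'' clause actually requires is $m_{a_1}\equiv m_{a_3}\pmod{a!}$ and $m_{a_2}\equiv m_{a_4}\pmod{a!}$. Since $\gcd(a!,p^m)=1$, the CRT components are independent: knowing $d'$ gives no information whatsoever about $d''=d\bmod p^m$, about the wraparound carries $c_j$ in $m_j=n''+jd''-c_jp^m$, or about $m_j\bmod a!$. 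So even the very first application of \cref{lem:error-correct} is unjustified. (Concretely, fix any $d'$ satisfying your congruences; one can still take $d''=p^m-1$, producing heavy wraparound that $\phi_1$ cannot see.) The further per-digit carry issue you flag in the ``main obstacle'' paragraph cannot be repaired from $\phi_1$ either, since $\phi_1$ carries only $O(\log a!)$ bits of information in total, not per base-$p$ digit.

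The paper resolves both problems at once by a different decomposition: it sets $N=M^m$ with $M$ coprime to $a!$, identifies $\Z/N\Z$ with $\{0,\dots,M-1\}^m$ via base-$M$ expansion, and defines the correction coloring $\chi(n_1,\dots,n_m)=(n_1\bmod a!,\dots,n_m\bmod a!)$. This records every digit mod $a!$, using $(a!)^m$ colors, and those are exactly the endpoint congruences needed to run \cref{lem:error-correct} one digit at a time (outermost wraparound for the last digit, then divide by $M$ and induct). The color cost $(a!)^m=r^{m\log a!/\log r}$ is affordable because $m=\Theta(\log r)$. If you want to salvage your approach, you would have to replace $\phi_1(n)=n'$ with something that encodes per-digit information—at which point you have essentially reconstructed the paper's $\chi$, and the $a!\cdot p^m$ CRT split becomes superfluous.
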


\begin{proof}
We will pick some appropriate integers $M,m$ where $M$ is relatively prime to $a!$. First consider the coloring $\Psi\colon\{0,1,\ldots,M-1\}^m\to [mM^2]$ defined by $\Psi(n_1,\ldots,n_m)=n_1^2+\cdots+n_m^2+1$. First we claim that $\Psi$ avoids $\bfa$-ABAB patterns and asymmetric $\bfa$-ABBA patterns, for all $0<a_1<a_2<a_3<a_4\leq a$. To see this, note that for a given $n, d\in \Z^m$, on the line $n+id$, the coloring function $\Psi(n+id)=f_{n,d}(i)$ is a non-trivial quadratic function of $i$. If $(n+a_1d,n+a_2d,n+a_3d,n+a_4d)$ is an $\bfa$-ABAB pattern then $f_{n,d}(a_1)=f_{n,d}(a_3)$, so the axis of symmetry of the quadratic $f_{n,d}$ is at $(a_1+a_3)/2$. However, as $f_{n,d}(a_2)=f_{n,d}(a_4)$, the axis of symmetric would have to be at $(a_2+a_4)/2>(a_1+a_3)/2$, contradiction. Similarly, an asymmetric $\bfa$-ABBA pattern is not possible since the axis of symmetry would have to be at $(a_1+a_4)/2\neq(a_2+a_3)/2$.

Write $N=M^m$ and identify $\Z/N\Z$ with $\{0,1,\ldots,M-1\}^m$ by considering the base $M$ expansion of an element $0\leq n<N$. Thus we can view $\Psi$ as a coloring $\Psi\colon\Z/N\Z\to[mM^2]$.

We will define another coloring $\chi \colon \{0,1,\ldots,M-1\}^m\to\{0,1,\ldots,a!-1\}^m$ that reduces the coordinates $\bmod{\,a!}$. Namely define $\chi(n_1,\ldots,n_m)=(n_1\mod {a!},\ldots,n_m\mod {a!})$. Finally define $\psi\colon\Z/N\Z\to[mM^2]\times\{0,1,\ldots,a!-1\}^m$ to be the product of $\Psi$ and $\chi$. We show that $\psi$ has the desired properties.

To prove this we claim the following. Let $(n+a_1d,n+a_2d,n+a_3d,n+a_4d)$ be an $\bfa$-AP in $\Z/N\Z$. If it is an $\bfa$-ABAB pattern or an $\bfa$-ABBA pattern with respect to $\chi$, then it is also an $\bfa$-AP when viewed as a subset of $\{0,1,\ldots,M-1\}^m$.

This claim follows by verification digit by digit. Suppose that $(n+a_1d,n+a_2d,n+a_3d,n+a_4d)$ is an $\bfa$-ABAB pattern or an $\bfa$-ABBA pattern with respect to $\chi$. Let $0\leq m_i<M$ be the least significant digit in the base $M$ expansion of $n+id$. We see that $m_{a_1}, m_{a_1+1}, \ldots, m_{a_4}$ is an AP with jumps of size $M$ in $\Z$.

Suppose that $(n+a_1d,n+a_2d,n+a_3d,n+a_4d)$ is an $\bfa$-ABBA pattern. By the definition of $\chi$, we know that $m_{a_1}\equiv m_{a_4}\pmod{a!}$. By \cref{lem:error-correct}, $m_{a_1}, m_{a_1+1}, \ldots, m_{a_4}$ is a genuine AP. Hence $m_{a_1},m_{a_2},m_{a_3},m_{a_4}$ form an $\bfa$-AP viewed as elements of $\{0,1,\ldots,M-1\}$. Then we can ignore the last digit and induct, since $((n+a_id)-m_{a_i})/M$ for $1\le i\le 4$ is an $\bfa$-ABBA pattern in $\{0,1,\ldots,N/M-1\}$ with respect to $\chi$. 

Similarly, suppose $(n+a_1d,n+a_2d,n+a_3d,n+a_4d)$ is an $\bfa$-ABAB pattern. By definition of $\chi$, $m_{a_1}\equiv m_{a_3}\pmod{a!}$ and $m_{a_2}\equiv m_{a_4}\pmod{a!}$ so by \cref{lem:error-correct}, $m_{a_1}, m_{a_1+1}, \ldots, m_{a_4}$ is an AP. In conclusion, $m_{a_1}, m_{a_2},m_{a_3},m_{a_4}$ still form an $\bfa$-AP viewed as elements of $\{0,1,\ldots,M-1\}$ so we can induct as in the previous case.

This completes the proof of the claim.

Thus we have showed that for any $\bfa=(a_1,a_2,a_3,a_4)$ with $a_1<a_2<a_3<a_4\leq a$ any $\bfa$-ABAB pattern or asymmetric $\bfa$-ABBA pattern in the coloring $\psi$ of $\Z/N\Z$ is actually such a pattern when viewed as a subset of $\{0,\ldots,M-1\}^m$. However we chose $\Psi$ in a way that no such pattern exists.

Picking $M=\Theta(r^c)$ relatively prime to $a!$ and $m=c'\log r$ for appropriate $c,c'$ gives an $r$-coloring of $\Z/N\Z$ with $N\geq r^{\Omega(\log r)}$, as desired.
\end{proof}

The reason we study ABAB patterns and asymmetric ABBA patterns is the following lemma which shows that they are ubiquitous in pairings of longer $\bfa$-APs.

\begin{lemma}
\label{claim:pairing}
In any pairing $f$ of $\bfa$ one of the following holds:
\begin{enumerate}
    \item there exist $1\leq i_1<i_2<i_3<i_4\leq k$ with $f(i_1)=i_3$ and $f(i_2)=i_4$; or 
    \item there exist $1\leq i_1<i_2<i_3<i_4\leq k$ with $f(i_1)=i_4$ and $f(i_2)=i_3$ and $a_{i_1}+a_{i_4}\ne a_{i_2}+a_{i_3}$; or
    \item $\bfa$ is symmetric and $f$ is the symmetric pairing $f(i)=k+1-i$.
\end{enumerate}
\end{lemma}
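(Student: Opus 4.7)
The plan is to suppose neither conclusion (1) nor (2) holds and deduce (3). First I will show that $f(1)=k$, and then iterating the same idea on the ``inside'' gives the fully symmetric pairing.

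To establish $f(1)=k$, set $j:=f(1)$. Comparing signs of $c_1$ and $c_j$ (each $c_i$ has sign $(-1)^{k-i}$), the equation $c_1=-c_j$ forces $j$ to be even. Since $f$ is non-crossing, every pair of $f$ meeting $\{2,\ldots,j-1\}$ lies entirely inside $\{2,\ldots,j-1\}$. For any such inner pair $\{p,q\}$, the quadruple $1<p<q<j$ with $f(1)=j$ and $f(p)=q$ is a case~(2) configuration, so the failure of (2) gives $a_p+a_q=a_1+a_j=:s$. The key observation is that two disjoint inner pairs $\{p_1,q_1\}$ and $\{p_2,q_2\}$ with $q_1<p_2$ cannot both sum to $s$, since $a_{p_1}+a_{q_1}<a_{p_2}+a_{q_2}$. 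Combined with non-crossing, this forces the inner pairs to form a linearly nested chain $\{2,j-1\},\{3,j-2\},\ldots$, and so $(a_1,\ldots,a_j)$ is symmetric: $a_i+a_{j+1-i}=s$ for all $i$.

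I will then exploit the defining equation $c_1=-c_j$ of the pair $\{1,j\}$, which (after dividing out the factor $\pm(a_1-a_j)$) takes the form
\[
\prod_{i\neq 1,j}(a_1-a_i)=\prod_{i\neq 1,j}(a_j-a_i).
\]
Split this into the ``inside'' product over $i\in\{2,\ldots,j-1\}$ and the ``outside'' product over $i\in\{j+1,\ldots,k\}$. The symmetry of $(a_1,\ldots,a_j)$ makes the inside factors cancel: pairing $i$ with $j+1-i$ and using $a_1-a_i=a_{j+1-i}-a_j$ (with $j$ even so the overall sign works out), the two inside products coincide. The remaining equation is
\[
\prod_{i=j+1}^k(a_i-a_1)=\prod_{i=j+1}^k(a_i-a_j),
\]
but each factor on the left strictly exceeds the corresponding factor on the right (since $a_1<a_j<a_i$), and the product is non-empty when $j<k$. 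This contradiction forces $j=k$.

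Finally, with $f(1)=k$ I will apply the disjoint-pair-sum argument a second time, now to all pairs contained in $\{2,\ldots,k-1\}$: they must all share the sum $a_1+a_k$, and hence form a linearly nested chain. This yields $f(i)=k+1-i$ for every $i$ and $a_i+a_{k+1-i}=a_1+a_k$ for every $i$, which is conclusion (3). The main obstacle I anticipate is the cancellation of inside factors in $c_1=-c_j$; both the parity check making $j$ even and the careful index pairing $i\leftrightarrow j+1-i$ are essential, but once that cancellation is in place the outside-factor inequality closes the argument.
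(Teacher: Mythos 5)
Your proof is correct and follows essentially the same strategy as the paper's: assume (1) and (2) fail, deduce that every inner pair sums to $a_1+a_{f(1)}$, and then compare $c_1$ with $c_{f(1)}$ to conclude $f(1)=k$ by showing the outer factors would give a strict inequality. The main difference is one of organization: the paper never needs to establish the nested-chain structure of the inner pairing before comparing $|c_1|$ and $|c_{f(1)}|$. It groups the inner factors directly by the pairs $\{i,f(i)\}$ and uses the identity $|a_1-a_i|\,|a_1-a_{f(i)}| = |a_{f(1)}-a_i|\,|a_{f(1)}-a_{f(i)}|$, which follows immediately from $a_i+a_{f(i)}=a_1+a_{f(1)}$; the nested chain and the symmetry of $\bfa$ are only deduced at the very end. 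Your detour through the nested-chain lemma (and the explicit parity check that $f(1)$ is even) is correct but adds machinery the paper sidesteps by working with absolute values and the natural pairing $\{i,f(i)\}$. Your final paragraph, re-applying the disjoint-pair-sum argument ``a second time,'' is redundant: once $f(1)=k$ is known, you have already shown the pairing on $\{2,\ldots,k-1\}$ is the nested chain and all sums equal $a_1+a_k$.
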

\begin{proof}
If there exists some $2\le i < f(1)$ with $f(i) > f(1)$, then the tuple $(1, i, f(1), f(i))$ shows we are in case (1).
    
Now assume that no such $i$ exists. If for any $2\le i < f(1)$, we have $a_i+a_{f(i)}\ne a_1+a_{f(1)}$, then $(1, \min(i, f(i)), \max(i, f(i)), f(1))$ shows we are in case (2). 

Next assume that for all $2\le i < f(1)$, $a_i+a_{f(i)}= a_1+a_{f(1)}$. From this we see that $|a_1-a_i| | a_1 - a_{f(i)}| = |a_{f(1)} - a_i| | a_{f(1)} - a_{f(i)}|$ for any $2\le i < f(1)$.  For $i > f(1)$, we have $|a_1-a_i| > |a_{f(1)} - a_i|$. Thus
\begin{align*}
|c_1|&=\prod_{\{i,f(i)\}:i<f(1)}|a_1-a_i||a_1-a_{f(i)}|\prod_{i>f(1)}|a_1-a_i|\\
&\geq\prod_{\{i,f(i)\}:i<f(1)}|a_{f(1)}-a_i||a_{f(1)}-a_{f(i)}|\prod_{i>f(1)}|a_{f(1)}-a_i|=|c_{f(1)}|.
\end{align*}
Note that the inequality is strict unless the product over $i>f(1)$ is empty. Since we know $|c_1|=|c_{f(1)}|$ this implies that the product is indeed empty, so $f(1)=k$.

Thus if we are not in case (1) or (2), we have $f(1)=k$ and $a_1+a_k=a_i+a_{f(i)}$ for all $i\in[k]$. Thus $\bfa$ is symmetric and we see that $a_{f(1)}>a_{f(2)}>a_{f(3)}>\cdots$ implying that $f(i)=k+i-1$ for all $i$, so we are in case (3).
\end{proof}

\begin{lemma}
\label{lem:gen-a-ap-coloring}
Let $\bfa=(a_1,\ldots,a_k)$ have increasing integer coordinates. There is a constant $C_\bfa$ so that the following holds. If $k$ is odd or $k$ is even and $\bfa$ is asymmetric, then for every $r$ sufficiently large there exist $N=\Theta(r^{C_{\bfa}\log r})$ and an $r$-coloring $\Phi$ of $\TT$ such that 
\begin{equation}
\label{eq:a-bin-density}
\Pr_{x,y\in\TT}(x+a_1y,\ldots,x+a_ky\text{ forms an $\bfa$-binomial pattern in }\Phi)\lesssim_{\bfa}\frac1N.
\end{equation}
Furthermore, if $k$ is even and $\bfa$ is symmetric, if there exists a $t$-coloring of $\Z/N\Z$ that avoids symmetrically colored $\bfa$-APs, then there exists an $rt$-coloring $\Phi$ of $\TT$ that also satisfies \cref{eq:a-bin-density}.
\end{lemma}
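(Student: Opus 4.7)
The plan is to mirror the proof of \cref{lem:disc-to-cont-k-ap}, substituting \cref{lem:mod-behrend-coloring} for the direct handling of pairings. I would first construct a discrete coloring $\phi$ of $\Z/N\Z$ for some $N = \Theta(r^{C_\bfa \log r})$ satisfying: (i) $\phi$ avoids monochromatic $(a_k + 1)$-patterns, obtained from \cref{lem:avoid-pattern}; (ii) when $k$ is even, $\phi$ avoids $\bfa'$-ABAB and asymmetric $\bfa'$-ABBA patterns for every length-$4$ subsequence $\bfa' = (a_{i_1}, a_{i_2}, a_{i_3}, a_{i_4})$ of $\bfa$, obtained from \cref{lem:mod-behrend-coloring} with parameter $a \leftarrow a_k$; (iii) in the symmetric case, $\phi$ additionally avoids symmetrically colored $\bfa$-APs, obtained from the hypothesized $t$-coloring. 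Taking the product of these colorings and balancing parameters, using that the $N$-bound from \cref{lem:mod-behrend-coloring} dominates that from \cref{lem:avoid-pattern}, gives a coloring with $r$ (or $rt$) colors.

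Next I would lift $\phi$ to a coloring $\Phi\colon \TT \to [\text{colors}]$ by partitioning $\TT$ into $\Theta_\bfa(N)$ equal intervals $I_{\alpha,\beta,\gamma}$ indexed by $\alpha, \gamma \in [M]$ (with $M = O_\bfa(1)$ chosen large enough to resolve the gaps $a_{i+1} - a_i$) and $\beta \in [N]$, using $M^2$ interlaced copies of $\phi$ with disjoint palettes, exactly as in \cref{lem:disc-to-cont-k-ap}. The resulting $\Phi$ uses $O_\bfa(1) \cdot r$ colors, which is absorbed into $C_\bfa$.

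Finally I would bound the $\bfa$-binomial pattern density of $\Phi$. Suppose $x_1, \ldots, x_k$ forms such a pattern with $x_i \in I_{\alpha_i, \beta_i, \gamma_i}$; following \cref{lem:disc-to-cont-k-ap}, the goal is to force two distinct indices $i \ne i'$ with $(\alpha_i, \beta_i, \gamma_i) = (\alpha_{i'}, \beta_{i'}, \gamma_{i'})$, which has probability $\lesssim_\bfa 1/N$ over the random choice of $(x_1,d)$. For case (b) of an $\bfa$-binomial pattern, three equally colored indices yield the collinearity $(a_{i_3} - a_{i_2}) \beta_{i_1} + (a_{i_2} - a_{i_1}) \beta_{i_3} = (a_{i_3} - a_{i_1}) \beta_{i_2}$, hence a monochromatic $(a_k + 1)$-pattern in $\phi$, contradicting (i). For case (a) (applicable only when $k$ is even), a pairing $f$ of $\bfa$ falls by \cref{claim:pairing} into sub-case (1) (yielding an $\bfa'$-ABAB pattern in $\phi$, forbidden by (ii)), sub-case (2) (an asymmetric $\bfa'$-ABBA pattern in $\phi$, also forbidden by (ii)), or sub-case (3) (only if $\bfa$ is symmetric, yielding a symmetrically colored $\bfa$-AP under the extra coloring, forbidden by (iii)).

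The main obstacle is the interlacing bookkeeping: showing that a $\Phi$-color coincidence on an $\bfa$-AP in $\TT$ faithfully descends to a $\phi$-color coincidence on a sub-$\bfa'$-AP in $\Z/N\Z$ of the appropriate type. In \cref{lem:disc-to-cont-k-ap} this was controlled because the common difference between consecutive $k$-AP terms in $\TT$ is small on the scale of the interval length; for general $\bfa$, the gaps $x_{i+1} - x_i = (a_{i+1} - a_i) d$ scale with the coordinates of $\bfa$, so the interlacing resolution $M$ and the digit analysis on $(\alpha_i, \gamma_i)$ must be chosen carefully to correctly extract the pairing indices $(i_1, i_2, i_3, i_4)$ of \cref{claim:pairing} as an actual sub-$\bfa'$-AP in $\Z/N\Z$.
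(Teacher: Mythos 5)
Your overall strategy matches the paper's: build a discrete coloring $\phi$ of $\Z/N\Z$ by combining \cref{lem:avoid-pattern} (monochromatic patterns), \cref{lem:mod-behrend-coloring} (ABAB/asymmetric ABBA), and the hypothesized coloring in the symmetric case; lift $\phi$ to $\Phi$ on $\TT$ via an interlaced discretization; then use \cref{claim:pairing} to show any $\bfa$-binomial pattern in $\Phi$ forces a color collision in a $1/N$-scale interval. You correctly identify all of those ingredients.

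However, you explicitly flag the ``interlacing bookkeeping'' as the main obstacle and do not resolve it, and this is where the real content of the proof lives. The paper's resolution is \cref{lem:error-correct}: it lifts $\phi$ by subdividing each $1/N$-interval into $m = (a_k - a_1 + 1)!$ sub-intervals and recording the position mod $m$, so that for any $\bfa$-binomial pattern in $\Phi$ the fine indices $n_i = \lfloor mN(x+iy) \rfloor$ form an AP with jumps of size $1$, and the color-matching conditions give the congruences mod $m$ that \cref{lem:error-correct} needs to conclude the relevant $n_i$ actually form a genuine AP. This is what lets one descend a $\Phi$-coincidence to a true $\bfa'$-AP (ABAB, asymmetric ABBA, or symmetric) for $\phi$. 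Your proposed imitation of the two-digit $(\alpha,\gamma)$ analysis from \cref{lem:disc-to-cont-k-ap} does not obviously extend here: that analysis crucially exploits the nested structure of the symmetric pairing $(i, k+1-i)$ via the middle pair to pin down the common difference, whereas for a general pairing of $\bfa$ the paired indices can interleave (ABAB) or be non-symmetric (asymmetric ABBA), and the gaps $(a_{i+1}-a_i)d$ are uneven. Your case (b) claim that three equally-colored indices ``yield the collinearity $\ldots$ hence a monochromatic $(a_k+1)$-pattern in $\phi$'' likewise presupposes the exact discretization control that \cref{lem:error-correct} supplies. Without invoking \cref{lem:error-correct} (or an equivalent jump-cancellation argument) and choosing the interlacing modulus to be the factorial $(a_k - a_1 + 1)!$ rather than an unspecified $O_\bfa(1)$, the central step of the proof is missing.
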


\begin{proof}
Without loss of generality, let us assume that $a_1>0$. (This is allowed since all the definitions are invariant under the translation $(a_1,\ldots,a_k)\leadsto(1,a_2-a_1+1,\ldots,a_k-a_1+1)$.) Let $N=\Theta(r^{C_{\bfa}\log r})$ be an appropriately chosen parameter. We first construct a coloring $\phi$ of $\Z/N\Z$ that has a stronger property than avoiding $\bfa$-binomial patterns.

By \cref{lem:avoid-pattern}, there exists a coloring $\psi\colon\Z/N\Z\to[\floor{r^{1/2}}]$ that avoids monochromatic $a_k$-patterns. When $k$ is odd, let $\phi=\psi$, which already avoids $\bfa$-binomial patterns.

When $k$ is even by \cref{lem:mod-behrend-coloring} there is a coloring $\chi\colon\Z/N\Z\to[\floor{r^{1/2}}]$ that avoids $(a_{i_1},a_{i_2},a_{i_3},a_{i_4})$-ABAB patterns for all $1\leq i_1<i_2<i_3<i_4\leq k$ and also avoids $(a_{i_1},a_{i_2},a_{i_3},a_{i_4})$-ABBA patterns for all $1\leq i_1<i_2<i_3<i_4\leq k$ with $a_{i_1}+a_{i_4}\neq a_{i_2}+a_{i_3}$. When $k$ is even and $\bfa$ is asymmetric, let $\phi$ be the product coloring of $\psi$ and $\chi$.

Finally if $k$ is even and $\bfa$ is symmetric, let $\omega\colon\Z/N\Z\to[t]$ be the hypothesized coloring that avoids symmetrically colored $\bfa$-APs. Then let $\phi$ be the product coloring of $\psi$, $\chi$, and $\omega$.

In each case we have a $u$-coloring $\phi\colon \Z/N\Z\to[u]$. By \cref{claim:pairing}, we see that $\phi$ avoids $\bfa$-binomial patterns. In particular, if $k$ is even and $\bfa$ is asymmetric we cannot be in case (3), so we see that $\phi$ avoids $\bfa$-binomial patterns. Similarly, if $k$ is even and $\bfa$ is symmetric, by definition $\phi$ avoids symmetrically colored $\bfa$-APs and thus avoids all $\bfa$-binomial patterns.

Now set $m=(a_k-a_1+1)!$ and define $\Phi\colon\TT\to[mu]$ by interlacing $m$ copies of $\phi$, each using disjoint sets of colors. More precisely, \[\Phi(x)=\phi(\floor{Nx})+u(\lfloor mNx\rfloor\pmod m).\]

To analyze $\Phi$, suppose that $x+a_1y,\ldots,x+a_ky\in\TT$ is an $\bfa$-binomial pattern in $\Phi$. Define $n_i\in \Z/mN\Z$ so that $n_i\equiv \floor{mN(x+iy)}$ and define $m_i\in[m]$ by $m_i\equiv \floor{mN(x+iy)}\pmod m$. Then by definition we have $\Phi(x+a_iy)=\Phi(x+a_jy)$ if and only if $m_i=m_j$ and $\phi(\floor{n_i/m})=\phi(\floor{n_j/m})$. We wish to show that for any such $\bfa$-binomial pattern, there must be two distinct indices $i\neq i'\in[k]$ such that $n_i=n_{i'}$.

We know that $n_{i+1}-n_i\in\{\floor{mNy},\ceil{mNy}\}$ for each $i$, so the $n_i$ form an AP with jumps of size 1. Suppose we have the second type of $\bfa$-binomial pattern; that is, there is some $\ell\geq 3$ and $I=\{i_1<i_2<\cdots<i_\ell\}\subset[k]$ such that $\{x+a_iy\}_{i\in I}$ are assigned the same color under $\Phi$. This implies that $m_{a_{i_1}}=m_{a_{i_2}}=\cdots=m_{a_{i_\ell}}$, in particular, that $n_{a_{i_1}}\equiv n_{a_{i_\ell}}\pmod m$. Recalling that we chose $m=(a_k-a_1+1)!$, we see that \cref{lem:error-correct} implies that $n_{a_{i_1}},n_{{a_{i_1}}+1},\ldots, n_{a_{i_\ell}}$ is an AP. Pick $n,d\in \Z/mN\Z$ so that $n+id\equiv n_i\pmod{mN}$ for all ${a_{i_1}}\leq i\leq {a_{i_\ell}}$. We see that $(n+a_1d,\ldots,n+{a_k}d)$ is an $\bfa$-binomial pattern in $\phi$ since $n+a_jd=n_{a_j}$ for $i_1\leq j\leq i_\ell$. By our construction of $\phi$, this can only occur if $n_{a_{i_1}}=n_{{a_{i_1}}+1}=\cdots=n_{a_{i_\ell}}$.

Similarly, suppose we have the first type of $\bfa$-binomial pattern; that is, there is a pairing $f$ of $\bfa$ such that $\Phi(x+a_iy)=\Phi(x+a_{f(i)}y)$ for all $i\in[k]$. We apply \cref{claim:pairing} to analyze the structure of $f$.

In case (1), we have $1\leq i_1<i_2<i_3<i_4\leq k$ with $f(i_1)=i_3$ and $f(i_2)=i_4$. By an identical application of \cref{lem:error-correct} we find that $n_{a_{i_1}},n_{a_{i_1}+1},\ldots,n_{a_{i_4}}$ is an AP. Therefore $n_{a_{i_1}},n_{a_{i_2}},n_{a_{i_3}},n_{a_{i_4}}$ form an $(a_{i_1},a_{i_2},a_{i_3},a_{i_4})$-ABAB pattern in $\phi$. By our construction of $\phi$, this can only occur if $n_{a_{i_1}}=n_{a_{i_1}+1}=\cdots=n_{a_{i_4}}$.

In case (2), we similarly find that $n_{a_{i_1}},n_{a_{i_2}},n_{a_{i_3}},n_{a_{i_4}}$ form an asymmetric $(a_{i_1},a_{i_2},a_{i_3},a_{i_4})$-ABBA pattern in $\phi$ and thus get the same conclusion.

Finally in case (3), by the fact that $f(1)=k$, we see by \cref{lem:error-correct}, that $n_{a_1},n_{a_1+1},\ldots,n_{a_k}$ is an AP and thus $n_{a_1},n_{a_2},\ldots,n_{a_k}$ forms a symmetrically colored $\bfa$-AP in $\phi$, again getting the desired conclusion.

Therefore we have shown that if $x+a_1y,\ldots,x+a_ky\in\TT$ form an $\bfa$-binomial pattern in $\Phi$, at least two of the terms say $x+a_iy,x+a_{i'}y$ must lie in the same interval of length $1/(mN)$. The probability of this occurring is $\lesssim_{\bfa} 1/N$ since there are $O_k(1)$ choices for $i,i'$, a $1/mN$ probability for $x+a_{i'}y$ to lie in the same interval as $x+a_iy$, and $O_\bfa(1)$ choices for $x,y$ once $x+a_iy, x+a_{i'}y$ are fixed. This proves the desired property of $\Phi$.
\end{proof}

Now \cref{thm:general_reduction,thm:general_pattern} follow from the same proofs as \cref{thm:kap-cons-odd,thm:kap-reduction-even}, using \cref{lem:gen-a-ap-construction} in the place of \cref{lem:gen-k-ap-construction} and using \cref{lem:gen-a-ap-coloring} as the coloring input.

\appendix

\section{Equidistribution and convergence}
\label{sec:equi-conv}

Here we prove \cref{prop:conv}, showing that the various linear form statistics on $f_N(n) = F(n/N \bmod 1, n^{k-2}/N \bmod 1)$ converge to the associated linear form statistics on $F$ as $N \to \infty$. 
As this is a standard exercise on Weyl equidistribution, we will be somewhat sketchy here.

By a standard square differencing and degree reduction (analogous to Gauss sums), we have the following result, whose proof we omit. Here we use the standard notation $e(x) = e^{2\pi i x}$ for $x \in \RR$.

\begin{lemma}[Complete Weyl sums] \label{lem:weyl}
Let $P(x_1, \dots, x_s)$ be a fixed non-constant polynomial with integer coefficients. Then, as $N \to \infty$,
\[
\frac{1}{N^s} \sum_{n_1, \dots, n_s =1}^N e\paren{\frac{P(n_1, \dots, n_s)}{N}} \to 0.
\]
\end{lemma}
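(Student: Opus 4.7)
I would prove the lemma by induction on the total degree $d$ of $P$, via the standard iterated Weyl (square-differencing) argument that is hinted at in the paper's aside.

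For the base case $d=1$, write $P(x_1,\dots,x_s) = c_0 + \sum_i c_i x_i$. By non-constancy some $c_{i_0} \neq 0$; the exponential sum factors as a product of univariate complete sums, and the $i_0$-th factor $\sum_{n=1}^N e(c_{i_0} n/N)$ vanishes as soon as $N \nmid c_{i_0}$, i.e.\ for all $N > |c_{i_0}|$. So the whole sum is identically $0$ for $N$ large.

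For the inductive step ($d \geq 2$), choose a variable (WLOG $x_1$) with $\deg_{x_1} P \geq 1$, and apply Cauchy--Schwarz in the $n_1$-direction to obtain
\[
\bigl|T_N(P)\bigr|^2 \;\leq\; \EE_{h \in \Z/N\Z}\, \bigl|T_N(\Delta_h P)\bigr|,
\]
where $T_N(Q) := N^{-s}\sum_{n \in (\Z/N\Z)^s} e(Q(n)/N)$ and $\Delta_h P(n_1,\dots,n_s) := P(n_1+h,n_2,\dots,n_s)-P(n_1,n_2,\dots,n_s)$. The key algebraic point is that for every integer $h$, $\Delta_h P \in \Z[x_1,\dots,x_s]$ has total degree at most $d-1$: every monomial $c\,x_1^{a_1}\cdots x_s^{a_s}$ of $P$ contributes $c\bigl((x_1+h)^{a_1} - x_1^{a_1}\bigr)x_2^{a_2}\cdots x_s^{a_s}$, whose degree in $x_1$ drops by one while its degree in $x_2,\dots,x_s$ is unchanged. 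Thus for every $h$ such that $\Delta_h P$ remains non-constant modulo $N$, the inductive hypothesis gives $|T_N(\Delta_h P)| \to 0$.

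The main obstacle, and the reason the paper dismisses the proof with ``standard'', is ensuring that this convergence is \emph{uniform} in $h$: as $h$ varies over a set of size $N$, the coefficients of $\Delta_h P$ are polynomials in $h$ and hence may grow polynomially in $N$, so the inductive hypothesis as literally stated only gives convergence for each fixed $h$. The standard remedy is to strengthen the induction to a quantitative Weyl-type bound of the form $|T_N(Q)| \leq C_d(M)\, N^{-\rho_d}$ valid for all polynomials $Q$ of degree $\leq d$ with integer coefficients of absolute value at most $M$, where $C_d(M)$ depends only polynomially on $M$; such a bound then survives taking $M = O(N^d)$. Alternatively, one can iterate Weyl differencing in the same variable $x_1$ a total of $\deg_{x_1} P$ times, producing a sum whose integrand no longer depends on $x_1$ but only on $x_2,\dots,x_s$ and auxiliary parameters $h_1,\dots,h_{\deg_{x_1} P}$; reinterpreting this as a new complete Weyl sum and handling each original variable this way eventually reduces to a linear exponential sum, which vanishes by the base-case geometric-series argument.
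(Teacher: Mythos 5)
The paper states \cref{lem:weyl} but explicitly omits its proof, noting only that it follows ``by a standard square differencing and degree reduction.'' Your proposal reconstructs exactly the standard argument that the authors have in mind (van der Corput/Weyl differencing with induction), and you have correctly put your finger on the one genuinely non-automatic point: the inductive hypothesis for $\Delta_h P$ must hold uniformly as $h$ ranges over $[N]$, since the coefficients of $\Delta_h P$ grow polynomially in $N$.

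One caveat about your first proposed remedy: the claimed quantitative bound $\abs{T_N(Q)}\le C_d(M)N^{-\rho_d}$ with $C_d(M)$ growing polynomially in $M$ does not actually close the induction. Already for $Q(x)=Nx$ one has $T_N(Q)=1$, which forces $C_1(M)\gtrsim M^{\rho_1}$, and then plugging in $M=O(N^d)$ produces a factor $N^{(d-1)\rho_d}$ that can overwhelm the saving $N^{-\rho_d}$ for $d\ge2$. The correct quantitative statement must be keyed to arithmetic structure rather than raw size---for instance, tracking $\gcd$ of the leading coefficient with $N$ (or the content of the coefficient vector mod $N$). Your second, iterative remedy is the cleaner route: differencing $\deg_{x_1}P$ times in $x_1$, then $\deg_{x_2}$ times in $x_2$ of the resulting polynomial, and so on, one ends with a single \emph{multilinear} monomial $c\,h_1\cdots h_M$ with $c\ne 0$ an integer depending only on $P$. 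The final sum $\EE_{h_1,\ldots,h_M}e(ch_1\cdots h_M/N)$ equals $\PP[\,N\mid c h_1\cdots h_{M-1}\,]$, which tends to $0$ by an elementary gcd/divisor count; note that this last step is not just the base-case geometric series but a separate (easy) counting argument. Also, when choosing which variable to difference at each stage, one must difference the current polynomial's $x_i$-degree many times (not the original $\deg_{x_i}P$), since earlier differencing in another variable can lower these degrees; otherwise the result may vanish identically. With those two small adjustments, your sketch is a faithful rendition of the proof the paper alludes to.
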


Let us first illustrate the proof of $\Lambda_k(f_N) \to \wt\Lambda_k(F)$. 
We consider a multilinear generalization.
Suppose we have smooth $F_1, \dots, F_k \colon \TT^2 \to [0,1]$. For each $j = 1, \dots, k$ and $N$, define $f_{i,N} \colon \ZZ/N\ZZ \to [0,1]$ by $f_{i, N}(n) \coloneqq F_i(n/N, n^{k-2}/N)$. We will show that 
\begin{equation}\label{eq:conv-kap-multilinear}
\Lambda_k(f_{1,N}, \dots, f_{k, N}) \to \wt \Lambda_k(F_1, \dots, F_k), \quad \text{as } N \to \infty
\end{equation}
where  $\wt\Lambda_k(F_1,\ldots,F_k)$ is the natural multilinear generalization of $\wt\Lambda_k$ defined by
\[\wt\Lambda_k(F_1,\ldots,F_k) \coloneqq \EE_{(x_1, \dots, x_k, y_1, \dots, y_k) \in V} F_1(x_1, y_1) \cdots F_k(x_k, y_k)\]
where $V$ is the subset of $\TT^{2k}$ defined by all points $(x_1, \dots, x_k, y_1, \dots, y_k)$ satisfying
\[
x_2 - x_1 = \cdots = x_k - x_{k-1}
\]
and
\[
\sum_{i=0}^{k-1} (-1)^i\binom{k-1}{i} y_i = 0.
\]

To prove \cref{prop:conv}, it suffices to prove it assuming that $F_j$ is smooth, since we can approximate each $F_j$ from above and below by smooth $[0,1]$-valued functions with arbitrary small $L^1$ approximation error.

Consider the Fourier series expansion 
\[
F_j(x,y) = \sum_{r,s \in \ZZ} \wh{F_{j}}(r,s) e(rx + sy).
\]
Since $F_j$ is smooth, its Fourier coefficients decay faster than any polynomial, i.e., $\abss{\wh{F_j}(r,s)} \lesssim_A (1 + \abs{r} + \abs{s})^{-A}$ for every $A> 0$. 
So, due to absolute convergence, we can check \cref{eq:conv-kap-multilinear} term-by-term after expanding using the Fourier series. In other words, it suffices to check \cref{eq:conv-kap-multilinear} when each $F_j$ has the form $F_j(x,y) = e(r_jx + s_jy)$ for some $r_j, s_j \in \ZZ$.
In this case, we have
\begin{align*}
\text{LHS of \cref{eq:conv-kap-multilinear}}
&= \EE_{n_0, n_1 \in \ZZ/N\ZZ} \prod_{j=1}^k f_{j,N}(n_0 + jn_1)
\\
&= \EE_{n_0, n_1 \in \ZZ/N\ZZ} \prod_{j=1}^k F_j\paren{ \frac{n_0 + jn_1}{N}, \frac{(n_0 + jn_1)^{k-2}}{N}}
\\
&= 
\EE_{n_0, n_1 \in \ZZ/N\ZZ} 
e\paren{\frac{P(n_0, n_1)}{N}}
\end{align*}
where
\[
P(n_0, n_1) = \sum_{j=1}^k \paren{r_j(n_0 + jn_1) + s_j(n_0 + jn_1)^{k-2}}.
\]
If $P$ is the zero polynomial, then the LHS of \cref{eq:conv-kap-multilinear} is one.
Otherwise, by \cref{lem:weyl}, the LHS of \cref{eq:conv-kap-multilinear} converges to zero as $N \to \infty$.

Note that $P$ is the zero polynomial if and only if $\sum_{j=1}^k r_j=\sum_{j=1}^k jr_j=0$ and 
\begin{equation}
\label{eq:vandermonde-system}
\sum_{j=1}^k s_jj^t=0 \text{ for each }0\leq t\leq k-2.
\end{equation}
It is not hard to see that $s_j = (-1)^j \binom{k-1}{j-1}\lambda$ satisfies the system \cref{eq:vandermonde-system} for each $\lambda\in\R$. (See \cref{lem:a-binom-eq} for the proof of a more general identity.) Furthermore these are the only solutions to \cref{eq:vandermonde-system} since the matrix $(j^t)_{j\in[k], 0\leq t\leq k-2}$ is the Vandermonde matrix which is full rank.

On the other hand, 
\begin{align*}
\text{RHS of \cref{eq:conv-kap-multilinear}}
&= \EE_{x_0, x_1, y_1, \dots, y_k \in \TT \colon \sum_{j=1}^k (-1)^j \binom{k-1}{j-1} y_j = 0} \prod_{j=1}^k F(x_0 + jx_1, y_j)
\\
&= \EE_{x_0, x_1, y_1, \dots, y_k \in \TT \colon  \sum_{j=1}^k (-1)^j \binom{k-1}{j-1} y_j = 0} 
\prod_{j=1}^k e(r_j(x_0 + jx_1) + s_jy_j),
\end{align*}
which is equal to one if
\[\sum_{j=1}^k \paren{r_j(x_0+jx_1)+s_jy_j} = 0 \text{ for all } x_0, x_1, y_1, \dots, y_k \in \TT \text{ such that } \sum_{j=1}^k (-1)^j \binom{k-1}{j-1} y_j = 0 \]
and is equal to zero otherwise. The above is true if and only if there is some $\lambda \in \RR$ such that
\[
\sum_{j=1}^k r_j = 0,
\quad 
\sum_{j=1}^k j r_j = 0,
\text{ and } 
s_j = (-1)^j \binom{k-1}{j-1}\lambda \text{ for each } 1 \le j \le n,
\]
This condition is precisely equivalent to $P$ being the zero polynomial, meaning that \cref{eq:conv-kap-multilinear} holds when the $F_1, \dots, F_k$ are exponential phases. The general result follows from summing the terms of the Fourier series expansion of each $F_j$.

The proof for $\norm{f - \alpha}_{U^{k-2}}$ is similar but cumbersome to write out. Let us just illustrate the proof in the representative case of $k = 4$. 
It suffices to prove that, given Riemann integrable functions $F_{00}, F_{01}, F_{10}, F_{00} \colon \TT^2 \to [0,1]$, defining, for each $j \in \set{00,01,10,11}$, $f_{j,N} \colon \ZZ/N\ZZ \to [0,1]$ by $f_{j,N}(n) = F_{j}(n/N, n^2/N)$, and letting $F_j^1$ be the first marginal of $F_j$, defined by $F_j(x):=\int_\TT  F(x,y)\,\text dy$ we have, as $N \to \infty$,
\begin{multline} \label{eq:conv-u2-multlinear}
\EE_{n_0, n_1, n_2 \in \ZZ/N\ZZ} f_{00} (n_0) f_{10}(n_0 + n_1) f_{01}(n_0 + n_2) f_{11}(n_0 + n_1 + n_2) 
\\
\to 
\EE_{x_0,x_1,x_2 \in \TT} F_{00}^1(x_0) F_{10}^1(x_0+x_1) F_{01}^1(x_0+x_2) F_{11}^1(x_0+x_1+x_2).
\end{multline}
As earlier, by approximating with smooth functions, and then considering Fourier series, it suffices to prove this convergence when each $F_j$ is an exponential phase $F_j(x,y) = e(r_jx + s_jy)$, in which case
\begin{align*}
\text{LHS of \cref{eq:conv-u2-multlinear}} = \EE_{n_0, n_1, n_2} e\paren{\frac{P_1(n_0, n_1, n_2) + P_2(n_0, n_1, n_2)}{N}}
\end{align*}
where
\[
P_1(n_0, n_1, n_2) = r_{00} n_0 + r_{10}(n_0 + n_1) + r_{01} (n_0 + n_2) + r_{11} (n_0 + n_1 + n_2)
\]
and
\[
P_2(n_0, n_1, n_2) = s_{00} n_0^2 + s_{10}(n_0 + n_1)^2 + s_{01} (n_0 + n_2)^2 + s_{11} (n_0 + n_1 + n_2)^2
\]
Note that $P_2$ is identically zero if and only if $s_{00} = s_{10} = s_{01} = s_{11} = 0$.

If $(s_{00}, s_{10}, s_{01}, s_{11}) \ne (0,0,0,0)$, then by \cref{lem:weyl}, the LHS of \cref{eq:conv-u2-multlinear} converges to zero, but we also have $F_j^1$ being the zero function whenever $s_j \ne 0$. 
So \cref{eq:conv-u2-multlinear} holds in this case.

On the other hand, if $(s_{00}, s_{10}, s_{01}, s_{11}) =  (0,0,0,0)$, then only linear exponential phases remain, and it is also straightforward to check that \cref{eq:conv-u2-multlinear} holds. Basically, the limit is one if $P_1$ is identically zero, and zero otherwise.

This completes our sketch of the proof of \cref{prop:conv}. The remaining details are standard and straightforward.

\end{document}